\newtheorem{corollary}{Corollary}[section]
\newtheorem{lemma}[corollary]{Lemma}
\newtheorem{remark}[corollary]{Remark}
\newtheorem{theorem}[corollary]{Theorem}
\date{}
\begin{document}
\title{Homogenization of  Bingham Flow in  thin porous media}\maketitle

\vskip-30pt
 \centerline{Mar\'ia ANGUIANO\footnote{Departamento de An\'alisis Matem\'atico. Facultad de Matem\'aticas.
Universidad de Sevilla, 41012 Sevilla (Spain)
anguiano@us.es} and Renata BUNOIU\footnote{Université de Lorraine, CNRS, IECL, 57000 Metz (France)
renata.bunoiu@univ-lorraine.fr}}

%\author{ \center  Mar\'ia ANGUIANO\\
%Departamento de An\'alisis Matem\'atico. Facultad de Matem\'aticas.\\
%Universidad de Sevilla, P. O. Box 1160, 41080-Sevilla (Spain)\\
%anguiano@us.es\\}
%\medskip\author{ \center  Renata BUNOIU\\ Université de Lorraine, CNRS, IECL, \\ 57000 Metz (France)\\
% renata.bunoiu@univ-lorraine.fr\\}
%%\pagestyle{plain}

 \renewcommand{\abstractname} {\bf Abstract}
\begin{abstract} 
By using dimension reduction and homogenization techniques, 
we study the steady flow of an incompresible   viscoplastic Bingham fluid in a  thin porous medium.
A main feature of our study is the dependence of  the yield stress
of the Bingham fluid on the  small parameters  describing the geometry of the 
thin porous medium under consideration.
Three different problems are obtained in the limit when the small parameter $\varepsilon$ tends to zero, following the ratio
between the height $\varepsilon$ of the porous medium and the relative dimension $a_\varepsilon$ of its periodically distributed 
pores. We conclude with  the interpretation of these limit problems, which all preserve the nonlinear character of the flow.
\end{abstract}

 {\small \bf AMS classification numbers:} 76A05, 76A20, 76M50, 35B27.  
 
 {\small \bf Keywords:}  porous medium, thin domain, Bingham fluid 

\section{Introduction}

We study in this paper the steady incompressible flow of a Bingham fluid in a thin porous medium
containing an array of vertical cylindrical obstacles (the pores). 
The model of thin porous medium of thickness much smaller than the distance between the pores
was introduced in \cite{Zhengan}, where a stationary incompressible Navier-Stokes flow was studied.
Recently, the model of thin porous medium under consideration in this paper was introduced
in \cite{Fabricius}, where the flow of an incompressible viscous fluid described by the stationary
Navier-Stokes equations was studied by the multiscale expansion method, which is a formal but 
powerful tool to analyse homogenization problems. These results were rigorously proved 
in \cite{Anguiano_SuarezGrau2} using an adaptation introduced in \cite{Anguiano_SuarezGrau} of the 
periodic unfolding method 
 from \cite{CDG}. This adaptation consists of a combination of the unfolding
method with a rescaling in the height variable, in order to work with a domain of fixed height,
and to use monotonicity arguments to pass to the limit. In \cite{Anguiano_SuarezGrau}, in particular,
the flow of an incompressible stationary Stokes system with a nonlinear viscosity, being a power law, 
was studied. For non-stationary incompressible viscous flow in a thin porous medium see \cite{Anguiano},
where a non-stationary Stokes system is considered, and \cite{Anguiano2}, where a non-stationary non-newtonian 
Stokes system, where the viscosity obeyed the power law, is studied.
For the periodic unfolding method applied to the study
of problems stated in other type of thin periodic domains we refer for instance to
\cite{Griso} for crane type structures and to \cite{Griso-Mer}, \cite{Griso-Orlik} for thin layers with thin beams
structures, where elasticity problems are studied.

If $\Pi$ is a three-dimensional domain with smooth boundary $\partial \Pi$ and  $f =(f_1,f_2,f_3)$ 
are external given forces defined on 
 $\Pi$, then the velocity $u=(u_1,u_2,u_3)$ of a fluid and its pressure 
 $p$ satisfy the  equations of motion
 \begin{equation}\label{motion}
   - \sum _{j=1}^{3} \partial_{x_i} (\sigma(p,u))_{ij}
= f_{ i}\, \, \, \hbox {in}\,\,  \Pi, \quad 1 \leq i \leq 3,
\end{equation}
completed with the fluid's incompressibility condition 
$ {\textrm{div}}\, u =\sum _{i=1}^{3}\partial_{x_i}{u_i}=0 \hbox { in }
 \Pi$,
and the  no-slip boundary condition 
$  u = 0$ on the boundary   $\partial\Pi$.
What distinguishes different fluids is the expression of the stress tensor $\sigma$.
Newtonian fluids are the most encountered ones in real life and as typical examples one can mention the  water and the  air.
For  a newtonian fluid, the entries of the 
 stress tensor $\sigma(p,u)$ are  given by 
\begin{equation}\label{stress-newtonian}
 (\sigma(p,u))_{ij}= -p\delta_{ij} +  2 \mu (D(u))_{ij},\quad 1 \leq i,j \leq 3
 \end{equation}
where $\delta_{ij}$ is the Kronecker symbol, the real positive $\mu$ is the viscosity of the fluid and the entries 
of the strain tensor are 
$(D(u))_{ij}  =   (\partial_{x_j} u_{i} + \partial_{x_i} u_{j})/2.$
If  $f$ belongs to  $(L^2(\Pi))^3$ and the space $V$ is defined by 
$\displaystyle V \,\, = \,\, \{  v \in (H_0^1(\Pi))^3 \,\, | \,\, \mbox{div}\, v = 0 \},$
then $u$ and $p$ satisfying \eqref{motion} with \eqref{stress-newtonian} 
are such that (see for instance \cite{Girault_Raviart}):

{\it (Stokes) There is  a unique $u \in V$ and a unique (up to an additive real constant) 
$ p \in L^2(\Pi)$ such that (if 
$\displaystyle <\cdot, \cdot >$ is the dual pairing between $(H^{-1}(\Pi))^3$ and $(H_0^1(\Pi))^3$)
\begin{equation}\label{Stokes}
a(u,v) =l(v) - <\nabla p, v >, \quad \forall v \in (H_0^1(\Pi))^3,
\end{equation}}
with
$\displaystyle a(u,v) = 2 \mu\int_{\Pi}\hbox {D(u)} \colon \hbox {D(v)} dx$ and 
$\displaystyle l(v)= \int_{\Pi} f \cdot v dx$.

A fluid whose stress is not defined by relation  \eqref{stress-newtonian}  is called  a non-newtonian fluid.
There are several classes of non-newtonian fluids, 
as the power law, Carreau, Cross, Bingham fluids. It is on the study of the last type of fluid that we are 
interested in this paper. We refer to \cite{Cioran-book-fluids} for a review on  non-newtonian fluids.
For  a Bingham fluid, the nonlinear
 stress tensor is  defined by (see \cite{Duvaut_Lions})
 \begin{equation}\label{bingham}
(\sigma(p, u))_{ij}= -p \delta_{ij}  + 
2 \mu (D(u))_{ij}+ {\sqrt{2}} g \frac{(D(u))_{ij}}{\vert D(u) \vert}, 
\end{equation}
where $\displaystyle  \vert D (u) \vert ^2  =   D(u) \colon D(u)$ and the positive number $g$ represents the
yield stress of the fluid. If $g=0$, then \eqref{bingham} becomes \eqref{stress-newtonian}.
Viscoplastic Bingham fluids are quite often encountered in real life.
As examples  one can mention volcanic lava,  fresh concrete, the drilling mud, oils, clays and some paintings.
For $u_g$ and $p_g$ satisfying \eqref{motion} with \eqref{bingham},  
according to \cite{Duvaut_Lions}, one has the following result:

{\it (Bingham) There is  a unique $u_g \in V$ and a (non-unique)  $ p_g \in L^2(\Pi)/\mathbb{R}$ such that
\begin{equation}\label{g}
a(u_g,v-u_g) + j(v)-j(u_g)\, \geq \, l(v-u_g) - <\nabla p_g, v-u_g > , \,\, \forall v \in (H_0^1(\Pi))^3.
\end{equation}}
Here 
$\displaystyle a, l,  <\cdot, \cdot > $  are as before and 
$$j(v)= \sqrt{2} g\int_{\Pi} \vert D(v) \vert dx, \quad  \forall v \in (H_0^1(\Pi))^3.$$

If the yield stress of the Bingham fluid is of the form $g(\varepsilon)$, with $\varepsilon \in ]0,1[$ and such that 
$g (\varepsilon)$
tends to zero when  $\varepsilon$ tends to zero, then,   
 according to [\cite{Duvaut_Lions}, Chapter 6, Th\'eor\`eme 5.1.], the following result holds 

{\it When $\varepsilon$  tends to zero, one has for the solution $u_\varepsilon$ of problem \eqref{g} corresponding to $g(\varepsilon)$ 
the following convergence 
$$u_{\varepsilon} \rightharpoonup u \quad \mbox{ weakly in } V,$$
where $u$ is the solution of problem \eqref{Stokes}.}

This means that, in a fixed domain, the  nonlinear character of the Bingham flow is lost  in the limit (when the yield stress 
tends to zero), as it is expected.
A natural question that arises is the following:   If the yield stress  $g(\varepsilon)$ is as before and, moreover,
the domain $\Pi$ itself depends on the small parameter $\varepsilon$, what happens when  
 $\varepsilon$ tends to zero?
 The answer is that,  in the limit, the nonlinear character of the flow may be preserved. 
 For instance, if $\Pi_\varepsilon$  is a classical rigid porous medium, it was proven in   \cite{Lions_SanchezPalencia}
 with the asymptotic expansion method that, in a range of parameters, the nonlinear character of the Bingham flow is
 preserved in the homogenized problem, which is a nonlinear Darcy equation.
The convergence corresponding to the above mentioned result was  proven in \cite{Bourgeat_Mikelic} with the two-scale 
convergence method  and then recovered in 
  \cite{Bunoiu_Cardone_Perugia} with the periodic unfolding method. 
  The case of a  doubly periodic rigid porous medium was studied in \cite{Bunoiu_Cardone}, where a more involved nonlinear 
  Darcy equation is derived.
  Another class of domains for which the nonlinear character of the flow may be  preserved in the limit is those
  of thin domains. The case of a domain $\Pi_\varepsilon$ which is  thin  in one direction was addressed
  in \cite{Bunoiu_Kesavan-2D} and \cite{Bunoiu_Kesavan}. 
 We refer to \cite{Bunoiu_Gaudiello_Leopardi} for the asymptotic analysis of  a Bingham fluid  in a thin T-like shaped domain.
 In all these cases, a lower-dimensional Bingham-like law was exhibited in the limit. This law was already encountered in 
 engineering (see \cite{Liu-Mei}), but no rigurous mathematical justification was previously known.
 For the  shallow flow of a viscoplastic fluid we refer the reader to \cite{Fernandez}, \cite{Ionescu10}, \cite{Ionescu13bis} and \cite{Ionescu13}.
 
 In this paper we  study the asymptotic behavior of the flow of a viscoplastic Bingham fluid in 
 a thin porous medium.
 We refer the reader to the very recent paper \cite{Saramito} and the references therein for the  
 application of our 
study to problems issued from the real life applications. As  a first example one can mention 
the flow of the volcanic lava  through dense forests (see \cite{Lipman}).  Another important
application 
 is the flow of  fresh concrete spreading through networks of steel bars.
 
 The paper is organized as follows.  In Section 2. we state the problem: we define
in \eqref{Dominio1} the thin porous medium $\Omega_\varepsilon$ (see also Figure 1), of height $\varepsilon$ 
 and  relative dimension $a_\varepsilon$ of its periodically distributed 
pores. In $\Omega_\varepsilon$  we consider the flow
of a viscoplastic Bingham fluid with velocity $u_\varepsilon$ and pressure $p_\varepsilon$ verifying the nonlinear variational
inequality \eqref{VA_pressure}. In Section 3. we give some {\it a priori} estimates for the velocity and 
for the pressure obtained after the change of variables \eqref{dilatacion} and verifying \eqref{VA_pressure2},
and then for the velocity and for the pressure  defined in \eqref{uhat}.
In Section 4. by  passing to the limit $\varepsilon \rightarrow 0$, 
we prove the main convergence results of our paper, stated in 
Theorems \ref{CriticalCase},  \ref{SubCriticalCase} and \ref{SupercriticalCase}, respectively. 
Up to our knowledge, problems \eqref{limit_critical}, \eqref{limit_subcritical} and \eqref{limit_supercritical} 
are new in the mathematical literature. 
We  conclude  in Section 5. with the interpretation of these limit
problems, which all three preserve the  nonlinear character of the flow; both effects of a  nonlinear Darcy equation 
and a  lower dimensional Bingham-like law appear. The paper ends  with a list of References.
 
\section{Statement of the problem}
A periodic porous medium is defined by a domain $\omega$ and an associated microstructure,
or periodic cell $Y^{\prime}=[-1/2,1/2]^2$, which is made of two complementary parts: 
the fluid part $Y^{\prime}_{f}$, and the solid part $Y^{\prime}_{s}$ ($Y^{\prime}_f  \bigcup Y^{\prime}_s=Y^\prime$ 
and $Y^{\prime}_f  \bigcap Y^{\prime}_s=\varnothing$).
More precisely, we assume that $\omega$ is a smooth, bounded, connected set in $\mathbb{R}^2$, and that $Y_s^{\prime}$ 
is an open connected subset of $Y^\prime$ with a smooth boundary $\partial Y_s^\prime$, such that
$\overline Y_s^\prime$ is strictly included  in $Y^\prime$.

The microscale of a porous medium is a small positive number $a_{\varepsilon}$. The domain $\omega$ is covered by a regular mesh of size $a_{\varepsilon}$: for $k^{\prime}\in \mathbb{Z}^2$, each cell $Y^{\prime}_{k^{\prime},a_{\varepsilon}}=a_{\varepsilon}k^{\prime}+a_{\varepsilon}Y^{\prime}$ is divided in a fluid part $Y^{\prime}_{f_{k^{\prime}},a_{\varepsilon}}$ and a solid part $Y^{\prime}_{s_{k^{\prime}},a_{\varepsilon}}$, i.e. is similar to the unit cell $Y^{\prime}$ rescaled to size $a_{\varepsilon}$. We define $Y=Y^{\prime}\times (0,1)\subset \mathbb{R}^3$, which is divided in a fluid part $Y_{f}$ and a solid part $Y_{s}$, and consequently $Y_{k^{\prime},a_{\varepsilon}}=Y^{\prime}_{k^{\prime},a_{\varepsilon}}\times (0,1)\subset \mathbb{R}^3$, which is also divided in a fluid part $Y_{f_{k^{\prime}},a_{\varepsilon}}$ and a solid part $Y_{s_{k^{\prime}},a_{\varepsilon}}$.

We denote by $\tau(\overline Y'_{s_{k'},a_\varepsilon})$ the set of all translated images of
$\overline Y'_{s_{k'},a_\varepsilon}$. The set $\tau(\overline Y'_{s_{k'},a_\varepsilon})$ represents the solids in $\mathbb{R}^2$.
The fluid part of the bottom $\omega_{\varepsilon}\subset \mathbb{R}^2$ of the porous medium is defined by
$\omega_{\varepsilon}=\omega\backslash\bigcup_{k^{\prime}\in \mathcal{K}_{\varepsilon}} \overline Y^{\prime}_{s_{k^{\prime}},
{a_\varepsilon}}$, where 
$\mathcal{K}_{\varepsilon}=\{k^{\prime}\in \mathbb{Z}^2: Y^{\prime}_{k^{\prime}, {a_\varepsilon}} \cap \omega \neq \emptyset \}$.  
The whole fluid part $\Omega_{\varepsilon}\subset \mathbb{R}^3$ in the thin porous medium is defined by 
\begin{equation}\label{Dominio1}
\Omega_{\varepsilon}=\{  (x_1,x_2,x_3)\in \omega_{\varepsilon}\times \mathbb{R}: 0<x_3<\varepsilon \}.
\end{equation}
We make the assumption that the solids  $\displaystyle \tau(\overline Y'_{s_{k'},a_\varepsilon})$ do not intersect the boundary
$\partial \omega$. We define $Y^\varepsilon_{s_{k'},a_\varepsilon}=Y'_{s_{k'},a_\varepsilon}\times (0,\varepsilon)$. Denote by $S_\varepsilon$
the set of the solids contained in $\Omega_\varepsilon$. Then, $S_\varepsilon$ is a finite 
union of solids, i.e. $S_\varepsilon=\bigcup_{k^{\prime}\in \mathcal{K}_{\varepsilon}} \overline Y^\varepsilon_{s_{k'},a_\varepsilon}.$ 

\begin{figure}[ht]
\begin{center}
\includegraphics[width=10cm]{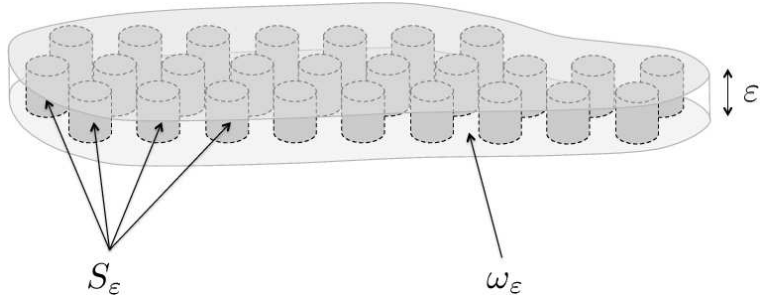}\quad \quad\quad 
\end{center}
\caption{Views of the domain $\Omega_\varepsilon$}
\label{figure_domain}
\end{figure}

We define $\widetilde{\Omega}_{\varepsilon}=\omega_{\varepsilon}\times (0,1),$ $\Omega=\omega\times (0,1),$ and $Q_\varepsilon=\omega\times (0,\varepsilon).$ We observe that $\widetilde{\Omega}_{\varepsilon}=\Omega\backslash \bigcup_{k^{\prime}\in \mathcal{K}_{\varepsilon}}
\overline Y_{s_{k^{\prime}}, {a_\varepsilon}},$ and we define $T_\varepsilon=\bigcup_{k^{\prime}\in \mathcal{K}_{\varepsilon}}
\overline Y_{s_{k^\prime}, a_\varepsilon}$ as the set of the solids contained in $\widetilde \Omega_\varepsilon$.

We denote by $:$ the full contraction of two matrices; for $A=(a_{i,j})_{1\leq i,j\leq 3}$ and $B=(b_{i,j})_{1\leq i,j\leq 3}$, we have $A:B=\sum_{i,j=1}^3 a_{ij}b_{ij}$.

In order to apply the unfolding method, we will need the following notation.  For $k'\in \mathbb{Z}^2$, we define $\kappa:\mathbb{R}^2\to \mathbb{Z}^2$ by 
\begin{equation}\label{kappa_fun}
\kappa(x^\prime)=k^\prime\iff x'\in Y^{\prime}_{k^{\prime},1}\,.
\end{equation}
Remark that $\kappa$ is well defined up to a set of zero measure in $\mathbb{R}^2$ (the set $\cup_{k^\prime\in \mathbb{Z}^2}\partial Y^{\prime}_{k^{\prime},1}$). Moreover, for every $a_\varepsilon>0$, we have 
$$\kappa\left(\frac{x^\prime}{a_\varepsilon}\right)=k^\prime \iff x^\prime\in Y^{\prime}_{k^{\prime},a_\varepsilon}.$$

We denote by $C$ a generic positive constant which can change from line to line.

The points $x\in\mathbb{R}^3$ will be decomposed as $x=(x^{\prime},x_3)$ with $x^{\prime}=(x_1,x_2)\in \mathbb{R}^2$, $x_3\in \mathbb{R}$. We also use the notation $x^{\prime}$ to denote a generic vector of $\mathbb{R}^2$.

In $\Omega_\varepsilon$ we consider the stationary flow of an incompressible Bingham fluid. As already seen in the Introduction, 
following Duvaut and Lions \cite{Duvaut_Lions}, the problem is formulated in terms of a variational inequality.

For a vectorial function $v=(v',v_3)$, we define
$$(D(v))_{i,j}={\frac{1}{2}}\left(\partial_{x_j} v_i+\partial_{x_i} v_j \right),\quad 1\leq i,j\leq 3,\quad |D(v)|^2=D(v):D(v).$$
We introduce the following spaces
$$V(\Omega_\varepsilon)=\{v\in (H_0^1(\Omega_\varepsilon))^3\,|\, {\rm div}\,v=0 \text{ in } \Omega_\varepsilon\}, \quad H(\Omega_\varepsilon)=\{v\in (L^2(\Omega_\varepsilon))^3\,|\, {\rm div}\,v=0 \text{ in } \Omega_\varepsilon, v\cdot n=0 \text{ on } \partial\Omega_\varepsilon\}.$$
For $u,v\in (H_0^1(\Omega_\varepsilon))^3 $, we introduce
$$a(u,v)=2\mu\int_{\Omega_\varepsilon}D(u):D(v)dx,\quad j(v)=\sqrt{2}g(\varepsilon)\int_{\Omega_\varepsilon}|D(v)|dx,\quad (u,v)_{\Omega_\varepsilon}=\int_{\Omega_\varepsilon}u\cdot vdx,$$
where the yield stress $g(\varepsilon)$ will be made precise in Section 3.1.
Let $f\in (L^2(\Omega))^3$ be given such that $f=(f',0)$. Let $f_\varepsilon\in (L^2(\Omega_\varepsilon))^3$ be defined by
$$f_\varepsilon(x)=f(x',x_3/ \varepsilon), \text{\ a.e. \ }x\in \Omega_\varepsilon.$$
The model of the flow is described by the following variational inequality:

Find $u_\varepsilon \in V(\Omega_\varepsilon)$ such that 
\begin{equation} \label{VA}
a(u_\varepsilon,v-u_\varepsilon)+j(v)-j(u_\varepsilon)
\ge 
(f_\varepsilon,v-u_\varepsilon)_ {\Omega_{\varepsilon}},\quad
 \forall v\in V(\Omega_\varepsilon).
\end{equation}

From Duvaut and Lions \cite{Duvaut_Lions}, we know that there exists a unique
$u_\varepsilon\in V(\Omega_\varepsilon)$ solution of problem (\ref{VA}).
Moreover, from Bourgeat and Mikeli\'c \cite{Bourgeat_Mikelic}, we know that if $p_\varepsilon$
is the pressure of the fluid in $\Omega_\varepsilon$, then problem (\ref{VA}) is equivalent to the following one:
Find $u_\varepsilon \in V(\Omega_\varepsilon)$   and $p_\varepsilon \in L_0^2(\Omega_\varepsilon)$  such that 
 \begin{equation}\label{VA_pressure}
a(u_\varepsilon,v-u_\varepsilon)+j(v)-j(u_\varepsilon)
\ge 
(f_\varepsilon,v-u_\varepsilon)_ {\Omega_{\varepsilon}}+(p_\varepsilon,{\rm div}\,(v-u_\varepsilon))_{\Omega_\varepsilon}, \quad
\forall v\in (H_0^1(\Omega_\varepsilon))^3. 
\end{equation}
Problem \eqref{VA_pressure}  admits a unique solution $u_\varepsilon \in V(\Omega_\varepsilon)$ and a (non) unique
solution $p_\varepsilon \in L_0^2(\Omega_\varepsilon)$, where  $L_0^2(\Omega_\varepsilon)$ denotes the space of 
functions belonging to $L^2(\Omega_\varepsilon)$ and of mean value zero. 

Our aim is to study the asymptotic behavior of $u_{\varepsilon}$ and $p_\varepsilon$ when $\varepsilon$ tends to zero. 
For this purpose, we first use the dilatation of the domain $\Omega_\varepsilon$ in the variable $x_3$, namely 
\begin{equation}\label{dilatacion}
y_3=\frac{x_3}{\varepsilon},
\end{equation}
in order to have the functions defined in an open set with fixed height, denoted $\widetilde{\Omega}_{\varepsilon}$.

Namely, we define $\tilde{u}_{\varepsilon}\in (H_0^{1}(\widetilde{\Omega}_{\varepsilon}))^3$, $\tilde p_\varepsilon \in L^2_0(\widetilde\Omega_\varepsilon)$ by $$\tilde{u}_{\varepsilon}(x^{\prime},y_3)=u_{\varepsilon}(x^{\prime},\varepsilon y_3), \quad \tilde{p}_{\varepsilon}(x^{\prime},y_3)=p_{\varepsilon}(x^{\prime},\varepsilon y_3)\text{\ \ } a.e.\text{\ } (x^{\prime},y_3)\in \widetilde{\Omega}_{\varepsilon}.$$
Let us introduce some notation which will be useful in the following. For a vectorial function $v=(v',v_3)$ and a scalar function $w$, we will denote $\mathbb{D}_{x^\prime}\left[v\right]=\frac{1}{2}(D_{x^\prime}v+D_{x^\prime}^t v)$ and $\partial_{y_3}\left[v\right]=\frac{1}{2}(\partial_{y_3}v+\partial_{y_3}^t v)$, where we denote $\partial_{y_3}=(0,0,\frac{\partial}{\partial y_3})^t$. 
Moreover, associated to the change of variables (\ref{dilatacion}), we introduce the operators: 
$D_{\varepsilon}$, $\mathbb{D}_\varepsilon$, ${\rm div}_{\varepsilon}$ and $\nabla_\varepsilon$, defined by
\begin{equation*}
(D_{\varepsilon}v)_{i,j}=\partial_{x_j}v_i\text{\ for \ }i=1,2,3,\ j=1,2,\quad (D_{\varepsilon}v)_{i,3}=\frac{1}{\varepsilon}\partial_{y_3}v_i\text{\ for \ }i=1,2,3,
\end{equation*}
\begin{equation*}
\mathbb{D}_\varepsilon\left[v\right]=\frac{1}{2}\left(D_\varepsilon v+D^t_\varepsilon v \right), \quad |\mathbb{D}_{\varepsilon}\left[v\right]|^2=\mathbb{D}_\varepsilon\left[v\right]:\mathbb{D}_\varepsilon\left[v\right], \quad {\rm div}_{\varepsilon}v={\rm div}_{x^{\prime}}v^{\prime}+\frac{1}{\varepsilon}\partial_{y_3}v_3, \quad \nabla_{\varepsilon}w=(\nabla_{x^{\prime}}w,\frac{1}{\varepsilon}\partial_{y_3}w)^t.
\end{equation*}
We introduce the following spaces
$$V(\widetilde\Omega_\varepsilon)=\{\tilde v\in (H_0^1(\widetilde\Omega_\varepsilon))^3\,|\, {\rm div}_\varepsilon\tilde v=0 \text{ in } \widetilde\Omega_\varepsilon\},\quad H(\widetilde\Omega_\varepsilon)=\{\tilde v\in (L^2(\widetilde \Omega_\varepsilon))^3\,|\, {\rm div}_\varepsilon\tilde v=0 \text{ in } \widetilde\Omega_\varepsilon, \tilde v\cdot n=0 \text{ on } \partial\widetilde \Omega_\varepsilon\}.$$
For $\tilde u,\tilde v\in V(\widetilde\Omega_\varepsilon)$, we introduce
$$a_\varepsilon(\tilde u,\tilde v)=2\mu\int_{\widetilde\Omega_\varepsilon}\mathbb{D}_\varepsilon\left[\tilde u\right]:\mathbb{D}_\varepsilon\left[\tilde v\right]dx'dy_3,\quad j_{\varepsilon}(\tilde v)=\sqrt{2}g(\varepsilon)\int_{\widetilde\Omega_\varepsilon}|\mathbb{D}_{\varepsilon}[\tilde v]|dx'dy_3,\quad (\tilde u,\tilde v)_{\widetilde\Omega_\varepsilon}=\int_{\widetilde\Omega_\varepsilon}\tilde u\cdot \tilde vdx'dy_3.$$
Using the transformation (\ref{dilatacion}), the variational inequality (\ref{VA}) can be rewritten as: 

Find $ \tilde u_\varepsilon \in V(\widetilde\Omega_\varepsilon)$ such that 
\begin{equation} \label{VA2}
a_\varepsilon(\tilde u_\varepsilon,\tilde v-\tilde u_\varepsilon)+j_\varepsilon(\tilde v)-j_\varepsilon(\tilde u_\varepsilon)
\ge 
(f,\tilde v-\tilde u_\varepsilon)_ {\widetilde\Omega_{\varepsilon}},\quad
 \forall \tilde v\in V(\widetilde\Omega_\varepsilon), 
\end{equation}
and (\ref{VA_pressure}) can be rewritten as:

Find $\tilde u_\varepsilon \in V(\widetilde \Omega_\varepsilon)$   and   $\tilde p_\varepsilon \in L_0^2(\widetilde \Omega_\varepsilon)$ such that 
 \begin{equation}\label{VA_pressure2}
a_\varepsilon(\tilde u_\varepsilon,\tilde v-\tilde u_\varepsilon)+j_\varepsilon(\tilde v)-j_\varepsilon(\tilde u_\varepsilon)
\ge 
(f,\tilde v-\tilde u_\varepsilon)_ {\widetilde\Omega_{\varepsilon}}+(\tilde p_\varepsilon,{\rm div}_{\varepsilon}(\tilde v-\tilde u_\varepsilon))_{\widetilde\Omega_\varepsilon}, \quad \forall \tilde v\in (H_0^1(\widetilde \Omega_\varepsilon))^3.
\end{equation}
Our goal now  is to describe the asymptotic behavior of this new sequence $(\tilde{u}_{\varepsilon}$, $\tilde{p}_{\varepsilon})$. 

\section{A Priori Estimates}
We start by  obtaining  some {\it a priori} estimates for $\tilde{u}_{\varepsilon}$.
\begin{lemma}\label{Lemma_estimate}
There exists a constant $C$ independent of $\varepsilon$, such that if 
$\tilde{u}_{\varepsilon}\in (H_0^{1}(\widetilde{\Omega}_{\varepsilon}))^3$ is the solution of problem (\ref{VA2}), one has
\begin{itemize}
\item[i)] if $a_{\varepsilon}\approx \varepsilon$, with $a_\varepsilon/\varepsilon\to \lambda$, $0<\lambda<+\infty$,
or $a_\varepsilon\ll \varepsilon$, then 
\begin{equation}\label{a1}
\left\Vert \tilde{u}_{\varepsilon}\right\Vert_{(L^2(\widetilde{\Omega}_{\varepsilon}))^3}\leq 
\frac{C}{\mu}a_{\varepsilon}^2,\quad \left\Vert \mathbb{D}_{\varepsilon}\left[\tilde{u}_{\varepsilon}\right]\right\Vert_{(L^2(\widetilde{\Omega}_{\varepsilon}))^{{3\times3}}}\leq 
\frac{C}{\mu}a_{\varepsilon}, \quad \left\Vert D_{\varepsilon}\tilde{u}_{\varepsilon}\right\Vert_{(L^2(\widetilde{\Omega}_{\varepsilon}))^{{3\times3}}}\leq 
\frac{C}{\mu}a_{\varepsilon},
\end{equation}
\item[ii)] if $a_\varepsilon\gg \varepsilon$, then 
\begin{equation}\label{a2}
\left\Vert \tilde{u}_{\varepsilon}\right\Vert_{(L^2(\widetilde{\Omega}_{\varepsilon}))^3}\leq 
\frac{C}{\mu}\varepsilon^2,\quad \left\Vert \mathbb{D}_{\varepsilon}\left[\tilde{u}_{\varepsilon}\right]\right\Vert_{(L^2(\widetilde{\Omega}_{\varepsilon}))^{{3\times3}}}\leq \frac{C}{\mu}\varepsilon, \quad \left\Vert D_{\varepsilon}\tilde{u}_{\varepsilon}\right\Vert_{(L^2(\widetilde{\Omega}_{\varepsilon}))^{{3\times3}}}\leq \frac{C}{\mu}\varepsilon.
\end{equation}
\end{itemize}
\end{lemma}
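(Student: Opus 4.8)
The plan is to extract an energy bound from the variational inequality \eqref{VA2} using the trivial test function, and then to upgrade it to the three stated estimates by combining a Poincar\'e inequality adapted to the thin porous geometry with the standard identity relating the full gradient to the symmetric gradient for divergence-free fields.

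The first step is to take $\tilde v=0$ in \eqref{VA2}, which is legitimate since $0\in V(\widetilde\Omega_\varepsilon)$ and $j_\varepsilon(0)=0$; after rearranging, this gives $a_\varepsilon(\tilde u_\varepsilon,\tilde u_\varepsilon)+j_\varepsilon(\tilde u_\varepsilon)\le(f,\tilde u_\varepsilon)_{\widetilde\Omega_\varepsilon}$. Since $j_\varepsilon(\tilde u_\varepsilon)\ge0$ and $a_\varepsilon(\tilde u_\varepsilon,\tilde u_\varepsilon)=2\mu\,\|\mathbb{D}_\varepsilon[\tilde u_\varepsilon]\|^2_{(L^2(\widetilde\Omega_\varepsilon))^{3\times3}}$, dropping the plasticity term, using Cauchy--Schwarz, and bounding $\|f\|_{(L^2(\widetilde\Omega_\varepsilon))^3}\le\|f\|_{(L^2(\Omega))^3}$ (as $\widetilde\Omega_\varepsilon\subset\Omega$) yields
$$
2\mu\,\|\mathbb{D}_\varepsilon[\tilde u_\varepsilon]\|^2_{(L^2(\widetilde\Omega_\varepsilon))^{3\times3}}\le C\,\|\tilde u_\varepsilon\|_{(L^2(\widetilde\Omega_\varepsilon))^3}.
$$

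Next, I would invoke the Poincar\'e inequality in $(H^1_0(\widetilde\Omega_\varepsilon))^3$ appropriate to this geometry: on the one hand, because $\tilde v$ vanishes on the $a_\varepsilon$-periodic solid boundaries, scaling the Poincar\'e inequality on the reference perforated cell $Y'_f$ and summing over cells gives $\|\tilde v\|_{(L^2(\widetilde\Omega_\varepsilon))^3}\le Ca_\varepsilon\,\|D_{x'}\tilde v\|_{(L^2(\widetilde\Omega_\varepsilon))^{3\times2}}$; on the other hand, since $\tilde v$ also vanishes on $\{y_3=0,1\}$, the one-dimensional Poincar\'e inequality in $y_3$ gives $\|\tilde v\|_{(L^2(\widetilde\Omega_\varepsilon))^3}\le C\varepsilon\,\|\frac{1}{\varepsilon}\partial_{y_3}\tilde v\|_{(L^2(\widetilde\Omega_\varepsilon))^3}$. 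As both right-hand sides are controlled by $\|D_\varepsilon\tilde v\|_{(L^2(\widetilde\Omega_\varepsilon))^{3\times3}}$, this yields $\|\tilde v\|_{(L^2(\widetilde\Omega_\varepsilon))^3}\le C\min(a_\varepsilon,\varepsilon)\,\|D_\varepsilon\tilde v\|_{(L^2(\widetilde\Omega_\varepsilon))^{3\times3}}$. Moreover, since ${\rm div}_\varepsilon\tilde u_\varepsilon=0$, undoing the dilatation $x_3=\varepsilon y_3$ identifies $\tilde u_\varepsilon$ with a genuinely divergence-free element of $(H^1_0(\Omega_\varepsilon))^3$, for which $\int|Du|^2=2\int|\mathbb{D}[u]|^2$; transported back, this reads $\|D_\varepsilon\tilde u_\varepsilon\|_{(L^2(\widetilde\Omega_\varepsilon))^{3\times3}}=\sqrt2\,\|\mathbb{D}_\varepsilon[\tilde u_\varepsilon]\|_{(L^2(\widetilde\Omega_\varepsilon))^{3\times3}}$.

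Finally, inserting $\|\tilde u_\varepsilon\|\le C\min(a_\varepsilon,\varepsilon)\|D_\varepsilon\tilde u_\varepsilon\|\le C\min(a_\varepsilon,\varepsilon)\|\mathbb{D}_\varepsilon[\tilde u_\varepsilon]\|$ into the energy bound and dividing through by $\|\mathbb{D}_\varepsilon[\tilde u_\varepsilon]\|$ (the claim being trivial if this vanishes) gives $\|\mathbb{D}_\varepsilon[\tilde u_\varepsilon]\|\le\frac{C}{\mu}\min(a_\varepsilon,\varepsilon)$; then $\|D_\varepsilon\tilde u_\varepsilon\|\le\frac{C}{\mu}\min(a_\varepsilon,\varepsilon)$, and re-entering the Poincar\'e inequality once more gives $\|\tilde u_\varepsilon\|\le\frac{C}{\mu}\min(a_\varepsilon,\varepsilon)^2$. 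Since $\min(a_\varepsilon,\varepsilon)=a_\varepsilon$ in case i) and $\min(a_\varepsilon,\varepsilon)=\varepsilon$ in case ii), these are precisely \eqref{a1} and \eqref{a2}. I expect the only genuinely delicate ingredient to be the scaled Poincar\'e inequality, and specifically the $\varepsilon$-uniformity of its constant in the perforated thin domain, including the treatment of cells meeting $\partial\omega$; the remaining steps are the routine manipulation of a Bingham variational inequality.
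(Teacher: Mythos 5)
Your argument is correct and follows essentially the same route as the paper: an energy bound from testing \eqref{VA2} with $\tilde v=0$ (the paper also tests with $2\tilde u_\varepsilon$, but only the inequality you use is needed), followed by the $\varepsilon$-uniform Poincar\'e estimate in the thin perforated domain --- which the paper simply quotes from Remark 4.3 of \cite{Anguiano_SuarezGrau} and you re-derive by cell-wise scaling plus the vertical Poincar\'e inequality --- and Korn's inequality in $H_0^1$, which you obtain with the domain-independent constant $\sqrt{2}$ via the divergence-free identity after undoing the dilatation. The one point you leave implicit, the uniformity of the cell-wise Poincar\'e constant for cells meeting $\partial\omega$, is precisely what the cited remark covers (using the assumption that the solids do not intersect $\partial\omega$), so your proof matches the paper's modulo making that auxiliary estimate self-contained.
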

\begin{proof}
Setting successively $\tilde v=2\tilde u_\varepsilon$ and $\tilde v=0$  in (\ref{VA2}), we have
\begin{eqnarray}\label{d1}
2\mu\int_{\widetilde{\Omega}_{\varepsilon}}\mathbb{D}_{\varepsilon}\left[\tilde{u}_{\varepsilon}\right]:\mathbb{D}_{\varepsilon}\left[\tilde{u}_{\varepsilon}\right]dx^{\prime}dy_3+\sqrt{2}g(\varepsilon)\int_{\widetilde{\Omega}_{\varepsilon}}|\mathbb{D}_{\varepsilon}[\tilde u_\varepsilon]|dx'dy_3=\int_{\widetilde{\Omega}_{\varepsilon}}f\cdot\tilde{u}_{\varepsilon}\,dx^{\prime}dy_3.
\end{eqnarray}
Using Cauchy-Schwarz's inequality and the assumption of $f$, we obtain that 
\begin{eqnarray*}
\int_{\widetilde{\Omega}_{\varepsilon}}f\cdot\tilde{u}_{\varepsilon}\,dx^{\prime}dy_3\leq C \left\Vert\tilde{u}_{\varepsilon} \right\Vert_{(L^2(\widetilde{\Omega}_{\varepsilon}))^3},
\end{eqnarray*}
and taking into account that $\int_{\widetilde{\Omega}_{\varepsilon}}|\mathbb{D}_{\varepsilon}[\tilde u_\varepsilon]|dx'dy_3\ge 0,$
by (\ref{d1}), we have
\begin{equation*}\label{d2}
\left\Vert \mathbb{D}_{\varepsilon}\left[\tilde{u}_{\varepsilon}\right]\right\Vert_{(L^2(\widetilde{\Omega}_{\varepsilon}))^{{3\times3}}}^2\leq 
\frac{C}{\mu} \left\Vert\tilde{u}_{\varepsilon} \right\Vert_{(L^2(\widetilde{\Omega}_{\varepsilon}))^3}.
\end{equation*}
For the cases $a_\varepsilon \approx \varepsilon$ or $a_\varepsilon \ll \varepsilon$, taking into account Remark 4.3(i) in \cite{Anguiano_SuarezGrau}, we obtain the second estimate in (\ref{a1}), and, consequently, from classical Korn's inequality we obtain the last estimate in (\ref{a1}). Now, from the second estimate in (\ref{a1}) and Remark 4.3(i) in \cite{Anguiano_SuarezGrau}, we deduce the first estimate in (\ref{a1}). For the case $a_\varepsilon \gg \varepsilon$, proceeding similarly with Remark 4.3(ii) in \cite{Anguiano_SuarezGrau}, we obtain the desired result.
\end{proof}

\subsection{The extension of $(\tilde u_\varepsilon,\tilde p_\varepsilon)$ to the whole domain $\Omega$}
We extend the velocity $\tilde u_\varepsilon$ by zero to the $\Omega\backslash \widetilde \Omega_\varepsilon$ and 
denote the extension by the same symbol. Obviously,  estimates (\ref{a1})-(\ref{a2}) remain valid and the extension is divergence free too.

We study in the sequel the following  cases for the value of  yield stress  $g (\varepsilon)$:
\begin{itemize}
\item[i)] if $a_{\varepsilon}\approx \varepsilon$, with $a_\varepsilon/\varepsilon\to \lambda$, $0<\lambda<+\infty$,
or $a_\varepsilon\ll \varepsilon$, then $\displaystyle g(\varepsilon)=g\,a_\varepsilon,$
\item[ii)] if $a_\varepsilon\gg \varepsilon$, then $\displaystyle g (\varepsilon)=g\,\varepsilon.$
\end{itemize}
These choices are the most challenging ones and they answer to the question adressed in the paper,
namely they all preserve in the limit the nonlinear character of the flow.

In order to extend the pressure to the whole domain $\Omega$, the mapping $R^\varepsilon$ 
(defined in Lemma 4.5 in \cite{Anguiano_SuarezGrau} as $R_2^\varepsilon$) allows us to extend 
the pressure $p_\varepsilon$ to $Q_\varepsilon$ by introducing $F_\varepsilon$ in $(H^{-1}(Q_\varepsilon))^3$:
\begin{equation}\label{F}
\left\langle F_{\varepsilon}, w\right\rangle_{Q_\varepsilon}=\left\langle \nabla{p}_{\varepsilon}, R^{\varepsilon} w\right\rangle_{{\Omega}_{\varepsilon}}, \text{\ for any \ } w\in (H_0^{1}(Q_\varepsilon))^3.
\end{equation}
Setting succesively $ v=  u_\varepsilon + R^{\varepsilon} w$ and $v= u_\varepsilon - R^{\varepsilon} w $ in (\ref{VA_pressure}) we get the inequality
\begin{equation}\label{inequality}
|\left\langle F_{\varepsilon}, w\right\rangle_{Q_\varepsilon}|\leq| a( u_\varepsilon, R^{\varepsilon} w)|+|( f_\varepsilon, R^{\varepsilon} w)_{{\Omega}_{\varepsilon}}|+j( R^{\varepsilon}w).
\end{equation}
Moreover, if ${\rm div}\,w=0$ then
$\left\langle F_{\varepsilon}, w\right\rangle_{Q_\varepsilon}=0,$
and the DeRham Theorem gives the existence of $P_\varepsilon$ in $L_0^2(Q_\varepsilon)$ with $F_\varepsilon=\nabla P_\varepsilon$.

Using the change of variables (\ref{dilatacion}), we get for any $\tilde w\in (H_0^1(\Omega))^3$ where $\tilde w(x',y_3)=w(x',\varepsilon y_3)$, 
$$\left\langle \nabla_\varepsilon \tilde P_\varepsilon, \tilde w\right\rangle_{\Omega}=-\int_{\Omega}\tilde P_\varepsilon\, {\rm div}_{\varepsilon}\tilde w\,dx'dy_3=-\varepsilon^{-1}\int_{Q_\varepsilon} P_\varepsilon\, {\rm div}\, w\,dx=\varepsilon^{-1}\left\langle \nabla  P_\varepsilon,  w\right\rangle_{Q_\varepsilon}.$$
Then, using the identification (\ref{F}) of $F_\varepsilon$ and the inequality (\ref{inequality}),
\begin{equation*}\label{inequality2}
|\left\langle \nabla_\varepsilon \tilde P_\varepsilon, \tilde w\right\rangle_{\Omega}|\leq\varepsilon^{-1}\left(| a( u_\varepsilon, R^{\varepsilon} w)|+|( f_\varepsilon, R^{\varepsilon} w)_{{\Omega}_{\varepsilon}}|+j( R^{\varepsilon}w)\right).
\end{equation*}
and applying the change of variables (\ref{dilatacion}),
\begin{equation}\label{inequality_duality}
|\left\langle \nabla_\varepsilon \tilde P_\varepsilon,\tilde w\right\rangle_{\Omega}|\leq| a_\varepsilon(\tilde u_\varepsilon,\tilde R^{\varepsilon}\tilde w)|+|( f,\tilde R^{\varepsilon}\tilde w)_{\widetilde{\Omega}_{\varepsilon}}|+j_\varepsilon(\tilde R^{\varepsilon}\tilde w),
\end{equation}
where $\tilde R^{\varepsilon}\tilde w=R^\varepsilon w$ for any $\tilde w\in (H_0^1(\Omega))^3$.

Now, we estimate the right-hand side of (\ref{inequality_duality}) using the estimates given in Lemma 4.6 in \cite{Anguiano_SuarezGrau}.

\begin{lemma}
There exists a constant $C$ independent of $\varepsilon$, such that the extension $\tilde{P}_{\varepsilon}\in L_0^{2}(\Omega)$ of the pressure $ \tilde{p}_{\varepsilon}$ satisfies
\begin{equation}\label{esti_P}
\left\Vert \tilde{P}_{\varepsilon} \right\Vert_{L_0^{2}(\Omega)}\leq C.
\end{equation}
\end{lemma}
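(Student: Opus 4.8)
The goal is to bound the extended pressure $\tilde P_\varepsilon$ in $L^2_0(\Omega)$ uniformly in $\varepsilon$. The plan is to start from the duality inequality \eqref{inequality_duality} and estimate each of the three terms on the right-hand side in terms of $\|\tilde w\|$ in an appropriate norm, using the a priori estimates of Lemma~\ref{Lemma_estimate} together with the properties of the restriction operator $\tilde R^\varepsilon$ recalled from Lemma~4.6 in \cite{Anguiano_SuarezGrau}. The relevant properties of $R^\varepsilon$ (equivalently $\tilde R^\varepsilon$) are the bounds of the form $\|\tilde R^\varepsilon \tilde w\|_{L^2} \le C\|\tilde w\|_{L^2}$ and $\|\mathbb D_\varepsilon[\tilde R^\varepsilon\tilde w]\|_{L^2} \le C(\|\mathbb D_\varepsilon[\tilde w]\|_{L^2} + (\text{scale factor})^{-1}\|\tilde w\|_{L^2})$, where the scale factor is $a_\varepsilon$ in case~(i) and $\varepsilon$ in case~(ii); these are exactly the estimates designed to make the argument close.

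First I would treat case~(i), $a_\varepsilon \approx \varepsilon$ or $a_\varepsilon \ll \varepsilon$, where $g(\varepsilon) = g\,a_\varepsilon$. For the bilinear term, $|a_\varepsilon(\tilde u_\varepsilon, \tilde R^\varepsilon \tilde w)| \le 2\mu \|\mathbb D_\varepsilon[\tilde u_\varepsilon]\|_{L^2}\|\mathbb D_\varepsilon[\tilde R^\varepsilon\tilde w]\|_{L^2}$; inserting the second estimate of \eqref{a1}, namely $\|\mathbb D_\varepsilon[\tilde u_\varepsilon]\|_{L^2} \le (C/\mu) a_\varepsilon$, and the operator bound for $\tilde R^\varepsilon$, the factor $a_\varepsilon$ cancels the $a_\varepsilon^{-1}$ coming from the restriction operator and one is left with $C(\|\mathbb D_\varepsilon[\tilde w]\|_{L^2} + \|\tilde w\|_{L^2})$, hence $\le C\|\tilde w\|_{H^1}$-type control. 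For the force term, $|(f, \tilde R^\varepsilon \tilde w)_{\widetilde\Omega_\varepsilon}| \le \|f\|_{L^2}\|\tilde R^\varepsilon\tilde w\|_{L^2} \le C\|\tilde w\|_{L^2}$ by the $L^2$ bound on $\tilde R^\varepsilon$ and $f \in (L^2(\Omega))^3$. For the Bingham term, $j_\varepsilon(\tilde R^\varepsilon \tilde w) = \sqrt 2\, g\, a_\varepsilon \int_{\widetilde\Omega_\varepsilon} |\mathbb D_\varepsilon[\tilde R^\varepsilon\tilde w]|\,dx'dy_3 \le \sqrt2\, g\, a_\varepsilon\, |\widetilde\Omega_\varepsilon|^{1/2}\|\mathbb D_\varepsilon[\tilde R^\varepsilon\tilde w]\|_{L^2}$; again the $a_\varepsilon$ compensates the $a_\varepsilon^{-1}$ in the restriction bound, and since $|\widetilde\Omega_\varepsilon| \le |\Omega|$ is bounded, this is $\le C(\|\mathbb D_\varepsilon[\tilde w]\|_{L^2} + \|\tilde w\|_{L^2})$. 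Collecting the three bounds, $|\langle \nabla_\varepsilon \tilde P_\varepsilon, \tilde w\rangle_\Omega| \le C\|\tilde w\|_{(H^1_0(\Omega))^3}$ for all $\tilde w$. Case~(ii), $a_\varepsilon \gg \varepsilon$ with $g(\varepsilon) = g\,\varepsilon$, is handled identically, using \eqref{a2} in place of \eqref{a1} and the scale factor $\varepsilon$ in the corresponding restriction bound (Remark/Lemma 4.6(ii) in \cite{Anguiano_SuarezGrau}).

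It remains to convert the uniform bound on $\nabla_\varepsilon \tilde P_\varepsilon$ in $(H^{-1}(\Omega))^3$ into the bound \eqref{esti_P} on $\tilde P_\varepsilon$ in $L^2_0(\Omega)$. Since $\tilde P_\varepsilon$ has zero mean on $\Omega$, this follows from the classical result (Ne\v cas/Bogovski\u i inequality on the fixed bounded Lipschitz domain $\Omega$) that $\|q\|_{L^2_0(\Omega)} \le C(\Omega)\|\nabla q\|_{(H^{-1}(\Omega))^3}$; the point is that after the dilation the gradient appearing is $\nabla_\varepsilon$, so one checks that $\|\nabla q\|_{H^{-1}} \le \|\nabla_\varepsilon q\|_{H^{-1}}$ (the $x_3$-component of $\nabla_\varepsilon$ carries an extra $\varepsilon^{-1} \ge 1$, so dropping it only decreases the norm), which gives $\|\tilde P_\varepsilon\|_{L^2_0(\Omega)} \le C\|\nabla_\varepsilon \tilde P_\varepsilon\|_{(H^{-1}(\Omega))^3} \le C$. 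The main obstacle, and the step requiring care, is the bookkeeping of the $\varepsilon$- and $a_\varepsilon$-powers in the restriction-operator estimates: one must be sure that the scaling of $\tilde R^\varepsilon$ in Lemma~4.6 of \cite{Anguiano_SuarezGrau} matches the scaling $a_\varepsilon$ (resp.\ $\varepsilon$) chosen for $g(\varepsilon)$ and the one in the velocity estimates, so that every negative power is exactly absorbed and nothing blows up; everything else is routine Cauchy--Schwarz plus the fixed-domain functional-analytic lemma.
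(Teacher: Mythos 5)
Your plan reproduces the paper's own proof essentially step for step: bound the three terms on the right-hand side of \eqref{inequality_duality} via Cauchy--Schwarz, the velocity estimates \eqref{a1} (resp.\ \eqref{a2}) and the $\tilde R^{\varepsilon}$ estimates from Lemma 4.6 (resp.\ Lemma 4.5) of \cite{Anguiano_SuarezGrau}, matched to the choice $g(\varepsilon)=g\,a_\varepsilon$ (resp.\ $g\,\varepsilon$); then convert $\left\Vert\nabla_{\varepsilon}\tilde P_{\varepsilon}\right\Vert_{(H^{-1}(\Omega))^3}\leq C$ into \eqref{esti_P} on the fixed domain $\Omega$ via the Ne\v{c}as-type inequality (Girault--Raviart), using $\left\Vert\nabla\tilde P_{\varepsilon}\right\Vert_{(H^{-1}(\Omega))^3}\leq\left\Vert\nabla_{\varepsilon}\tilde P_{\varepsilon}\right\Vert_{(H^{-1}(\Omega))^3}$, exactly as the paper does. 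One intermediate line is written too loosely, and it is precisely the point you yourself flag at the end: after cancelling $a_\varepsilon\cdot a_\varepsilon^{-1}$ you say one is left with $C\left(\left\Vert\mathbb{D}_{\varepsilon}[\tilde w]\right\Vert_{L^2}+\left\Vert\tilde w\right\Vert_{L^2}\right)$, ``hence $C\left\Vert\tilde w\right\Vert_{H^1}$-type control''; but $\left\Vert\mathbb{D}_{\varepsilon}[\tilde w]\right\Vert_{L^2}$ contains $\varepsilon^{-1}\left\Vert\partial_{y_3}\tilde w\right\Vert_{L^2}$ and is \emph{not} uniformly bounded by $\left\Vert\tilde w\right\Vert_{(H^1_0(\Omega))^3}$. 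The prefactor must be kept: the viscous and Bingham terms give $C\,a_\varepsilon\left\Vert D_{\varepsilon}\tilde w\right\Vert_{L^2}+C\left\Vert\tilde w\right\Vert_{L^2}$, and it is the assumptions $a_\varepsilon/\varepsilon\to\lambda<+\infty$ or $a_\varepsilon\ll\varepsilon$ together with $a_\varepsilon\ll1$ in case (i) (respectively the prefactor $\varepsilon$ absorbing $\varepsilon^{-1}\partial_{y_3}$, plus $a_\varepsilon\ll1$, in case (ii)) that yield $C\left\Vert\tilde w\right\Vert_{(H^1_0(\Omega))^3}$, which is how the paper closes the argument. With that factor retained your proof is correct and coincides with the paper's.
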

\begin{proof}
Let us estimate $\nabla_{\varepsilon} \tilde{P}_{\varepsilon}$ in the cases $a_{\varepsilon}\approx \varepsilon$ or $a_\varepsilon\ll \varepsilon$. We estimate the right-hand side of (\ref{inequality_duality}). Using Cauchy-Schwarz's inequality and from the second estimate in (\ref{a1}) we have
\begin{eqnarray*}
| a_\varepsilon(\tilde u_\varepsilon,\tilde R^{\varepsilon}\tilde w)|&\leq& 2\mu\left\Vert \mathbb{D}_{\varepsilon}\left[\tilde{u}_{\varepsilon}\right]\right\Vert_{(L^2(\widetilde{\Omega}_{\varepsilon}))^{3\times3}}\left\Vert D_{\varepsilon}\tilde R^{\varepsilon}\tilde w\right\Vert_{(L^2(\widetilde{\Omega}_{\varepsilon}))^{3\times3}}\leq Ca_{\varepsilon}\left\Vert D_{\varepsilon} \tilde R^{\varepsilon}\tilde w\right\Vert_{(L^2(\widetilde{\Omega}_{\varepsilon}))^{3\times3}}.
\end{eqnarray*}
Using the assumption made on the function $f$, we obtain
\begin{eqnarray*}
|( f,\tilde R^{\varepsilon}\tilde w)_{\widetilde{\Omega}_{\varepsilon}}|\leq C\left\Vert \tilde R^{\varepsilon}\tilde w\right\Vert_{(L^2(\widetilde{\Omega}_{\varepsilon}))^3},
\end{eqnarray*}
and by Cauchy-Schwarz's inequality and taking into account that $|\widetilde{\Omega}_{\varepsilon}|\leq |\Omega|$, we obtain
\begin{eqnarray*}
j_\varepsilon(\tilde R^{\varepsilon}\tilde w)\leq C\,a_\varepsilon \left\Vert D_{\varepsilon} \tilde R^{\varepsilon}\tilde w\right\Vert_{(L^2(\widetilde{\Omega}_{\varepsilon}))^{3\times3}}.
\end{eqnarray*}
Then, from (\ref{inequality_duality}), we deduce
\begin{equation*}
\left\vert\left\langle \nabla_{\varepsilon}\tilde{P}_{\varepsilon},\tilde w\right\rangle_{\Omega}\right\vert\leq
C  a_{\varepsilon} \left\Vert D_{\varepsilon} \tilde R^{\varepsilon}\tilde w \right\Vert_{(L^2(\widetilde{\Omega}_{\varepsilon}))^{3\times3}}+C\left\Vert \tilde R^{\varepsilon}\tilde w\right\Vert_{(L^2(\widetilde{\Omega}_{\varepsilon}))^3}.
\end{equation*}
Taking into account the third point in Lemma 4.6 in \cite{Anguiano_SuarezGrau}, we have 
\begin{eqnarray*}
\left\vert \left\langle \nabla_{\varepsilon}\tilde{P}_{\varepsilon},\tilde w\right\rangle_{\Omega}\right\vert&\leq&
C a_{\varepsilon} \left(\frac{1}{a_{\varepsilon}}\left\Vert \tilde w\right\Vert_{(L^2(\Omega))^3}+\left\Vert D_{\varepsilon} \tilde w\right\Vert_{(L^2(\Omega))^{3\times3}} \right)+ C \left(\left\Vert \tilde w\right\Vert_{(L^2(\Omega))^3}+a_{\varepsilon}\left\Vert D_{\varepsilon} \tilde w\right\Vert_{(L^2(\Omega))^{3\times3}}\right).\nonumber
\end{eqnarray*}
If $a_{\varepsilon}\approx \varepsilon$ we take into account that $a_{\varepsilon}\ll1$, and if $a_\varepsilon\ll \varepsilon$ we take into account that $a_\varepsilon/\varepsilon\ll 1$ and $a_{\varepsilon}\ll1$, and we see that there exists a positive constant $C$ such that  
\begin{eqnarray*}
\left\vert \left\langle \nabla_{\varepsilon}\tilde{P}_{\varepsilon},\tilde w\right\rangle_{\Omega}\right\vert&\leq& C\left\Vert \tilde w\right\Vert_{(H_0^{1}(\Omega))^3}, \quad \forall \tilde w\in (H_0^{1}(\Omega))^3,
\end{eqnarray*}
and consequently
$$\left\Vert\nabla_{\varepsilon}\tilde{P}_{\varepsilon}\right\Vert_{(H^{-1}(\Omega))^3}\leq C.$$
It follows that (see for instance Girault and Raviart \cite{Girault_Raviart}, Chapter I, Corollary 2.1) there exists a representative of $\tilde P_\varepsilon\in L^2_0(\Omega)$ such that
$$\left\Vert \tilde P_\varepsilon\right\Vert_{L^2_0(\Omega)}\leq C \left\Vert\nabla \tilde{P}_{\varepsilon}\right\Vert_{(H^{-1}(\Omega))^3}\leq C \left\Vert\nabla_{\varepsilon}\tilde{P}_{\varepsilon}\right\Vert_{(H^{-1}(\Omega))^3}\leq C.$$

Finally, let us estimate $\nabla_{\varepsilon} \tilde{P}_{\varepsilon}$ in the case $a_\varepsilon\gg \varepsilon$.
Similarly to the previous case, we estimate the right side of (\ref{inequality_duality}) by using Cauchy-Schwarz's 
inequality and from the second estimate in (\ref{a2}), and we have
\begin{equation*}
\left\vert\left\langle \nabla_{\varepsilon}\tilde{P}_{\varepsilon},\tilde w\right\rangle_{\Omega}\right\vert\leq 
C  \varepsilon \left\Vert D_{\varepsilon} \tilde R^{\varepsilon}\tilde w \right\Vert_{(L^2(\widetilde{\Omega}_{\varepsilon}))^{3\times3}}+C\left\Vert \tilde R^{\varepsilon}\tilde w\right\Vert_{(L^2(\widetilde{\Omega}_{\varepsilon}))^3}.
\end{equation*}
Taking into account the proof in Lemma 4.5 in \cite{Anguiano_SuarezGrau}, the change of variables (\ref{dilatacion}) and that $a_\varepsilon\gg \varepsilon$, we can deduce 
\begin{eqnarray*}
\left\vert \left\langle \nabla_{\varepsilon}\tilde{P}_{\varepsilon},\tilde w\right\rangle_{\Omega}\right\vert&\leq&
C  \varepsilon \left({1\over \varepsilon}\left\Vert \tilde w\right\Vert_{(L^2(\Omega))^3}+{1\over \varepsilon}\left\Vert D_{x'} \tilde w\right\Vert_{(L^2(\Omega))^{3\times2}}+{1\over \varepsilon}\left\Vert \partial_{y_3} \tilde w\right\Vert_{(L^2(\Omega))^{3}} \right)\\
&+& C \left(\left\Vert \tilde w\right\Vert_{(L^2(\Omega))^3}+a_{\varepsilon}\left\Vert D_{x'} \tilde w\right\Vert_{(L^2(\Omega))^{3\times2}}+\left\Vert \partial_{y_3} \tilde w\right\Vert_{(L^2(\Omega))^{3}}\right),
\end{eqnarray*}
and using that $a_{\varepsilon}\ll1$, we see that there exists a positive constant $C$ such that  
\begin{eqnarray*}
\left\vert \left\langle \nabla_{\varepsilon}\tilde{P}_{\varepsilon},\tilde w\right\rangle_{\Omega}\right\vert&\leq& C\left\Vert \tilde w\right\Vert_{(H_0^{1}(\Omega))^3}, \quad \forall \tilde w\in (H_0^{1}(\Omega))^3,
\end{eqnarray*}
and reasing as the previous case, we have the estimate (\ref{esti_P}).
\end{proof}

According to these extensions, problem (\ref{VA_pressure2}) can be written as:
\begin{eqnarray}\label{extension_problem}
&&2\mu \int_{\Omega}\mathbb{D}_{\varepsilon}\left[\tilde{u}_{\varepsilon}\right]:\mathbb{D}_{\varepsilon}\left[\tilde v-\tilde{u}_{\varepsilon}\right]dx^{\prime}dy_3+\sqrt{2}g (\varepsilon) \int_{\Omega}|\mathbb{D}_{\varepsilon}[\tilde v]|dx'dy_3-\sqrt{2}g(\varepsilon)\int_{\Omega}|\mathbb{D}_{\varepsilon}[\tilde u_\varepsilon]|dx'dy_3\\
&&\ge\int_{\Omega}f\cdot(\tilde v-\tilde{u}_{\varepsilon})\,dx^{\prime}dy_3+\int_{\Omega}\tilde P_\varepsilon\, {\rm div}_\varepsilon (\tilde v-\tilde u_\varepsilon)dx'dy_3, \nonumber
\end{eqnarray}
for every $\tilde v$ that is the extension by zero to the whole $\Omega$ of a function in $(H_0^1(\widetilde \Omega_\varepsilon))^3$.

\section{Adaptation of the Unfolding Method}
The change of variable (\ref{dilatacion}) does not provide the information we need about the behavior 
of $\tilde{u}_{\varepsilon}$ in the microstructure associated to $\widetilde{\Omega}_{\varepsilon}$.
To solve this difficulty, we use an adaptation introduced in \cite{Anguiano_SuarezGrau} of the unfolding 
method from \cite{CDG}.

Let us recall this adaptation of the unfolding method in which we divide the domain $\Omega$ in cubes of lateral length $a_\varepsilon$ and vertical length $1$. For this purpose, given $(\tilde{u}_{\varepsilon}, \tilde{P}_{\varepsilon})\in (H_0^{1}(\Omega))^3\times L^{2}_0(\Omega)$, we define $(\hat{u}_{\varepsilon}, \hat{P}_{\varepsilon})$ by
\begin{eqnarray}\label{uhat}
\hat{u}_{\varepsilon}(x^{\prime},y)=\tilde{u}_{\varepsilon}\left( a_{\varepsilon}\kappa\left(\frac{x^{\prime}}{a_{\varepsilon}} \right)+a_{\varepsilon}y^{\prime},y_3 \right),\quad \hat{P}_{\varepsilon}(x^{\prime},y)=\tilde{P}_{\varepsilon}\left( a_{\varepsilon}\kappa\left(\frac{x^{\prime}}{a_{\varepsilon}} \right)+a_{\varepsilon}y^{\prime},y_3 \right),\text{\ \ a.e. \ }(x^{\prime},y)\in \omega\times Y,
\end{eqnarray}
where the function $\kappa$ is defined in (\ref{kappa_fun}).

\begin{remark}\label{remarkCV}
For $k^{\prime}\in \mathcal{K}_{\varepsilon}$, the restriction of $(\hat{u}_{\varepsilon}, \hat{P}_{\varepsilon})$ to $Y^{\prime}_{k^{\prime},a_{\varepsilon}}\times Y$ does not depend on $x^{\prime}$, whereas as a function of $y$ it is obtained from $(\tilde{u}_{\varepsilon}, \tilde{P}_{\varepsilon})$ by using the change of variables $\displaystyle y^{\prime}=\frac{x^{\prime}-a_{\varepsilon}k^{\prime}}{a_{\varepsilon}},$
which transforms $Y_{k^{\prime},a_{\varepsilon}}$ into $Y$.
\end{remark}
We are now in position to  obtain  estimates for the sequences $(\hat{u}_{\varepsilon}, \hat{P}_{\varepsilon})$, as in the proof
of Lemma 4.9 in \cite{Anguiano_SuarezGrau}.
\begin{lemma}\label{estCV}
There exists a constant $C$ independent of $\varepsilon$, such that the couple $(\hat{u}_{\varepsilon}, \hat{P}_{\varepsilon})$ defined by (\ref{uhat}) satisfies
\begin{itemize}
\item[i)] if $a_{\varepsilon}\approx \varepsilon$, with $a_\varepsilon/\varepsilon\to \lambda$, $0<\lambda<+\infty$, or $a_\varepsilon\ll \varepsilon$,
\begin{equation*}\label{UM1}
\left\Vert \hat{u}_{\varepsilon}\right\Vert_{(L^2(\omega\times Y))^3}\leq Ca_{\varepsilon}^{2},\quad \left\Vert \mathbb{D}_{y^{\prime}}\!\left[\hat{u}_{\varepsilon}\right]\right\Vert_{(L^2(\omega\times Y))^{{3\times2}}}\!\leq\! Ca_{\varepsilon}^{2},\quad \left\Vert \partial_{y_3}\!\left[\hat{u}_{\varepsilon}\right]\right\Vert_{(L^2(\omega\times Y))^3}\!\leq\! C\varepsilon\, a_{\varepsilon},
\end{equation*}
\item[ii)] if $a_\varepsilon\gg \varepsilon$, 
\begin{equation*}\label{UM3}
\left\Vert \hat{u}_{\varepsilon}\right\Vert_{(L^2(\omega\times Y))^3}\leq C\varepsilon^{2},\quad \left\Vert \mathbb{D}_{y^{\prime}}\!\left[\hat{u}_{\varepsilon}\right]\right\Vert_{(L^2(\omega\times Y))^{{3\times2}}}\!\leq\! Ca_{\varepsilon}\,\varepsilon,\quad \left\Vert \partial_{y_3}\!\left[\hat{u}_{\varepsilon}\right]\right\Vert_{(L^2(\omega\times Y))^3}\!\leq\! C\varepsilon^{2},
\end{equation*}
\end{itemize}
and, moreover, in every cases,
\begin{equation*}\label{UM5}
\left\Vert \hat{P}_{\varepsilon} \right\Vert_{L^{2}_0(\omega\times Y)}\leq C.
\end{equation*}
\end{lemma}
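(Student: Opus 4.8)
The plan is to obtain every bound of the lemma from the \emph{a priori} estimates of Lemma~\ref{Lemma_estimate} and from the pressure bound~\eqref{esti_P}, using only the elementary scaling properties of the unfolding change of variables~\eqref{uhat}; this follows the scheme of the proof of Lemma~4.9 in \cite{Anguiano_SuarezGrau}. The starting point is the change-of-variables identity attached to~\eqref{uhat}: by Remark~\ref{remarkCV}, on each cell $Y'_{k',a_\varepsilon}\times Y$ with $k'\in\mathcal{K}_\varepsilon$ the function $\hat u_\varepsilon$ is independent of $x'$ and is the pull-back of $\tilde u_\varepsilon$ under the map $y'=(x'-a_\varepsilon k')/a_\varepsilon$, of Jacobian $a_\varepsilon^{-2}$; integrating in $x'$ over $Y'_{k',a_\varepsilon}$ contributes the compensating factor $|Y'_{k',a_\varepsilon}|=a_\varepsilon^{2}$, and summing over $k'\in\mathcal{K}_\varepsilon$ gives
\[
\|\hat u_\varepsilon\|_{(L^2(\omega\times Y))^3}\le C\|\tilde u_\varepsilon\|_{(L^2(\Omega))^3},\qquad \|\hat P_\varepsilon\|_{L^2(\omega\times Y)}\le C\|\tilde P_\varepsilon\|_{L^2(\Omega)}.
\]
Applying the same identity to the mean value shows $\int_{\omega\times Y}\hat P_\varepsilon=\int_\Omega\tilde P_\varepsilon=0$, so $\hat P_\varepsilon\in L^2_0(\omega\times Y)$ and, together with~\eqref{esti_P}, the last estimate of the lemma follows. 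Likewise the first inequalities of~\eqref{a1} and~\eqref{a2} yield $\|\hat u_\varepsilon\|_{(L^2(\omega\times Y))^3}\le Ca_\varepsilon^2$ in case~(i) and $\le C\varepsilon^2$ in case~(ii).

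For the gradient estimates I would differentiate~\eqref{uhat}. Since the map $x'\mapsto a_\varepsilon\kappa(x'/a_\varepsilon)$ is locally constant, the chain rule gives, on each cell, $D_{y'}\hat u_\varepsilon=a_\varepsilon\,D_{x'}\tilde u_\varepsilon$ (evaluated at the corresponding point), the horizontal derivative producing a factor $a_\varepsilon$, whereas $\partial_{y_3}\hat u_\varepsilon=\partial_{y_3}\tilde u_\varepsilon$ carries no extra factor. After symmetrisation and the same cell-by-cell integration as above,
\[
\|\mathbb{D}_{y'}[\hat u_\varepsilon]\|_{(L^2(\omega\times Y))^{3\times2}}\le Ca_\varepsilon\|\mathbb{D}_{x'}[\tilde u_\varepsilon]\|_{(L^2(\Omega))^{3\times2}},\qquad \|\partial_{y_3}[\hat u_\varepsilon]\|_{(L^2(\omega\times Y))^3}\le C\|\partial_{y_3}[\tilde u_\varepsilon]\|_{(L^2(\Omega))^3}.
\]
To close the argument I would bound the right-hand sides through Lemma~\ref{Lemma_estimate}: the horizontal part $\mathbb{D}_{x'}[\tilde u_\varepsilon]$ is controlled by $\|D_\varepsilon\tilde u_\varepsilon\|_{(L^2(\Omega))^{3\times3}}$, because the first two columns of $D_\varepsilon\tilde u_\varepsilon$ are precisely the $x'$-derivatives of $\tilde u_\varepsilon$ and symmetrisation does not increase the Frobenius norm; the vertical part obeys $\|\partial_{y_3}[\tilde u_\varepsilon]\|\le\varepsilon\|D_\varepsilon\tilde u_\varepsilon\|$ because the third column of $D_\varepsilon$ carries the weight $1/\varepsilon$. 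Inserting the last inequality of~\eqref{a1} yields, in case~(i), $\|\mathbb{D}_{y'}[\hat u_\varepsilon]\|\le Ca_\varepsilon\cdot a_\varepsilon$ and $\|\partial_{y_3}[\hat u_\varepsilon]\|\le C\varepsilon\cdot a_\varepsilon$; inserting the last inequality of~\eqref{a2} yields, in case~(ii), $\|\mathbb{D}_{y'}[\hat u_\varepsilon]\|\le Ca_\varepsilon\cdot\varepsilon$ and $\|\partial_{y_3}[\hat u_\varepsilon]\|\le C\varepsilon\cdot\varepsilon$. These are exactly the asserted bounds.

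The one point that deserves care — and the only step that is not a bare scaling computation — is the role of the cells $Y'_{k',a_\varepsilon}$ that meet $\partial\omega$ but are not contained in $\omega$, for which formula~\eqref{uhat} formally calls for values of $\tilde u_\varepsilon$ outside $\Omega$. I would handle this exactly as in \cite{Anguiano_SuarezGrau}, using that $\tilde u_\varepsilon$ has been extended by zero and that the solids $\tau(\overline Y'_{s_{k'},a_\varepsilon})$ do not meet $\partial\omega$, so that this boundary layer contributes only a lower-order term, harmlessly absorbed in the constant $C$ above.
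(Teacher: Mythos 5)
Your proposal is correct and follows essentially the same route as the paper, which simply invokes the scaling properties of the unfolding operator as in Lemma 4.9 of \cite{Anguiano_SuarezGrau}: cell-by-cell change of variables transferring the $L^2$ bounds of Lemma \ref{Lemma_estimate} and \eqref{esti_P}, with the factor $a_\varepsilon$ for $y'$-derivatives and no factor for $\partial_{y_3}$. The only slight overstatement is the exact identity $\int_{\omega\times Y}\hat P_\varepsilon\,dx'dy=\int_\Omega \tilde P_\varepsilon\,dx'dy_3$, which fails exactly on the cells meeting $\partial\omega$; but this affects only the mean-value normalisation, not the $L^2$ bound, so the conclusion stands.
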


When $\varepsilon$ tends to zero, we obtain for problem \eqref{extension_problem} different behaviors,
depending on the magnitude of $a_\varepsilon$ with respect to $\varepsilon$. 
We will analyze them in the next sections.

\subsection{Critical case $a_{\varepsilon}\approx \varepsilon$, with $a_\varepsilon/\varepsilon\to \lambda$, $0<\lambda<+\infty$}
First, we obtain some compactness results about the behavior of the sequences $(\tilde u_\varepsilon, \tilde P_\varepsilon)$ and $(\hat u_\varepsilon, \hat P_\varepsilon)$ satisfying the {\it a priori} estimates given in Lemmas \ref{Lemma_estimate}-i) and \ref{estCV}-i), respectively. 
\begin{lemma}[Critical case]\label{Convergence_critical}
For a subsequence of $\varepsilon$ still denote by $\varepsilon$, there exist $\tilde{u}\in H^1(0,1;L^2(\omega)^3)$, where $\tilde u_3=0$ and $\tilde{u}=0$ on $y_3=\{0,1\}$, $\hat{u}\in L^2(\omega;H^1_{\sharp}(Y)^3)$ (``$\sharp$'' denotes $Y'$-periodicity), with $\hat{u}=0$ on $\omega\times Y_s$ and $\hat u=0$ on $y_3=\{0,1\}$ such that $\int_{Y}\hat u(x',y)dy=\int_0^1 \tilde u(x',y_3)dy_3$ with $\int_Y \hat u_3dy=0$, and $\hat{P}\in L^{2}_0(\omega \times Y)$, independent of $y$, such that
\begin{equation}\label{convtilde1}
{\tilde{u}_{\varepsilon} \over a_\varepsilon^2}\rightharpoonup (\tilde{u}^\prime,0)\text{\ in \ }H^1(0,1;L^2(\omega)^3),
\end{equation}
\begin{equation}\label{convUgorro1}
{\hat{u}_{\varepsilon}\over a_\varepsilon^2}\rightharpoonup \hat{u}\text{\ in \ }L^2(\omega;H^1(Y)^3), \quad \hat{P}_{\varepsilon}\rightharpoonup \hat{P}\text{\ in \ }L^{2}_0(\omega \times Y),
\end{equation}
\begin{equation}\label{divmacro_tilde1}
{\rm div}_{x^{\prime}}\left( \int_0^1 \tilde{u}^{\prime}(x^{\prime},y_3)dy_3 \right)=0 \text{\ in \ }\omega,\quad \left( \int_0^1 \tilde{u}^{\prime}(x^{\prime},y_3)dy_3 \right)\cdot n=0\text{\ on \ }\partial \omega,
\end{equation}
\begin{equation}\label{divmacro1}
{\rm div}_{\lambda}\hat{u}=0 \text{\ in \ }\omega\times Y,\quad {\rm div}_{x^{\prime}}\left( \int_Y \hat{u}^{\prime}(x^{\prime},y)dy \right)=0 \text{\ in \ }\omega,\quad \left( \int_Y \hat{u}^{\prime}(x^{\prime},y)dy \right)\cdot n=0\text{\ on \ }\partial \omega,
\end{equation}
where ${\rm div}_{\lambda}={\rm div}_{y^\prime}+\lambda \partial_{y_3}$.
\end{lemma}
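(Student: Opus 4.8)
The plan is to extract, from the \textit{a priori} bounds of Lemmas~\ref{Lemma_estimate}--i), \ref{estCV}--i) and the pressure bound, weakly convergent subsequences of the rescaled quantities $\tilde u_\varepsilon/a_\varepsilon^2$, $\hat u_\varepsilon/a_\varepsilon^2$ and $\hat P_\varepsilon$, and then to identify the limits by passing to the limit in the \emph{linear} relations satisfied by the sequences: the divergence constraint ${\rm div}_\varepsilon\tilde u_\varepsilon=0$, the no-slip condition, the vanishing on the solids, and the mean-value property of the unfolding operator \eqref{uhat}. The nonlinear terms of the variational inequality play no role at this stage.

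First, for the rescaled velocity, I observe that in the critical regime $a_\varepsilon\approx\varepsilon$ the $(\cdot,3)$-entries of $D_\varepsilon$ carry the factor $1/\varepsilon$, so the third bound in \eqref{a1} gives $\|\partial_{y_3}\tilde u_\varepsilon\|_{(L^2(\widetilde\Omega_\varepsilon))^3}\le C\varepsilon a_\varepsilon\le Ca_\varepsilon^2$; together with the first bound in \eqref{a1} this shows that the zero-extended $\tilde u_\varepsilon/a_\varepsilon^2$ is bounded in $H^1(0,1;L^2(\omega)^3)$, whence \eqref{convtilde1} along a subsequence, and $\tilde u=0$ on $y_3\in\{0,1\}$ by weak continuity of the trace. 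Likewise $\hat u_\varepsilon/a_\varepsilon^2$ is bounded in $L^2(\omega;H^1(Y)^3)$: the $L^2$- and $\mathbb D_{y'}$-bounds in Lemma~\ref{estCV}--i) are of order $a_\varepsilon^2$, the $\partial_{y_3}$-bound is of order $\varepsilon a_\varepsilon=O(a_\varepsilon^2)$, and a Korn-type inequality on the perforated cell (using $\hat u_\varepsilon=0$ on $\omega\times Y_s$, as in \cite{Anguiano_SuarezGrau}) upgrades the symmetrized $y'$-gradient bound to a full $y'$-gradient bound. The $Y'$-periodicity of the limit, i.e.\ $\hat u\in L^2(\omega;H^1_\sharp(Y)^3)$, is inherited from the unfolding construction, and $\hat u=0$ on $\omega\times Y_s$ and on $y_3\in\{0,1\}$ pass to the limit as closed convex constraints; this gives the first convergence in \eqref{convUgorro1}.

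Next I identify the compatibility relations. The mean-value property of the unfolding — $\int_{Y'}\hat u_\varepsilon(\cdot,y',\cdot)\,dy'$ is the piecewise-constant (in $x'$) average of $\tilde u_\varepsilon$ over the cells $Y'_{k',a_\varepsilon}$ — yields, after division by $a_\varepsilon^2$ and passage to the limit, $\int_{Y'}\hat u(x',y',y_3)\,dy'=\tilde u(x',y_3)$ and in particular $\int_Y\hat u\,dy=\int_0^1\tilde u\,dy_3$. Integrating ${\rm div}_\varepsilon\tilde u_\varepsilon=0$ in $y_3$ and using $\tilde u_{\varepsilon,3}=0$ on $y_3\in\{0,1\}$ gives ${\rm div}_{x'}\big(\int_0^1\tilde u_\varepsilon'\,dy_3\big)=0$ in $\omega$, with vanishing normal trace on $\partial\omega$ since the zero-extended $\tilde u_\varepsilon$ belongs to $(H^1_0(\Omega))^3$; dividing by $a_\varepsilon^2$ and passing to the limit yields \eqref{divmacro_tilde1}, and the analogous statement for $\int_Y\hat u'\,dy$ in \eqref{divmacro1} follows from the mean-value identity. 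Unfolding the constraint cell by cell gives ${\rm div}_{y'}\hat u_\varepsilon'+(a_\varepsilon/\varepsilon)\partial_{y_3}\hat u_{\varepsilon,3}=0$; dividing by $a_\varepsilon^2$ and using $a_\varepsilon/\varepsilon\to\lambda$ together with the weak convergence of the controlled derivatives produces ${\rm div}_\lambda\hat u=0$ in $\omega\times Y$. Integrating this over $Y'$ and using $Y'$-periodicity shows $\partial_{y_3}\int_{Y'}\hat u_3\,dy'=0$; since $\hat u_3=0$ on $y_3=0$, it follows that $\int_{Y'}\hat u_3\,dy'=0$ for every $y_3$, hence $\int_Y\hat u_3\,dy=0$ and, by the mean-value identity, $\tilde u_3=0$ — which completes \eqref{convtilde1} and \eqref{convUgorro1}.

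Finally, from $\|\hat P_\varepsilon\|_{L^2_0(\omega\times Y)}\le C$ I extract a weak limit $\hat P\in L^2_0(\omega\times Y)$. Its independence of $y$ is read off from the bound $\|\nabla_\varepsilon\tilde P_\varepsilon\|_{(H^{-1}(\Omega))^3}\le C$ established in Section~3: unfolding turns $\nabla_{x'}\tilde P_\varepsilon$ into $a_\varepsilon^{-1}\nabla_{y'}\hat P_\varepsilon$ and $\varepsilon^{-1}\partial_{y_3}\tilde P_\varepsilon$ into $\varepsilon^{-1}\partial_{y_3}\hat P_\varepsilon$, so that $\nabla_{y'}\hat P_\varepsilon$ (of order $a_\varepsilon$) and $\partial_{y_3}\hat P_\varepsilon$ (of order $\varepsilon$) both tend to zero in $H^{-1}(\omega\times Y)$; hence $\nabla_y\hat P=0$, i.e.\ $\hat P$ is independent of $y$. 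I expect the main technical points to be the treatment of the perforations in the cell Korn inequality and the careful justification of the commutation of the differential operators with the unfolding (and of the mean-value property), all of which follow the power-law Stokes arguments of \cite{Anguiano_SuarezGrau}; the genuinely new difficulties of the Bingham model are confined to the passage to the limit in the variational inequality, performed in the following step.
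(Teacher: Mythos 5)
The velocity part of your plan is correct and is essentially what the paper does (for \eqref{convtilde1}--\eqref{divmacro1} the paper simply invokes Lemmas 5.2--5.4 of \cite{Anguiano_SuarezGrau}). The genuine gap is in the one step that is actually new in this lemma: the proof that $\hat P$ does not depend on $y$. You deduce it from the bound $\Vert\nabla_\varepsilon\tilde P_\varepsilon\Vert_{(H^{-1}(\Omega))^3}\leq C$ by arguing that $\nabla_{y'}\hat P_\varepsilon$ (which equals $a_\varepsilon$ times the unfolding of $\nabla_{x'}\tilde P_\varepsilon$) is of order $a_\varepsilon$, and $\partial_{y_3}\hat P_\varepsilon$ of order $\varepsilon$, in $H^{-1}(\omega\times Y)$. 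This transfer of negative-norm bounds through the unfolding operator is false: the unfolding is not uniformly bounded from $H^{-1}(\Omega)$ into $H^{-1}(\omega\times Y)$, because dualizing in the cell variable $y'$ amounts to dualizing against macroscopic test functions oscillating at scale $a_\varepsilon$, whose $H^1_0(\Omega)$-norms are of order $a_\varepsilon^{-1}$; this loss exactly cancels the factor $a_\varepsilon$ (resp.\ $\varepsilon$) you are counting on. Concretely, take $\tilde P_\varepsilon(x',y_3)=\sin(2\pi x_1/a_\varepsilon)\,q(y_3)$ with $q$ non-constant (mean-value corrections aside): it is bounded in $L^2(\Omega)$, $\nabla_{x'}\tilde P_\varepsilon$ is bounded in $(H^{-1}(\Omega))^2$ (automatically, from the $L^2$ bound), and $\varepsilon^{-1}\partial_{y_3}\tilde P_\varepsilon=-\frac{a_\varepsilon}{2\pi\varepsilon}\partial_{x_1}\bigl(\cos(2\pi x_1/a_\varepsilon)q'(y_3)\bigr)$ is bounded in $H^{-1}(\Omega)$ since $a_\varepsilon/\varepsilon\to\lambda$; yet the unfolded sequence converges weakly to $\sin(2\pi y_1)q(y_3)$, which depends on both $y'$ and $y_3$. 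Hence the a priori estimate you quote cannot, by itself, yield the $y$-independence of $\hat P$.

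What is needed --- and what the paper does --- is to return to the variational inequality \eqref{extension_problem} with the oscillating test functions $\pm a_\varepsilon\tilde v(x',x'/a_\varepsilon,y_3)$, $\tilde v\in\mathcal{D}(\omega;C^\infty_\sharp(Y)^3)$ vanishing on $\omega\times Y_s$. With the scaling $g(\varepsilon)=g\,a_\varepsilon$ and the estimates of Lemma \ref{estCV}-i), the viscous terms, the force terms and the Bingham term evaluated on the test function are all $o(1)$, while the Bingham term of $\tilde u_\varepsilon$ enters with a sign that allows it to be discarded (its $\liminf$ is nonnegative); each choice of sign then produces one inequality for the pressure term, and the two together give $\int_{\omega\times Y}\hat P\,{\rm div}_\lambda\tilde v\,dx'dy=0$ for all admissible $\tilde v$, i.e.\ $y$-independence. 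Note in particular that, contrary to your remark that the nonlinear terms play no role at this stage, they do appear here and must be controlled by this sign/convexity argument, and, because one works with an inequality rather than an equality, testing with both $+a_\varepsilon\tilde v$ and $-a_\varepsilon\tilde v$ is essential.
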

\begin{proof}
We refer the reader to  Lemmas 5.2, 5.3 and 5.4 in \cite{Anguiano_SuarezGrau} for the proof of
(\ref{convtilde1})-(\ref{divmacro1}). Here, we prove that $\hat P$ does not depend on the microscopic variable $y$.
To do this, we choose as test function $\tilde v(x^{\prime},y)\in \mathcal{D}(\omega;C_{\sharp}^{\infty}(Y)^3)$ with $\tilde v(x^{\prime},y)=0\in \omega \times Y_s$ (thus, $\tilde
v(x^{\prime},x^{\prime}/a_{\varepsilon},y_3)\in (H_0^{1}(\widetilde{\Omega}_{\varepsilon}))^3$). 
Setting $a_\varepsilon\tilde v(x',x'/a_\varepsilon, y_3)$ in (\ref{extension_problem}) (we recall that $g(\varepsilon)= g\, a_\varepsilon)$) and using 
that ${\rm div}_\varepsilon \tilde u_\varepsilon=0$, we have
\begin{eqnarray}\label{problem_pressure1}
&&2\mu a_\varepsilon\int_{\Omega}\mathbb{D}_\varepsilon \left[\tilde{u}_\varepsilon \right] :\left(\mathbb{D}_{x^\prime}\left[\tilde v\right]+\frac{1}{a_\varepsilon} \mathbb{D}_{y^\prime}\left[\tilde v\right]+\frac{1}{\varepsilon}\partial_{y_3}\left[\tilde v\right]\right) dx^{\prime}dy_3-2\mu\int_{\Omega}|\mathbb{D}_\varepsilon \left[\tilde{u}_\varepsilon \right]|^2dx'dy_3 \\
&&+\sqrt{2}g\,a_\varepsilon^2 \int_{\Omega}\left|\mathbb{D}_{x^\prime}\left[\tilde v\right]+\frac{1}{a_\varepsilon} \mathbb{D}_{y^\prime}\left[\tilde v\right]+\frac{1}{\varepsilon}\partial_{y_3}\left[\tilde v\right]\right|dx'dy_3-\sqrt{2}g\,a_\varepsilon \int_{\Omega}|\mathbb{D}_{\varepsilon}[\tilde u_\varepsilon]|dx'dy_3 \nonumber\\
&&\ge a_\varepsilon \int_{\Omega}f'\cdot\tilde v'\,dx^{\prime}dy_3-\int_{\Omega}f'\cdot\tilde{u}'_{\varepsilon}\,dx^{\prime}dy_3+a_\varepsilon\int_{\Omega} \tilde{P}_\varepsilon\, {\rm div}_{x^\prime} \tilde v^\prime\,dx^{\prime}dy_3+\int_{\Omega} \tilde{P}_\varepsilon\, {\rm div}_{y^\prime} \tilde v^\prime\,dx^{\prime}dy_3 +\frac{a_\varepsilon}{\varepsilon}\int_{\Omega}\tilde{P}_\varepsilon\,\partial_{y_3}\tilde v_3\,dx^{\prime}dy_3. \nonumber
\end{eqnarray}
By the change of variables given in Remark \ref{remarkCV} and by Lemma \ref{estCV}, we get for the first term in relation (\ref{problem_pressure1})
\begin{eqnarray}\label{term1_CV}
&&\int_{\Omega}\mathbb{D}_\varepsilon \left[\tilde{u}_\varepsilon \right] :\left(\mathbb{D}_{x^\prime}\left[\tilde v\right]+\frac{1}{a_\varepsilon} \mathbb{D}_{y^\prime}\left[\tilde v\right]+\frac{1}{\varepsilon}\partial_{y_3}\left[\tilde v\right]\right) dx^{\prime}dy_3\\
&&=\int_{\omega\times Y}\left(\frac{1}{a_\varepsilon}\mathbb{D}_{y^\prime} \left[\hat{u}_\varepsilon \right] +\frac{1}{\varepsilon}\partial_{y_3}\left[\hat{u}_\varepsilon \right]\right):\left(\frac{1}{a_\varepsilon} \mathbb{D}_{y^\prime}\left[\tilde v\right]+\frac{1}{\varepsilon}\partial_{y_3}\left[\tilde v\right]\right)dx^{\prime}dy+ O_\varepsilon,\nonumber
\end{eqnarray}
and for the second term in relation (\ref{problem_pressure1})
\begin{eqnarray}\label{term0_CV}
\int_{\Omega}|\mathbb{D}_\varepsilon \left[\tilde{u}_\varepsilon \right]|^2dx'dy_3=\int_{\omega\times Y}\left|\frac{1}{a_\varepsilon} \mathbb{D}_{y^\prime}\left[\hat u_\varepsilon\right]+\frac{1}{\varepsilon}\partial_{y_3}\left[\hat u_\varepsilon\right]\right|^2dx'dy=O_\varepsilon.
\end{eqnarray}
Moreover, applying the change of variables given in Remark \ref{remarkCV} to the fourth term in relation (\ref{problem_pressure1}), we have
\begin{eqnarray}\label{term2_CV}
&&\int_{\Omega}|\mathbb{D}_{\varepsilon}[\tilde u_\varepsilon]|dx'dy_3=\int_{\omega\times Y}\left|\frac{1}{a_\varepsilon} \mathbb{D}_{y^\prime}\left[\hat u_\varepsilon\right]+\frac{1}{\varepsilon}\partial_{y_3}\left[\hat u_\varepsilon\right]\right|dx'dy.
\end{eqnarray}
Therefore, applying the change of variables given in Remark \ref{remarkCV} to relation (\ref{problem_pressure1}), we obtain
\begin{eqnarray}\label{problem2_pressure1}
&&2\mu a_\varepsilon\int_{\omega\times Y}\left(\frac{1}{a_\varepsilon}\mathbb{D}_{y^\prime} \left[\hat{u}_\varepsilon \right] +\frac{1}{\varepsilon}\partial_{y_3}\left[\hat{u}_\varepsilon \right]\right):\left(\frac{1}{a_\varepsilon} \mathbb{D}_{y^\prime}\left[\tilde v\right]+\frac{1}{\varepsilon}\partial_{y_3}\left[\tilde v\right]\right)dx^{\prime}dy \\
&&+\sqrt{2}g\,a_\varepsilon^2\int_{\omega\times Y}\left|\mathbb{D}_{x^\prime}\left[\tilde v\right]+\frac{1}{a_\varepsilon} \mathbb{D}_{y^\prime}\left[\tilde v\right]+\frac{1}{\varepsilon}\partial_{y_3}\left[\tilde v\right]\right|dx'dy-\sqrt{2}g\, a_\varepsilon \int_{\omega\times Y}\left|\frac{1}{a_\varepsilon} \mathbb{D}_{y^\prime}\left[\hat u_\varepsilon\right]+\frac{1}{\varepsilon}\partial_{y_3}\left[\hat u_\varepsilon\right]\right|dx'dy+O_\varepsilon \nonumber \\
&&\ge a_\varepsilon \int_{\omega\times Y}f'\cdot \tilde v'\,dx^\prime dy-\int_{\omega\times Y}f'\cdot \hat u_\varepsilon'\,dx^\prime dy+a_\varepsilon\int_{\omega\times Y} \hat{P}_\varepsilon\, {\rm div}_{x^\prime} \tilde v^\prime\,dx^{\prime}dy+\int_{\omega\times Y} \hat{P}_\varepsilon\, {\rm div}_{y^\prime} \tilde v^\prime\,dx^{\prime}dy \nonumber\\
&&+\frac{a_\varepsilon}{\varepsilon}\int_{\omega\times Y}\hat{P}_\varepsilon\,\partial_{y_3}\tilde v_3\,dx^{\prime}dy+O_\varepsilon.\nonumber
\end{eqnarray}
According with (\ref{convUgorro1}), the first term in relation (\ref{problem2_pressure1}) can be written by the following way
\begin{equation}\label{limit1_pressure1}
2\mu a_\varepsilon\int_{\omega\times Y}\left(\frac{1}{a_\varepsilon^2}\mathbb{D}_{y^\prime} \left[\hat{u}_\varepsilon \right] +{a_\varepsilon \over \varepsilon}\frac{1}{a_\varepsilon^2}\partial_{y_3}\left[\hat{u}_\varepsilon \right]\right):\left(\mathbb{D}_{y^\prime}\left[\tilde v\right]+\frac{a_\varepsilon}{\varepsilon}\partial_{y_3}\left[\tilde v\right]\right)dx^{\prime}dy\to 0,  \text{ as } \varepsilon\to 0.
\end{equation}
In order to pass to the limit in the first nonlinear term, we have
\begin{eqnarray}\label{limit2_pressure1}
&&\sqrt{2}ga_\varepsilon\int_{\omega\times Y}\left|a_\varepsilon \mathbb{D}_{x^\prime}\left[\tilde v\right]+
 \mathbb{D}_{y^\prime}\left[\tilde v\right]+\frac{a_\varepsilon}{\varepsilon}\partial_{y_3}\left[\tilde v\right]\right|dx'dy \to 0, \text{ as } \varepsilon\to 0.
\end{eqnarray}
Now, in order to pass the limit in the second nonlinear term, we are taking into account that
\begin{eqnarray*}
\sqrt{2}g \,a_\varepsilon \int_{\omega\times Y}\left|\frac{1}{a_\varepsilon} \mathbb{D}_{y^\prime}\left[\hat u_\varepsilon\right]+
\frac{1}{\varepsilon}\partial_{y_3}\left[\hat u_\varepsilon\right]\right|dx'dy=\sqrt{2}g\, a_\varepsilon^2\int_{\omega\times Y}\left|\frac{1}{a_\varepsilon^2} \mathbb{D}_{y^\prime}\left[\hat u_\varepsilon\right]+\frac{a_\varepsilon}{\varepsilon}{1\over a_\varepsilon^2}\partial_{y_3}\left[\hat u_\varepsilon\right]\right|dx'dy,
\end{eqnarray*}
and using (\ref{convUgorro1}) and the fact that the function $E(\varphi)=|\varphi|$ is proper convex continuous, we can deduce that
\begin{eqnarray}\label{limit3_pressure1}
\liminf_{\varepsilon \to 0}\sqrt{2}g\, a_\varepsilon \int_{\omega\times Y}\left|\frac{1}{a_\varepsilon} \mathbb{D}_{y^\prime}\left[\hat u_\varepsilon\right]+\frac{1}{\varepsilon}\partial_{y_3}\left[\hat u_\varepsilon\right]\right|dx'dy\ge0.
\end{eqnarray}
Moreover, using (\ref{convUgorro1}) the two first terms in the right hand side of (\ref{problem2_pressure1}) can be written by 
\begin{equation}\label{limit4_pressure1}
a_\varepsilon\int_{\omega\times Y}f'\cdot \tilde v'\,dx'dy-a_\varepsilon^2\int_{\omega\times Y}f'\cdot{\hat u_\varepsilon'\over a_\varepsilon^2}\,dx^\prime dy\to 0, \text{ as } \varepsilon\to 0.
\end{equation}
We consider now the terms which involve the pressure. Taking into account the convergence of the pressure (\ref{convUgorro1}), passing to the limit when $\varepsilon$ tends to zero, we have
\begin{equation}\label{limit5_pressure1}
\int_{\omega\times Y} \hat{P}\, {\rm div}_{\lambda} \tilde v\,dx^{\prime}dy.
\end{equation}
Therefore, taking into account (\ref{limit1_pressure1})-(\ref{limit5_pressure1}), when we pass to the limit in (\ref{problem2_pressure1}) when $\varepsilon$ tends to zero, we have $ 0\ge\int_{\omega\times Y} \hat{P}\, {\rm div}_{\lambda} \tilde v\,dx^{\prime}dy.$
Now, if we choose as test function $-a_\varepsilon\tilde v(x',x'/a_\varepsilon, y_3)$ in (\ref{extension_problem}) and we argue similarly, we obtain $\int_{\omega\times Y} \hat{P}\, {\rm div}_{\lambda} \tilde v\,dx^{\prime}dy\ge0.$
Thus, we can deduce that $\int_{\omega\times Y} \hat{P}\, {\rm div}_{\lambda} \tilde v\,dx^{\prime}dy=0,$ which shows that $\hat P$ does not depend on $y$.
\end{proof}

\begin{theorem}[Critical case]\label{CriticalCase}
If $a_{\varepsilon}\approx \varepsilon$, with $a_\varepsilon/\varepsilon\to \lambda$, $0<\lambda<+\infty$, then $(\hat u_\varepsilon/a_\varepsilon^2,\hat P_\varepsilon)$ converges to $(\hat u,\hat P)$ in $L^2(\omega;H^1(Y)^3)\times L^{2}_0(\omega \times Y)$, which satisfies the following variational inequality 
\begin{eqnarray}\label{limit_critical}
&&2\mu\int_{\omega\times Y}\mathbb{D}_{\lambda} \left[\hat{u} \right] :\left(\mathbb{D}_{\lambda}\left[\tilde v\right] -
\mathbb{D}_{\lambda}\left[\hat u\right] \right)dx^{\prime}dy+ \sqrt{2}g \int_{\omega\times Y}\left| \mathbb{D}_{\lambda}\left[\tilde v\right]\right|dx'dy
- \sqrt{2}g\int_{\omega\times Y}\left| \mathbb{D}_{\lambda}\left[\hat u\right]\right|dx'dy \nonumber\\
&&\ge\int_{\omega\times Y}f'\cdot \left(\tilde v' - \hat u' \right)\,dx^\prime dy-\int_{\omega\times Y}\nabla_{x'}\hat P\,
\left( \tilde v' - \hat u' \right) \,dx^\prime dy,
\end{eqnarray}
where $\mathbb{D}_\lambda[\cdot]=\mathbb{D}_{y^\prime}[\cdot]+\lambda\partial_{y_3}[\cdot]$ and 
for every $\tilde v\in L^2(\omega;H^1(Y)^3)$ such that 
$$\tilde v(x',y)=0 \text{ in } \omega\times Y_s,\quad {\rm div}_{\lambda}\tilde v=0 \text{ in }\omega\times Y, \quad
 \left( \int_Y \tilde v^{\prime}(x^{\prime},y)dy \right)\cdot n=0\text{\ on \ }\partial \omega.$$
\end{theorem}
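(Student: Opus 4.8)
\emph{Approach.} The plan is to pass to the limit in the unfolded form of the variational inequality \eqref{extension_problem}, combining the compactness results of Lemma \ref{Convergence_critical} with a lower semicontinuity (monotonicity) argument. First I would fix an admissible smooth limit test function: let $\tilde v\in\mathcal D(\omega;C^\infty_\sharp(Y)^3)$ with $\tilde v=0$ on $\omega\times Y_s$, $\tilde v=0$ on $y_3=\{0,1\}$, ${\rm div}_\lambda\tilde v=0$ in $\omega\times Y$, and $\big(\int_Y\tilde v'\,dy\big)\cdot n=0$ on $\partial\omega$; the general test function will be reached by density at the very end. To such a $\tilde v$ I would associate an oscillating test function for \eqref{extension_problem} of the form
$$\tilde v_\varepsilon(x',y_3)=\Big(a_\varepsilon^2\,\tilde v'(x',x'/a_\varepsilon,y_3),\ \lambda\,\varepsilon a_\varepsilon\,\tilde v_3(x',x'/a_\varepsilon,y_3)\Big),$$
reading $x'/a_\varepsilon$ as the $Y'$-periodic variable. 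The anisotropic rescaling of the vertical component, tuned to $a_\varepsilon/\varepsilon\to\lambda$, is chosen precisely so that, using ${\rm div}_\lambda\tilde v=0$, the microscopic part of ${\rm div}_\varepsilon\tilde v_\varepsilon$ cancels exactly and one is left with ${\rm div}_\varepsilon\tilde v_\varepsilon=a_\varepsilon^2\,{\rm div}_{x'}\tilde v'(x',x'/a_\varepsilon,y_3)$. Since $\tilde v$ vanishes on the solid part, near $\partial\omega$ and on $y_3=\{0,1\}$, the function $\tilde v_\varepsilon$ is (the extension by zero of) an element of $(H^1_0(\widetilde\Omega_\varepsilon))^3$, hence admissible in \eqref{extension_problem}; moreover its unfolding $\hat v_\varepsilon$ satisfies $\hat v_\varepsilon/a_\varepsilon^2\to\tilde v$ strongly in $L^2(\omega;H^1(Y)^3)$ and, because $a_\varepsilon/\varepsilon\to\lambda$, $\mathbb D_\varepsilon[\hat v_\varepsilon]/a_\varepsilon\to\mathbb D_\lambda[\tilde v]$ strongly in $L^2(\omega\times Y)$.

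\emph{Passing to the limit.} I would insert $\tilde v_\varepsilon$ in \eqref{extension_problem}, apply the change of variables of Remark \ref{remarkCV} to rewrite all integrals over $\Omega$ as integrals over $\omega\times Y$ (up to error terms $O_\varepsilon$ from the cells meeting $\partial\omega$, which vanish since $\tilde v$ has compact support in $\omega$), divide by $a_\varepsilon^2$, and pass to the limit term by term. In the bilinear term, write $a_\varepsilon(\tilde u_\varepsilon,\tilde v_\varepsilon-\tilde u_\varepsilon)=a_\varepsilon(\tilde u_\varepsilon,\tilde v_\varepsilon)-a_\varepsilon(\tilde u_\varepsilon,\tilde u_\varepsilon)$: the rescaled cross term converges to $2\mu\int_{\omega\times Y}\mathbb D_\lambda[\hat u]:\mathbb D_\lambda[\tilde v]$ by the weak convergence $\mathbb D_\varepsilon[\hat u_\varepsilon]/a_\varepsilon\rightharpoonup\mathbb D_\lambda[\hat u]$ (a consequence of \eqref{convUgorro1}) against the strong convergence of $\mathbb D_\varepsilon[\hat v_\varepsilon]/a_\varepsilon$, while the quadratic term $-2\mu a_\varepsilon^{-2}\Vert\mathbb D_\varepsilon[\tilde u_\varepsilon]\Vert_{L^2}^2$ has $\limsup\le-2\mu\Vert\mathbb D_\lambda[\hat u]\Vert_{L^2(\omega\times Y)}^2$ by weak lower semicontinuity of the $L^2$-norm. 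For the plasticity terms, $j_\varepsilon(\tilde v_\varepsilon)/a_\varepsilon^2\to\sqrt2 g\int_{\omega\times Y}|\mathbb D_\lambda[\tilde v]|$ by strong convergence, whereas $-j_\varepsilon(\tilde u_\varepsilon)/a_\varepsilon^2$ has $\limsup\le-\sqrt2 g\int_{\omega\times Y}|\mathbb D_\lambda[\hat u]|$ since $w\mapsto\int|w|$ is convex and continuous, hence weakly lower semicontinuous. The force term $a_\varepsilon^{-2}(f,\tilde v_\varepsilon-\tilde u_\varepsilon)_{\widetilde\Omega_\varepsilon}$ converges to $\int_{\omega\times Y}f'\cdot(\tilde v'-\hat u')$, using $f=(f',0)$ independent of $y$, the convergence \eqref{convtilde1}, and the identity $\int_Y\hat u\,dy=\int_0^1\tilde u\,dy_3$. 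Finally, since ${\rm div}_\varepsilon\tilde u_\varepsilon=0$, the pressure term reduces to $a_\varepsilon^{-2}(\tilde P_\varepsilon,{\rm div}_\varepsilon\tilde v_\varepsilon)_\Omega=\int_\Omega\tilde P_\varepsilon\,{\rm div}_{x'}\tilde v'(x',x'/a_\varepsilon,y_3)\,dx'dy_3$, which by unfolding and the weak convergence $\hat P_\varepsilon\rightharpoonup\hat P$ tends to $\int_{\omega\times Y}\hat P\,{\rm div}_{x'}\tilde v'\,dx'dy=-\int_{\omega\times Y}\nabla_{x'}\hat P\cdot\tilde v'\,dx'dy$ (integrating by parts in $x'$, using that $\hat P$ is independent of $y$ and $\big(\int_Y\tilde v'\big)\cdot n=0$ on $\partial\omega$); by \eqref{divmacro1} this equals $-\int_{\omega\times Y}\nabla_{x'}\hat P\cdot(\tilde v'-\hat u')\,dx'dy$.

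\emph{Conclusion.} Since for every $\varepsilon$ the left-hand side of \eqref{extension_problem} dominates its right-hand side, taking $\limsup$ on the left and $\lim$ on the right and collecting the above terms yields exactly \eqref{limit_critical} for every smooth admissible $\tilde v$. The smoothness restriction is then removed by a density argument: smooth functions of the above type are dense, for the norm of $L^2(\omega;H^1(Y)^3)$, in the class of test functions of the theorem, and one passes to the limit in \eqref{limit_critical} along an approximating sequence, the nonlinear functional $\tilde v\mapsto\int_{\omega\times Y}|\mathbb D_\lambda[\tilde v]|$ being continuous for this norm. Because the limit problem \eqref{limit_critical} has a unique solution $\hat u$ (the bilinear form is symmetric and coercive on the closed convex constraint set by Korn's and Poincaré's inequalities on $Y$) and $\hat P$ is determined as the associated pressure, the convergences hold for the whole sequence $\varepsilon$ rather than for a subsequence.

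\emph{Main obstacle.} The delicate point is the construction of $\tilde v_\varepsilon$: one must simultaneously ensure that it is admissible in $(H^1_0(\widetilde\Omega_\varepsilon))^3$ (vanishing on all obstacles and on $\partial\widetilde\Omega_\varepsilon$), that its rescaled unfolding converges strongly in $L^2(\omega;H^1(Y)^3)$ so as to generate the correct limiting bilinear and plasticity terms, and that ${\rm div}_\varepsilon\tilde v_\varepsilon$ be controlled well enough for the pressure term to pass to the limit — which is exactly why the vertical component is rescaled by $\lambda\varepsilon a_\varepsilon$ rather than $a_\varepsilon^2$. The accompanying density statement for the constrained limit test-function space (respecting ${\rm div}_\lambda\tilde v=0$ and the vanishing on $\omega\times Y_s$) is the other technical point requiring care.
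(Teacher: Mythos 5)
Your proposal follows essentially the same route as the paper's proof: the same oscillating test function $a_\varepsilon^2(\tilde v',\lambda\varepsilon a_\varepsilon^{-1}\tilde v_3)$ evaluated at $x'/a_\varepsilon$, division of \eqref{extension_problem} by $a_\varepsilon^{-2}$, unfolding via Remark \ref{remarkCV}, weak convergence against strongly converging test terms for the cross and force terms, weak lower semicontinuity for the quadratic and plasticity terms, and the same treatment of the pressure (cancellation of the microscopic divergence thanks to ${\rm div}_\lambda\tilde v=0$, then \eqref{divmacro1} and the $y$-independence of $\hat P$ to produce $-\int\nabla_{x'}\hat P\cdot(\tilde v'-\hat u')$). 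Your added remarks on density of smooth admissible test functions and on whole-sequence convergence are sound complements that the paper leaves implicit, but they do not change the argument.
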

\begin{proof}
%%%%%%%%%%%%
We choose a test function $\tilde v(x^{\prime},y)\in \mathcal{D}(\omega;C_{\sharp}^{\infty}(Y)^3)$ with $\tilde v(x^{\prime},y)=0\in \omega \times Y_s$ (thus, we have that $\tilde v(x^{\prime},x^{\prime}/a_{\varepsilon},y_3)\in (H_0^{1}(\widetilde{\Omega}_{\varepsilon}))^3$). 
We first multiply (\ref{extension_problem}) by $a_\varepsilon^{-2}$ and we use that ${\rm div}_\varepsilon \tilde u_\varepsilon=0$. Then, we take as test function $a_\varepsilon^{2}\tilde v_\varepsilon=a_\varepsilon^{2} (\tilde v'(x',x'/a_\varepsilon, y_3), \lambda  
 \varepsilon/a_\varepsilon v_3(x',x'/a_\varepsilon, y_3))$,
 with $\tilde v(x',y)=0$ in $\omega\times Y_s$ and satisfying the incompressibility  conditions (\ref{divmacro1}),
that is, ${\rm div}_{\lambda}\tilde v=0$ in $\omega\times Y$ and $\left( \int_Y \tilde v^{\prime}(x^{\prime},y)dy \right)\cdot n=0$ on $\partial \omega$, and we have
\begin{eqnarray}\label{extension_problem2}
&&2\mu\int_{\Omega}\mathbb{D}_\varepsilon \left[\tilde{u}_\varepsilon \right] :\left(\mathbb{D}_{x^\prime}\left[\tilde v_\varepsilon\right]+\frac{1}{a_\varepsilon} \mathbb{D}_{y^\prime}\left[\tilde v_\varepsilon\right]+\frac{1}{\varepsilon}\partial_{y_3}\left[\tilde v_\varepsilon\right]\right) dx^{\prime}dy_3-2\mu {1\over a_\varepsilon^2}\int_{\Omega}|\mathbb{D}_\varepsilon \left[\tilde{u}_\varepsilon \right]|^2dx'dy_3 \\
&&+\sqrt{2}g \, a_\varepsilon\int_{\Omega}\left|\mathbb{D}_{x^\prime}\left[\tilde v_\varepsilon\right]+\frac{1}{a_\varepsilon} 
\mathbb{D}_{y^\prime}\left[\tilde v_\varepsilon\right]+\frac{1}{\varepsilon}\partial_{y_3}\left[\tilde 
v_\varepsilon\right]\right|dx'dy_3-\sqrt{2}g{1\over a_\varepsilon}\int_{\Omega}|\mathbb{D}_{\varepsilon}[\tilde u_\varepsilon]|dx'dy_3 \nonumber\\
&&\ge\int_{\Omega}f'\cdot\tilde v'\,dx^{\prime}dy_3-{1\over a_\varepsilon^2}\int_{\Omega}f'\cdot\tilde{u}'_{\varepsilon}\,dx^{\prime}dy_3+\int_{\Omega} \tilde{P}_\varepsilon\, {\rm div}_{x^\prime} \tilde v^\prime\,dx^{\prime}dy_3+\frac{1}{a_\varepsilon}\int_{\Omega} \tilde{P}_\varepsilon\, {\rm div}_{y^\prime} \tilde v^\prime\,dx^{\prime}dy_3 +{\lambda\over a_\varepsilon}\int_{\Omega}\tilde{P}_\varepsilon\,\partial_{y_3}\tilde v_3\,dx^{\prime}dy_3. \nonumber
\end{eqnarray}

By the change of variables given in Remark \ref{remarkCV} and by Lemma \ref{estCV}, we have (\ref{term1_CV}) for the first term in relation (\ref{extension_problem2}), and for the second term in relation (\ref{extension_problem2}) we obtain
\begin{eqnarray}\label{term3_CV}
\int_{\Omega}|\mathbb{D}_\varepsilon \left[\tilde{u}_\varepsilon \right]|^2dx'dy_3=
\int_{\omega\times Y}\left|\frac{1}{a_\varepsilon} \mathbb{D}_{y^\prime}\left[\hat u_\varepsilon\right]+\frac{1}{\varepsilon}
\partial_{y_3}\left[\hat u_\varepsilon\right]\right|^2dx'dy.
\end{eqnarray}
Moreover, applying the change of variables given in Remark \ref{remarkCV} to the fourth term in relation 
(\ref{extension_problem2}), we have (\ref{term2_CV}). Therefore, applying the change of variables given in 
Remark \ref{remarkCV} to relation (\ref{extension_problem2}), we obtain
\begin{eqnarray}\label{formvarcvuprime}
&&2\mu\int_{\omega\times Y}\left(\frac{1}{a_\varepsilon}\mathbb{D}_{y^\prime} \left[\hat{u}_\varepsilon \right] +
\frac{1}{\varepsilon}\partial_{y_3}\left[\hat{u}_\varepsilon \right]\right):\left(\frac{1}{a_\varepsilon} 
\mathbb{D}_{y^\prime}\left[\tilde v_\varepsilon\right]+\frac{1}{\varepsilon}\partial_{y_3}\left[\tilde v_\varepsilon\right]\right)dx^{\prime}dy -2\mu {1\over a_\varepsilon^2}\int_{\omega\times Y}\left|\frac{1}{a_\varepsilon} \mathbb{D}_{y^\prime}\left[\hat u_\varepsilon\right]+\frac{1}{\varepsilon}
\partial_{y_3}\left[\hat u_\varepsilon\right]\right|^2dx'dy\nonumber\\
&&+\sqrt{2}g \, a_\varepsilon\int_{\omega\times Y}\left|\mathbb{D}_{x^\prime}\left[\tilde v_\varepsilon\right]+
\frac{1}{a_\varepsilon} \mathbb{D}_{y^\prime}\left[\tilde v_\varepsilon\right]+\frac{1}{\varepsilon}\partial_{y_3}
\left[\tilde v_\varepsilon\right]\right|dx'dy -\sqrt{2}g{1\over a_\varepsilon}\int_{\omega\times Y}\left|\frac{1}{a_\varepsilon} \mathbb{D}_{y^\prime}\left[\hat u_\varepsilon\right]+\frac{1}{\varepsilon}\partial_{y_3}\left[\hat u_\varepsilon\right]\right|dx'dy+O_\varepsilon \\
&&\ge\int_{\omega\times Y}f'\cdot \tilde v'\,dx^\prime dy-{1\over a_\varepsilon^2}\int_{\omega\times Y}f'\cdot \hat u_\varepsilon'\,dx^\prime dy+\int_{\omega\times Y} \hat{P}_\varepsilon\, {\rm div}_{x^\prime} \tilde v^\prime\,dx^{\prime}dy+\frac{1}{a_\varepsilon}\int_{\omega\times Y} \hat{P}_\varepsilon\, {\rm div}_{y^\prime} \tilde v^\prime\,dx^{\prime}dy \nonumber\\
&&+{\lambda\over a_\varepsilon}\int_{\omega\times Y}\hat{P}_\varepsilon\,\partial_{y_3}\tilde v_3\,dx^{\prime}dy+O_\varepsilon.\nonumber
\end{eqnarray}
%%%%%%%%%%%%%%%

According with (\ref{convUgorro1}), the first term in relation (\ref{formvarcvuprime}) can be written 
\begin{equation*}
2\mu\int_{\omega\times Y}\left(\frac{1}{a_\varepsilon^2}\mathbb{D}_{y^\prime} \left[\hat{u}_\varepsilon \right] +{a_\varepsilon \over \varepsilon}\frac{1}{a_\varepsilon^2}\partial_{y_3}\left[\hat{u}_\varepsilon \right]\right):\left(\mathbb{D}_{y^\prime}\left[\tilde v_\varepsilon\right]+\frac{a_\varepsilon}{\varepsilon}\partial_{y_3}\left[\tilde v_\varepsilon\right]\right)dx^{\prime}dy,
\end{equation*}
and, taking into account that $\lambda\,\varepsilon/a_\varepsilon\to 1$, this term tends to the following limit
\begin{equation}\label{limit1}
2\mu\int_{\omega\times Y}\left(\mathbb{D}_{y^\prime} \left[\hat{u} \right] +\lambda\partial_{y_3}\left[\hat{u} \right]\right):\left(\mathbb{D}_{y^\prime}\left[\tilde v\right]+\lambda\partial_{y_3}\left[\tilde v\right]\right)dx^{\prime}dy.
\end{equation}
The second term in relation (\ref{formvarcvuprime}) writes
\begin{equation*}
2\mu\int_{\omega\times Y}\left(\frac{1}{a_\varepsilon^2}\mathbb{D}_{y^\prime} \left[\hat{u}_\varepsilon \right] +
{a_\varepsilon \over \varepsilon}\frac{1}{a_\varepsilon^2}\partial_{y_3}\left[\hat{u}_\varepsilon \right]\right):
 \left(\frac{1}{a_\varepsilon^2}\mathbb{D}_{y^\prime} \left[\hat{u}_\varepsilon \right] +
{a_\varepsilon \over \varepsilon}\frac{1}{a_\varepsilon^2}\partial_{y_3}\left[\hat{u}_\varepsilon \right]\right) dx^{\prime}dy,
\end{equation*}
and, taking into account that the function $B(\varphi) =| \varphi |$ is proper convex continuous and $\lambda\,\varepsilon/a_\varepsilon\to 1$, we get that the $\liminf_{\varepsilon \to 0}$ of  this second is greater or equal than 
\begin{equation}\label{limit2}
 2\mu\int_{\omega\times Y}\left(\mathbb{D}_{y^\prime} \left[\hat{u} \right] +
\lambda\partial_{y_3}\left[\hat{u} \right]\right):\left(\mathbb{D}_{y^\prime}\left[\hat{u} \right]+
\lambda\partial_{y_3}\left[\hat{u} \right]\right)dx^{\prime}dy.
\end{equation}
In order to pass to the limit in the first nonlinear term, we have
\begin{eqnarray*}
&&\left|\sqrt{2}g \,a_\varepsilon \int_{\omega\times Y}\left|\mathbb{D}_{x^\prime}\left[\tilde v_\varepsilon\right]+
\frac{1}{a_\varepsilon} \mathbb{D}_{y^\prime}\left[\tilde v_\varepsilon\right]+\frac{1}{\varepsilon}\partial_{y_3}
\left[\tilde v_\varepsilon\right]\right|dx'dy-\sqrt{2}g \int_{\omega\times Y}\left| \mathbb{D}_{y^\prime}
\left[\tilde v\right]+\lambda\partial_{y_3}\left[\tilde v\right]\right|dx'dy\right|\\
&&\leq \sqrt{2}g\int_{\omega\times Y}\left|a_\varepsilon  \mathbb{D}_{x^\prime}
\left[\tilde v_\varepsilon\right]+ \mathbb{D}_{y^\prime}\left[\tilde v_\varepsilon\right]+\frac{a_\varepsilon}{\varepsilon}
\partial_{y_3}\left[\tilde v_\varepsilon\right]-\mathbb{D}_{y^\prime}\left[\tilde v\right]-\lambda\partial_{y_3}\left[\tilde v\right]\right|dx'dy\\
&&\leq \sqrt{2}g\int_{\omega\times Y}\left|a_\varepsilon  \mathbb{D}_{x^\prime}\left[\tilde v_\varepsilon\right] \right|dx'dy+
\sqrt{2}g\int_{\omega\times Y}\left| \mathbb{D}_{y^\prime}\left[\tilde v_\varepsilon\right]-\mathbb{D}_{y^\prime}\left[\tilde v\right]\right|dx'dy\nonumber\\
&&+\sqrt{2}g\int_{\omega\times Y}\left|\frac{a_\varepsilon}{\varepsilon}\partial_{y_3}\left[\tilde v_\varepsilon\right]-\lambda \partial_{y_3}\left[\tilde v\right]\right|dx'dy \to 0, \text{ as } \varepsilon\to 0,
\end{eqnarray*}
and we can deduce that the first nonlinear term tends to the following limit
\begin{equation}\label{limit3}
\sqrt{2}g \int_{\omega\times Y}\left| \mathbb{D}_{y^\prime}\left[\tilde v\right]+
\lambda\partial_{y_3}\left[\tilde v\right]\right|dx'dy.
\end{equation}
Now, in order to pass the limit in the second nonlinear term, we are taking into account that
\begin{eqnarray*}
\sqrt{2}g{1 \over a_\varepsilon}\int_{\omega\times Y}\left|\frac{1}{a_\varepsilon}
\mathbb{D}_{y^\prime}\left[\hat u_\varepsilon\right]+\frac{1}{\varepsilon}\partial_{y_3}\left[\hat u_\varepsilon\right]\right|dx'dy
=\sqrt{2}g \int_{\omega\times Y}\left| \frac{1}{a_\varepsilon^2} \mathbb{D}_{y^\prime}
\left[{\hat u_\varepsilon}\right]+\frac{a_\varepsilon}{\varepsilon} \frac{1}{a_\varepsilon^2}\partial_{y_3}\left[{\hat u_\varepsilon}\right]\right|dx'dy,
\end{eqnarray*}
and using (\ref{convUgorro1}) and the fact that the function $E(\varphi)=|\varphi|$ is proper convex continuous, we can deduce that
\begin{eqnarray}\label{limit4}
\liminf_{\varepsilon \to 0} \sqrt{2}g{1 \over a_\varepsilon}\int_{\omega\times Y}
\left|\frac{1}{a_\varepsilon} \mathbb{D}_{y^\prime}\left[\hat u_\varepsilon\right]+\frac{1}{\varepsilon}\partial_{y_3}
\left[\hat u_\varepsilon\right]\right|dx'dy\ge 
\sqrt{2}g \int_{\omega\times Y}\left| \mathbb{D}_{y^\prime}\left[\hat {u}\right]+
\lambda\partial_{y_3}\left[\hat {u}\right]\right|dx'dy.
\end{eqnarray}
Moreover, using (\ref{convUgorro1}) the two first terms in the right hand side of (\ref{formvarcvuprime}) tend to the following limit
\begin{equation}\label{limit5}
\int_{\omega\times Y}f'\cdot \left( \tilde v'- \hat{u}' \right )\,dx^\prime dy.
\end{equation}
We consider now the terms which involve the pressure. Taking into account the convergence of the pressure (\ref{convUgorro1}) the first term of the pressure tends
to the following limit
$
\int_{\omega\times Y} \hat{P}\, {\rm div}_{x^\prime} \tilde v^\prime\,dx^{\prime}dy,
$
and using (\ref{divmacro1}) and taking into account that $\hat P$ does not depend on $y$, we have
\begin{eqnarray}\label{limit6}
\int_{\omega\times Y} \hat{P}\, {\rm div}_{x^\prime} \tilde v^\prime\,dx^{\prime}dy&=&\int_{\omega\times Y} \hat{P}\, {\rm div}_{x^\prime} \tilde v^\prime\,dx^{\prime}dy-\int_{\omega}\hat P \left({\rm div}_{x^\prime}\int_{Y}\hat u'dy \right)dx' =-\int_{\omega \times Y}\nabla_{x'}\hat P\,(\tilde v'-\hat u')dx'dy.
\end{eqnarray}
Finally, using that ${\rm div}_{\lambda}\tilde v=0$, we have
\begin{equation}\label{limit7}
\frac{1}{a_\varepsilon}\int_{\omega\times Y} \hat{P}_\varepsilon\, {\rm div}_{y^\prime} \tilde v^\prime\,dx^{\prime}dy+\frac{\lambda}{a_\varepsilon}\int_{\omega\times Y}\hat{P}_\varepsilon\,\partial_{y_3}\tilde v_3\,dx^{\prime}dy=0.
\end{equation}
Therefore, taking into account (\ref{limit1})-(\ref{limit7}),  we have (\ref{limit_critical}).
\end{proof}

\subsection{Subcritical case $a_{\varepsilon}\ll \varepsilon$ ($\lambda=0$)}
We obtain some compactness results about the behavior of the sequences $(\tilde u_\varepsilon, \tilde P_\varepsilon)$ 
and $(\hat u_\varepsilon, \hat P_\varepsilon)$ satisfying the {\it a priori} estimates given in Lemmas \ref{Lemma_estimate}-i) and \ref{estCV}-i), respectively. 
\begin{lemma}[Subcritical case]\label{Convergence_subcritical}
For a subsequence of $\varepsilon$ still denoted by $\varepsilon$, there exist $\tilde{u}\in (L^2(\Omega))^3$,
where $\tilde u_3=0$ and $\tilde{u}=0$ on $y_3=\{0,1\}$, $\hat{u}\in L^2(\Omega;H^1_{\sharp}(Y')^3)$ (``$\sharp$'' denotes $Y'$-periodicity), with $\hat{u}=0$ in $\omega\times Y_s$ and $\hat u=0$ on $y_3=\{0,1\}$ such that $\int_{Y}\hat u(x',y)dy=\int_0^1 \tilde u(x',y_3)dy_3$ with $\int_Y \hat u_3dy=0$ and $\hat u_3$ independent of $y_3$, and $\hat{P}\in L^{2}_0(\omega \times Y)$, independent of $y$, such that
\begin{equation}\label{convtilde2}
{\tilde{u}_{\varepsilon} \over a_\varepsilon^2}\rightharpoonup (\tilde{u}^\prime,0)\text{\ in \ }(L^2(\Omega))^3,
\end{equation}
\begin{equation}\label{convUgorro2}
{\hat{u}_{\varepsilon}\over a_\varepsilon^2}\rightharpoonup \hat{u}\text{\ in \ }L^2(\Omega;H^1(Y')^3),\quad \hat{P}_{\varepsilon}\rightharpoonup \hat{P}\text{\ in \ }L^{2}_0(\omega \times Y),
\end{equation}
\begin{equation}\label{divmacro_tilde2}
{\rm div}_{x^{\prime}}\left( \int_0^1 \tilde{u}^{\prime}(x^{\prime},y_3)dy_3 \right)=0 \text{\ in \ }\omega,\quad \left( \int_0^1 \tilde{u}^{\prime}(x^{\prime},y_3)dy_3 \right)\cdot n=0\text{\ on \ }\partial \omega,
\end{equation}
\begin{equation}\label{divmacro2}
{\rm div}_{y'}\hat{u}'=0 \text{\ in \ }\omega\times Y,\quad {\rm div}_{x^{\prime}}\left( \int_Y \hat{u}^{\prime}(x^{\prime},y)dy \right)=0 \text{\ in \ }\omega,\quad \left( \int_Y \hat{u}^{\prime}(x^{\prime},y)dy \right)\cdot n=0\text{\ on \ }\partial \omega.
\end{equation}
\end{lemma}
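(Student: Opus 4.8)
The plan is to follow the two-step scheme of Lemma~\ref{Convergence_critical}: first extract weak limits from the \emph{a priori} bounds and identify their structure by transferring the corresponding compactness statements of \cite{Anguiano_SuarezGrau}, and then establish the only genuinely new point, namely that $\hat P$ does not depend on $y$.

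For the first step, Lemmas~\ref{Lemma_estimate}-i) and \ref{estCV}-i) show that $\tilde u_\varepsilon/a_\varepsilon^2$, $\hat u_\varepsilon/a_\varepsilon^2$ and $\hat P_\varepsilon$ are bounded in the spaces entering \eqref{convtilde2}--\eqref{convUgorro2}, so along a subsequence these weak convergences hold. The description of the limits $\tilde u$ and $\hat u$ (in particular the link $\int_Y\hat u\,dy=\int_0^1\tilde u\,dy_3$, the vanishing on $\omega\times Y_s$ and on $y_3\in\{0,1\}$, and the fact that $\hat u_3$ is independent of $y_3$), together with the incompressibility relations \eqref{divmacro_tilde2}--\eqref{divmacro2}, are obtained exactly as in Lemma~\ref{Convergence_critical} from the corresponding subcritical compactness results of \cite{Anguiano_SuarezGrau}, which rely only on the estimates and on the unfolding of Remark~\ref{remarkCV}, not on the functional $j_\varepsilon$. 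What distinguishes this regime from the critical one is that, since $a_\varepsilon/\varepsilon\to0$, the bound $\|\partial_{y_3}[\hat u_\varepsilon]\|_{(L^2(\omega\times Y))^3}\le C\varepsilon a_\varepsilon$ only gives $\|\partial_{y_3}[\hat u_\varepsilon/a_\varepsilon^2]\|_{(L^2(\omega\times Y))^3}\le C\varepsilon/a_\varepsilon\to+\infty$, so the limit keeps $H^1$-regularity in the horizontal microscopic variable $y'$ alone, whence $\hat u\in L^2(\Omega;H^1_\sharp(Y')^3)$; passing to the limit in the unfolded incompressibility identity ${\rm div}_{y'}\hat u_\varepsilon'+(a_\varepsilon/\varepsilon)\,\partial_{y_3}\hat u_{\varepsilon,3}=0$ divided by $a_\varepsilon^2$ (the vertical term disappears because $a_\varepsilon/\varepsilon\to0$) yields ${\rm div}_{y'}\hat u'=0$, and $\hat u_3\equiv0$ since it is $y_3$-independent and zero on $y_3\in\{0,1\}$.

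For the second step we mimic the pressure argument of Lemma~\ref{Convergence_critical}. Pick $\tilde v(x',y)\in\mathcal{D}(\omega;C_\sharp^\infty(Y)^3)$ with $\tilde v=0$ on $\omega\times Y_s$, so that $\tilde v(x',x'/a_\varepsilon,y_3)\in(H_0^1(\widetilde{\Omega}_\varepsilon))^3$, insert successively $\pm a_\varepsilon\tilde v(x',x'/a_\varepsilon,y_3)$ in \eqref{extension_problem}, use ${\rm div}_\varepsilon\tilde u_\varepsilon=0$, and unfold by Remark~\ref{remarkCV} and Lemma~\ref{estCV}; one reaches an inequality analogous to \eqref{problem2_pressure1}, now with $a_\varepsilon/\varepsilon\to0$. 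Since $g(\varepsilon)=g\,a_\varepsilon$ and $a_\varepsilon/\varepsilon\to0$, all viscous and forcing terms are of order $a_\varepsilon$ or $a_\varepsilon^2$, the test-function Bingham term is of order $a_\varepsilon$, the term $\sqrt2\,g\,a_\varepsilon\int_{\omega\times Y}|\tfrac1{a_\varepsilon}\mathbb{D}_{y'}[\hat u_\varepsilon]+\tfrac1\varepsilon\partial_{y_3}[\hat u_\varepsilon]|\,dx'dy$ is nonnegative with $\liminf\ge0$ (one argues, as for \eqref{limit3_pressure1}, from the convexity and lower semicontinuity of $\varphi\mapsto|\varphi|$ and from \eqref{convUgorro2}), and among the pressure terms $a_\varepsilon\int\hat P_\varepsilon\,{\rm div}_{x'}\tilde v'$ and $(a_\varepsilon/\varepsilon)\int\hat P_\varepsilon\,\partial_{y_3}\tilde v_3$ vanish while $\int\hat P_\varepsilon\,{\rm div}_{y'}\tilde v'$ converges to $\int\hat P\,{\rm div}_{y'}\tilde v'$; passing to the limit in both inequalities leaves $\int_{\omega\times Y}\hat P\,{\rm div}_{y'}\tilde v'\,dx'dy=0$, i.e.\ $\nabla_{y'}\hat P=0$. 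Contrary to the critical case, where $\lambda\neq0$ couples the two pieces, this test function does not detect $\partial_{y_3}\hat P$, so we repeat the computation with the vertically rescaled test function whose only nonzero component is $\varepsilon\,\tilde v_3(x',x'/a_\varepsilon,y_3)$: for it ${\rm div}_\varepsilon$ equals $\partial_{y_3}\tilde v_3$ while $\mathbb{D}_\varepsilon$ is of order $\varepsilon/a_\varepsilon$ in $L^2$, so that $\|\mathbb{D}_\varepsilon[\tilde u_\varepsilon]\|_{(L^2(\widetilde{\Omega}_\varepsilon))^{3\times3}}\le Ca_\varepsilon/\mu$ makes the viscous term of order $\varepsilon$ and, with $g(\varepsilon)=g\,a_\varepsilon$, the Bingham terms of order $\varepsilon$ or nonnegative; passing to the limit gives $\int_{\omega\times Y}\hat P\,\partial_{y_3}\tilde v_3\,dx'dy=0$, hence $\partial_{y_3}\hat P=0$, and altogether $\hat P=\hat P(x')$. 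Alternatively, the bound $\|\nabla_\varepsilon\tilde P_\varepsilon\|_{(H^{-1}(\Omega))^3}\le C$ established in the proof of \eqref{esti_P} gives $\|\partial_{y_3}\tilde P_\varepsilon\|_{H^{-1}(\Omega)}\le C\varepsilon\to0$ and $\|\nabla_{x'}\tilde P_\varepsilon\|_{H^{-1}(\Omega)}\le C$, which after unfolding force $\nabla_y\hat P=0$.

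The hard point is the bookkeeping of the three competing scales $a_\varepsilon$, $\varepsilon$ and $a_\varepsilon/\varepsilon$ in the unfolded inequality, and in particular realizing that in the subcritical regime the ``diagonal'' test function only produces $\nabla_{y'}\hat P=0$, so a second, vertically rescaled, family of test functions is needed to reach $\partial_{y_3}\hat P=0$; one must also check that the extra factor $\varepsilon/a_\varepsilon$ carried by $\mathbb{D}_\varepsilon$ of that test function does not destroy the decay of the viscous and Bingham contributions, which it does not since it is beaten by $\|\mathbb{D}_\varepsilon[\tilde u_\varepsilon]\|_{(L^2(\widetilde{\Omega}_\varepsilon))^{3\times3}}\le Ca_\varepsilon/\mu$. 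The nonlinear terms are otherwise harmless: with $g(\varepsilon)=g\,a_\varepsilon$ they are of strictly lower order, and wherever a $j_\varepsilon$-type term appears with the unfavourable sign one only needs $j_\varepsilon\ge0$ rather than its limit.
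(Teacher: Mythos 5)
Your proposal is correct and follows essentially the same route as the paper: the convergences \eqref{convtilde2}--\eqref{divmacro2} and the structural properties of $\tilde u$, $\hat u$ are imported from the subcritical compactness lemmas of \cite{Anguiano_SuarezGrau}, and the $y$-independence of $\hat P$ is obtained exactly as in the paper's proof, first with the test functions $\pm a_\varepsilon\tilde v(x',x'/a_\varepsilon,y_3)$ (where, since $a_\varepsilon/\varepsilon\to 0$, only the ${\rm div}_{y'}$ pressure term survives, giving $\nabla_{y'}\hat P=0$) and then with the vertical test functions $\pm\varepsilon\,(0,0,\tilde v_3(x',x'/a_\varepsilon,y_3))$ (giving $\partial_{y_3}\hat P=0$), with the same scale bookkeeping and the same $\liminf\ge 0$ treatment of the unfavourable Bingham term. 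The only blemish is your parenthetical re-derivation of ${\rm div}_{y'}\hat u'=0$: after dividing the unfolded incompressibility identity by $a_\varepsilon^2$, the vertical term $(a_\varepsilon/\varepsilon)\,\partial_{y_3}(\hat u_{\varepsilon,3}/a_\varepsilon^2)$ is only \emph{bounded} in $L^2$ by Lemma \ref{estCV}-i), not vanishing, so this relation must indeed be taken from \cite{Anguiano_SuarezGrau} (which you also cite, exactly as the paper does); likewise your closing ``alternative'' pressure argument via $\left\Vert\nabla_\varepsilon\tilde P_\varepsilon\right\Vert_{(H^{-1}(\Omega))^3}\le C$ is only a heuristic and does not by itself yield the $y'$-independence of the unfolded limit.
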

\begin{proof}
See Lemmas 5.2, 5.3 and 5.4 in \cite{Anguiano_SuarezGrau} for the proof of (\ref{convtilde2})-(\ref{divmacro2}). 
In order to prove that $\hat P$ does not depend on $y'$ we argue as in the proof of Lemma \ref{Convergence_critical} 
using that $a_{\varepsilon}\ll \varepsilon$, and we obtain $\int_{\omega\times Y} \hat{P}\,
{\rm div}_{y'} \tilde v'\,dx^{\prime}dy=0,$ which shows that $\hat P$ does not depend on $y'$.
Now, in order to prove that $\hat P$ does not depend on $y_3$, setting
$\varepsilon \tilde v=\varepsilon(0, \tilde v_3(x',x'/a_\varepsilon,y_3))$ in (\ref{extension_problem}) (we recall that $g(\varepsilon)= g\, a_\varepsilon)$) and using that ${\rm div}_\varepsilon \tilde u_\varepsilon=0$, we have
\begin{eqnarray}\label{problem_pressure2}
&&2\mu \varepsilon\int_{\Omega}\mathbb{D}_\varepsilon \left[\tilde{u}_\varepsilon \right]:\left(\mathbb{D}_{x^\prime}
\left[\tilde v\right]+\frac{1}{a_\varepsilon} \mathbb{D}_{y^\prime}\left[\tilde v\right]+\frac{1}{\varepsilon}\partial_{y_3}\left[\tilde v\right]\right) dx^{\prime}dy_3-2\mu\int_{\Omega}|\mathbb{D}_\varepsilon \left[\tilde{u}_\varepsilon \right]|^2dx'dy_3 \\
&&+\sqrt{2}ga_\varepsilon \, \varepsilon  \int_{\Omega}\left|\mathbb{D}_{x^\prime}\left[\tilde v\right]+\frac{1}{a_\varepsilon} 
\mathbb{D}_{y^\prime}\left[\tilde v\right]+\frac{1}{\varepsilon}\partial_{y_3}\left[\tilde v\right]\right|dx'dy_3-\sqrt{2}
g a_\varepsilon\int_{\Omega}|\mathbb{D}_{\varepsilon}[\tilde u_\varepsilon]|dx'dy_3 \nonumber\\
&&\ge -\int_{\Omega}f'\cdot\tilde{u}'_{\varepsilon}\,dx^{\prime}dy_3+\int_{\Omega}\tilde{P}_\varepsilon\,\partial_{y_3}\tilde v_3\,dx^{\prime}dy_3. \nonumber
\end{eqnarray}
Applying the change of variables given in Remark \ref{remarkCV} to relation (\ref{problem_pressure2}) and taking into account (\ref{term1_CV})-(\ref{term2_CV}), we obtain
\begin{eqnarray}\label{problem2_pressure2}
&&2\mu \varepsilon\int_{\omega\times Y}\left(\frac{1}{a_\varepsilon}\mathbb{D}_{y^\prime} \left[\hat{u}_\varepsilon \right] +\frac{1}{\varepsilon}\partial_{y_3}\left[\hat{u}_\varepsilon \right]\right):\left(\frac{1}{a_\varepsilon} \mathbb{D}_{y^\prime}\left[\tilde v\right]+\frac{1}{\varepsilon}\partial_{y_3}\left[\tilde v\right]\right)dx^{\prime}dy \\
&&+\sqrt{2}ga_\varepsilon \,\varepsilon\int_{\omega\times Y}\left|\mathbb{D}_{x^\prime}\left[\tilde v\right]+
\frac{1}{a_\varepsilon} \mathbb{D}_{y^\prime}\left[\tilde v\right]+
\frac{1}{\varepsilon}\partial_{y_3}\left[\tilde v\right]\right|dx'dy-\sqrt{2}g a_\varepsilon \int_{\omega\times Y}\left|\frac{1}{a_\varepsilon} \mathbb{D}_{y^\prime}\left[\hat u_\varepsilon\right]+\frac{1}{\varepsilon}\partial_{y_3}\left[\hat u_\varepsilon\right]\right|dx'dy+O_\varepsilon \nonumber \\
&&\ge -\int_{\omega\times Y}f'\cdot \hat u_\varepsilon'\,dx^\prime dy+\int_{\omega\times Y}\hat{P}_\varepsilon\,\partial_{y_3}\tilde v_3\,dx^{\prime}dy+O_\varepsilon.\nonumber
\end{eqnarray}
According with (\ref{convUgorro2}) and using that $a_{\varepsilon}\ll \varepsilon$, the first term in relation (\ref{problem2_pressure2}) can be written by the following way
\begin{equation}\label{limit1_pressure2}
2\mu \varepsilon\int_{\omega\times Y}\left(\frac{1}{a_\varepsilon^2}\mathbb{D}_{y^\prime} \left[\hat{u}_\varepsilon \right] +{a_\varepsilon \over \varepsilon}\frac{1}{a_\varepsilon^2}\partial_{y_3}\left[\hat{u}_\varepsilon \right]\right):\left(\mathbb{D}_{y^\prime}\left[\tilde v\right]+\frac{a_\varepsilon}{\varepsilon}\partial_{y_3}\left[\tilde v\right]\right)dx^{\prime}dy\to 0,  \text{ as } \varepsilon\to 0.
\end{equation}
In order to pass to the limit in the first nonlinear term, we have
\begin{eqnarray}\label{limit2_pressure2}
&&\sqrt{2}g \varepsilon \int_{\omega\times Y}\left|a_\varepsilon \mathbb{D}_{x^\prime}\left[\tilde v\right]+
\mathbb{D}_{y^\prime}\left[\tilde v\right]+\frac{a_\varepsilon}{\varepsilon}\partial_{y_3}\left[\tilde v\right]\right|dx'dy \to 0, \text{ as } \varepsilon\to 0.
\end{eqnarray}
In order to pass to the limit in the second nonlinear term, we proceed as in Lemma \ref{Convergence_critical}.
Moreover, using (\ref{convUgorro2}) the first term in the right hand side of (\ref{problem2_pressure2}) can be written by 
\begin{equation}\label{limit3_pressure2}
a_\varepsilon^2\int_{\omega\times Y}f'\cdot \frac{\hat u_\varepsilon'}{a_\varepsilon^2}\,dx^\prime dy\to 0, \text{ as } \varepsilon\to 0.
\end{equation}
We consider now the term which involves the pressure. Taking into account the convergence of the pressure (\ref{convUgorro2}), passing to the limit when $\varepsilon$ tends to zero, we have
\begin{equation}\label{limit4_pressure2}
\int_{\omega\times Y} \hat{P}\, \partial_{y_3} \tilde v_3\,dx^{\prime}dy.
\end{equation}
Therefore, taking into account (\ref{limit3_pressure1}) and (\ref{limit1_pressure2})-(\ref{limit4_pressure2}), when we pass to the limit in (\ref{problem2_pressure2}) when $\varepsilon$ tends to zero, we have $0\ge\int_{\omega\times Y} \hat{P}\, \partial_{y_3} \tilde v_3\,dx^{\prime}dy.$ Now, if we choose as test function $-\varepsilon \tilde v=-\varepsilon(0, \tilde v_3(x',x'/a_\varepsilon,y_3))$ in (\ref{extension_problem}) and we argue similarly, we can deduce that $\hat P$ does not depend on $y_3$, so $\hat P$ does not depend on $y$.
\end{proof}

\begin{theorem}[Subcritical case]\label{SubCriticalCase}
If $a_{\varepsilon}\ll \varepsilon$, then $(\hat u_\varepsilon/a_\varepsilon^2,\hat P_\varepsilon)$ converges to
$(\hat u,\hat P)$ in $L^2(\Omega;H^1(Y')^3)\times L^{2}_0(\omega \times Y)$, which satisfies the following variational 
inequality
\begin{eqnarray}\label{limit_subcritical}
&&2\mu\int_{\omega\times Y}\mathbb{D}_{y'} \left[\hat{u}' \right] :\left(\mathbb{D}_{y'}\left[\tilde v'\right] -
\mathbb{D}_{y'}\left[\hat u'\right] \right)dx^{\prime}dy+ \sqrt{2}g \int_{\omega\times Y}\left| \mathbb{D}_{y'}\left[\tilde v'\right]\right|dx'dy
- \sqrt{2}g\int_{\omega\times Y}\left| \mathbb{D}_{y'}\left[\hat u'\right]\right|dx'dy \nonumber\\
&&\ge\int_{\omega\times Y}f'\cdot \left(\tilde v' - \hat u' \right)\,dx^\prime dy-\int_{\omega\times Y}\nabla_{x'}\hat P\,
\left( \tilde v' - \hat u' \right) \,dx^\prime dy,
\end{eqnarray}
for every $\tilde v\in L^2(\Omega;H^1(Y')^3)$ such that 
$$\tilde v(x',y)=0 \text{ in } \omega\times Y_s,\quad {\rm div}_{y'}\tilde v'=0 \text{ in }\omega\times Y,\quad
\left( \int_Y \tilde v^{\prime}(x^{\prime},y)dy \right)\cdot n=0\text{\ on \ }\partial \omega.$$
\end{theorem}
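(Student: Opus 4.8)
The plan is to repeat, with $\lambda=0$, the argument used for Theorem~\ref{CriticalCase}, now relying on the compactness statements of Lemma~\ref{Convergence_subcritical} instead of Lemma~\ref{Convergence_critical}. By density it suffices to establish \eqref{limit_subcritical} when the test field is of the form $\tilde v=(\tilde v',0)$ with $\tilde v'(x',y)\in\mathcal{D}(\omega;C_\sharp^\infty(Y)^3)$, vanishing in $\omega\times Y_s$ and near $y_3\in\{0,1\}$ (so that, after extension by zero, $\tilde v(x',x'/a_\varepsilon,y_3)\in(H_0^1(\widetilde\Omega_\varepsilon))^3$), and satisfying the incompressibility conditions of \eqref{divmacro2}, i.e. ${\rm div}_{y'}\tilde v'=0$ in $\omega\times Y$ and $\big(\int_Y\tilde v'\,dy\big)\cdot n=0$ on $\partial\omega$; the general $\tilde v\in L^2(\Omega;H^1(Y')^3)$ then follows by density, also using that by Lemma~\ref{Convergence_subcritical} the limit velocity has $\hat u_3$ independent of $y_3$ with $\hat u=0$ on $y_3\in\{0,1\}$, whence $\hat u=(\hat u',0)$. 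First I would multiply \eqref{extension_problem} by $a_\varepsilon^{-2}$, use ${\rm div}_\varepsilon\tilde u_\varepsilon=0$ to cancel the pressure term carried by $\tilde u_\varepsilon$, and insert the admissible test function $a_\varepsilon^2\tilde v_\varepsilon$ with $\tilde v_\varepsilon=(\tilde v'(x',x'/a_\varepsilon,y_3),0)$; since ${\rm div}_{y'}\tilde v'=0$ one has ${\rm div}_\varepsilon(a_\varepsilon^2\tilde v_\varepsilon)=a_\varepsilon^2\,{\rm div}_{x'}\tilde v'(x',x'/a_\varepsilon,y_3)$, so that only a $\nabla_{x'}$-type pressure contribution survives in the limit.

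Then, applying the change of variables of Remark~\ref{remarkCV}, I would rewrite the $\varepsilon$-inequality over $\omega\times Y$ in terms of $\hat u_\varepsilon$ and $\hat P_\varepsilon$ exactly as in \eqref{formvarcvuprime}, and pass to the limit term by term using \eqref{convtilde2}--\eqref{divmacro2}. In the viscous bilinear term, each occurrence of $\tfrac1\varepsilon\partial_{y_3}[\hat u_\varepsilon]$ is reorganized, as in the passage from \eqref{formvarcvuprime} to \eqref{limit1}, as $\tfrac{a_\varepsilon}{\varepsilon}\cdot\tfrac{1}{\varepsilon a_\varepsilon}\partial_{y_3}[\hat u_\varepsilon]$, whose second factor stays bounded in $L^2$ by Lemma~\ref{estCV}-i); since $a_\varepsilon/\varepsilon\to0$, since $\mathbb{D}_{y'}[\hat u_\varepsilon/a_\varepsilon^2]\rightharpoonup\mathbb{D}_{y'}[\hat u']$ by \eqref{convUgorro2}, and since the $\partial_{y_3}$-block of a symmetric gradient is orthogonal to the $\mathbb{D}_{y'}$-block of the horizontal field $\tilde v_\varepsilon=(\tilde v',0)$ tested against, all vertical contributions drop out and this term tends to $2\mu\int_{\omega\times Y}\mathbb{D}_{y'}[\hat u']:\mathbb{D}_{y'}[\tilde v']\,dx'dy$. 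After unfolding, the quadratic viscous term $2\mu a_\varepsilon^{-2}\int_\Omega|\mathbb{D}_\varepsilon[\tilde u_\varepsilon]|^2$ equals $2\mu\int_{\omega\times Y}\big|\tfrac{1}{a_\varepsilon^2}\mathbb{D}_{y'}[\hat u_\varepsilon]+\tfrac{1}{\varepsilon a_\varepsilon}\partial_{y_3}[\hat u_\varepsilon]\big|^2$, so by weak lower semicontinuity of $\varphi\mapsto|\varphi|^2$ its $\liminf$ is $\ge 2\mu\int_{\omega\times Y}|\mathbb{D}_{y'}[\hat u']|^2\,dx'dy$. The $j$-term with $\tilde v_\varepsilon$ converges, as in \eqref{limit3}, to $\sqrt2\,g\int_{\omega\times Y}|\mathbb{D}_{y'}[\tilde v']|\,dx'dy$ (the residual $a_\varepsilon\mathbb{D}_{x'}$ and $\tfrac{a_\varepsilon}{\varepsilon}\partial_{y_3}$ pieces of the test gradient tending to $0$ in $L^1$), and, arguing as in \eqref{limit4} with \eqref{convUgorro2} and the fact that $\varphi\mapsto|\varphi|$ is proper, convex and continuous, the $\liminf$ of the $j$-term with $\hat u_\varepsilon$ is $\ge\sqrt2\,g\int_{\omega\times Y}|\mathbb{D}_{y'}[\hat u']|\,dx'dy$.

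The force term converges, by \eqref{convUgorro2}, to $\int_{\omega\times Y}f'\cdot(\tilde v'-\hat u')\,dx'dy$, and the pressure term converges to $\int_{\omega\times Y}\hat P\,{\rm div}_{x'}\tilde v'\,dx'dy$ (the $a_\varepsilon^{-1}{\rm div}_{y'}\tilde v'$ contribution vanishing by the choice of $\tilde v'$, and no $\partial_{y_3}$ pressure contribution being present); since $\hat P$ is independent of $y$ by Lemma~\ref{Convergence_subcritical} and ${\rm div}_{x'}\big(\int_Y\hat u'\,dy\big)=0$ in $\omega$ with $\big(\int_Y\hat u'\,dy\big)\cdot n=0$ on $\partial\omega$ by \eqref{divmacro2}, the same integration by parts as in \eqref{limit6} turns it into $-\int_{\omega\times Y}\nabla_{x'}\hat P\cdot(\tilde v'-\hat u')\,dx'dy$. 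Taking the $\limsup$ in the $\varepsilon$-inequality — whose right-hand side is convergent, while the quadratic viscous term and the $j$-term with $\hat u_\varepsilon$ enter with a minus sign and are therefore bounded from above in $\limsup$ by means of the above liminf inequalities — one obtains \eqref{limit_subcritical} for every test field of the chosen form, hence, by density, for every admissible $\tilde v$. Finally, since \eqref{limit_subcritical} determines $\hat u'$ uniquely (the pressure term disappearing, thanks to \eqref{divmacro2}, when the inequality is tested against a difference of two solutions, and Korn's inequality on the fluid cell providing coercivity), the limit does not depend on the extracted subsequence and the convergences hold along the whole sequence $\varepsilon\to0$.

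The step I expect to be the main obstacle is the treatment of the vertical derivatives in the subcritical regime. Because $a_\varepsilon\ll\varepsilon$, the quantity $\varepsilon^{-1}\partial_{y_3}[\hat u_\varepsilon/a_\varepsilon^2]$ is \emph{not} bounded in $L^2$, so one cannot simply extract a weak limit of the full scaled symmetric gradient; the computation has to be arranged so that $\partial_{y_3}[\hat u_\varepsilon]$ always appears with the compensating weight $(\varepsilon a_\varepsilon)^{-1}$ — which is controlled by Lemma~\ref{estCV}-i) — while the matching factor on the test side carries the vanishing weight $a_\varepsilon/\varepsilon$. This is exactly why it is convenient to take $\tilde v_\varepsilon$ with vanishing third component, which is legitimate precisely because in the subcritical case the incompressibility constraint reduces to ${\rm div}_{y'}\tilde v'=0$, decoupled from $\tilde v_3$, unlike the constraint ${\rm div}_\lambda\tilde v=0$ of the critical case. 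Once this bookkeeping is in place, together with the orthogonality of the $\partial_{y_3}$-block and the $\mathbb{D}_{y'}$-block just mentioned, all vertical contributions disappear in the limit and the rest is a routine transcription of the proof of Theorem~\ref{CriticalCase} with $\lambda$ set to $0$.
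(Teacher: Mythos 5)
Your proposal is correct and follows essentially the same route as the paper's proof: multiply \eqref{extension_problem} by $a_\varepsilon^{-2}$, test with $a_\varepsilon^{2}$ times a rescaled smooth field vanishing on $\omega\times Y_s$ and satisfying the subcritical incompressibility conditions \eqref{divmacro2}, unfold via Remark \ref{remarkCV} and the bounds of Lemma \ref{estCV}, pass to the limit with the convergences of Lemma \ref{Convergence_subcritical} and convexity/weak lower semicontinuity for the terms carrying $\hat u_\varepsilon$, and treat the pressure through the $y$-independence of $\hat P$ and the integration by parts of \eqref{limit6}, with ${\rm div}_{y'}\tilde v'=0$ killing the $a_\varepsilon^{-1}$ term. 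The only deviations are minor: the paper takes $\tilde v_3$ independent of $y_3$ whereas you take $\tilde v_3=0$ (legitimate, since \eqref{limit_subcritical} does not involve $\tilde v_3$), and your explicit block-orthogonality remark together with $\hat u_3=0$ is precisely what is also needed (and left implicit in the paper) to turn the weak lower semicontinuity bounds for the quadratic viscous term and the $j$-term in $\hat u_\varepsilon$ into bounds by $\int_{\omega\times Y}|\mathbb{D}_{y'}[\hat u']|^2\,dx'dy$ and $\int_{\omega\times Y}|\mathbb{D}_{y'}[\hat u']|\,dx'dy$, since the unidentified weak limit of $(\varepsilon a_\varepsilon)^{-1}\partial_{y_3}[\hat u_\varepsilon]$ lives in the vertical block only.
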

\begin{proof}
We choose a test function $\tilde v(x^{\prime},y)\in \mathcal{D}(\omega;C_{\sharp}^{\infty}(Y)^3)$ with $\tilde v(x^{\prime},y)=0\in \omega \times Y_s$ (thus, we have that $\tilde v(x^{\prime},x^{\prime}/a_{\varepsilon},y_3)\in (H_0^{1}(\widetilde{\Omega}_{\varepsilon}))^3$). 
We first multiply (\ref{extension_problem}) by $a_\varepsilon^{-2}$ and we use that ${\rm div}_\varepsilon \tilde u_\varepsilon=0$. Then, we take a test function $a_\varepsilon^{2} \tilde v(x',x'/a_\varepsilon, y_3)$, with $\tilde v_3$ independent of $y_3$ and with $\tilde v(x',y)=0$ in $\omega\times Y_s$ and satisfying the incompressibility  conditions (\ref{divmacro2}), that is, ${\rm div}_{y'}\tilde v'=0$ in $\omega\times Y$ and $\left( \int_Y \tilde v^{\prime}(x^{\prime},y)dy \right)\cdot n=0$ on $\partial \omega$, and we have
\begin{eqnarray}\label{extension_problem2_subcritical}
&&2\mu\int_{\Omega}\mathbb{D}_\varepsilon \left[\tilde{u}_\varepsilon \right] :\left(\mathbb{D}_{x^\prime}\left[\tilde v\right]+\frac{1}{a_\varepsilon} \mathbb{D}_{y^\prime}\left[\tilde v\right]+\frac{1}{\varepsilon}\partial_{y_3}\left[\tilde v\right]\right) dx^{\prime}dy_3-2\mu {1\over a_\varepsilon^2}\int_{\Omega}|\mathbb{D}_\varepsilon \left[\tilde{u}_\varepsilon \right]|^2dx'dy_3 \\
&&+\sqrt{2}g\,a_\varepsilon \int_{\Omega}\left|\mathbb{D}_{x^\prime}\left[\tilde v\right]+
\frac{1}{a_\varepsilon} \mathbb{D}_{y^\prime}\left[\tilde v\right]+\frac{1}{\varepsilon}\partial_{y_3}
\left[\tilde v\right]\right|dx'dy_3-\sqrt{2}g{1\over a_\varepsilon}\int_{\Omega}|\mathbb{D}_{\varepsilon}[\tilde u_\varepsilon]|dx'dy_3 \nonumber\\
&&\ge\int_{\Omega}f'\cdot\tilde v'\,dx^{\prime}dy_3-{1\over a_\varepsilon^2}\int_{\Omega}f'\cdot\tilde{u}'_{\varepsilon}\,dx^{\prime}dy_3+\int_{\Omega} \tilde{P}_\varepsilon\, {\rm div}_{x^\prime} \tilde v^\prime\,dx^{\prime}dy_3+\frac{1}{a_\varepsilon}\int_{\Omega} \tilde{P}_\varepsilon\, {\rm div}_{y^\prime} \tilde v^\prime\,dx^{\prime}dy_3. \nonumber
\end{eqnarray}
Applying the change of variables given in Remark \ref{remarkCV} to relation (\ref{extension_problem2_subcritical}) and taking into account (\ref{term1_CV}), (\ref{term2_CV}) and (\ref{term3_CV}), we obtain
\begin{eqnarray}\label{formvarcvuprime2}
&&2\mu\int_{\omega\times Y}\left(\frac{1}{a_\varepsilon}\mathbb{D}_{y^\prime} \left[\hat{u}_\varepsilon \right] +
\frac{1}{\varepsilon}\partial_{y_3}\left[\hat{u}_\varepsilon \right]\right):\left(\frac{1}{a_\varepsilon} 
\mathbb{D}_{y^\prime}\left[\tilde v\right]+\frac{1}{\varepsilon}\partial_{y_3}\left[\tilde v\right]\right)dx^{\prime}dy \\
&&-2\mu {1\over a_\varepsilon^2}\int_{\omega\times Y}\left|\frac{1}{a_\varepsilon} \mathbb{D}_{y^\prime}\left[\hat u_\varepsilon\right]+\frac{1}{\varepsilon}
\partial_{y_3}\left[\hat u_\varepsilon\right]\right|^2dx'dy+\sqrt{2}g \,a_\varepsilon\int_{\omega\times Y}\left|\mathbb{D}_{x^\prime}\left[\tilde v\right]+\frac{1}{a_\varepsilon} \mathbb{D}_{y^\prime}\left[\tilde v\right]+\frac{1}{\varepsilon}\partial_{y_3}\left[\tilde v\right]\right|dx'dy \nonumber\\
&&-\sqrt{2}g{1\over a_\varepsilon}\int_{\omega\times Y}\left|\frac{1}{a_\varepsilon} \mathbb{D}_{y^\prime}\left[\hat u_\varepsilon\right]+\frac{1}{\varepsilon}\partial_{y_3}\left[\hat u_\varepsilon\right]\right|dx'dy+O_\varepsilon \nonumber \\
&&\ge\int_{\omega\times Y}f'\cdot \tilde v'\,dx^\prime dy-{1\over a_\varepsilon^2}\int_{\omega\times Y}f'\cdot \hat u_\varepsilon'\,dx^\prime dy+\int_{\omega\times Y} \hat{P}_\varepsilon\, {\rm div}_{x^\prime} \tilde v^\prime\,dx^{\prime}dy+\frac{1}{a_\varepsilon}\int_{\omega\times Y} \hat{P}_\varepsilon\, {\rm div}_{y^\prime} \tilde v^\prime\,dx^{\prime}dy+O_\varepsilon. \nonumber
\end{eqnarray}
%%%%%%%%%%%%%%%%%%
In the left-hand side, we only  give the details of convergence for the first nonlinear term, the most challenging one. 
\begin{eqnarray*}
&&\left|\sqrt{2}g \,a_\varepsilon \int_{\omega\times Y}\left|\mathbb{D}_{x^\prime}\left[\tilde v\right]+
\frac{1}{a_\varepsilon} \mathbb{D}_{y^\prime}\left[\tilde v\right]+\frac{1}{\varepsilon}\partial_{y_3}
\left[\tilde v\right]\right|dx'dy-\sqrt{2}g \int_{\omega\times Y}\left| \mathbb{D}_{y^\prime}
\left[\tilde v\right]\right|dx'dy \right|\\
&&\leq \sqrt{2}g\int_{\omega\times Y}\left|a_\varepsilon  \mathbb{D}_{x^\prime}
\left[\tilde v\right]+ \mathbb{D}_{y^\prime}\left[\tilde v\right]+\frac{a_\varepsilon}{\varepsilon}
\partial_{y_3}\left[\tilde v\right]-\mathbb{D}_{y^\prime}\left[\tilde v\right]\right|dx'dy\\
&&\leq \sqrt{2}g\int_{\omega\times Y}\left|a_\varepsilon  \mathbb{D}_{x^\prime}\left[\tilde v\right] \right|dx'dy+\sqrt{2}g\int_{\omega\times Y}\left|\frac{a_\varepsilon}{\varepsilon}\partial_{y_3}\left[\tilde v\right]
\right|dx'dy \to 0, \text{ as } \varepsilon\to 0.
\end{eqnarray*}

Using (\ref{convUgorro2}) the two first terms in the right hand side of (\ref{formvarcvuprime2}) tend to the following limit
\begin{equation*}\label{limit5_case2}
\int_{\omega\times Y}f'\cdot (\tilde v'-\hat u')\,dx^\prime dy.
\end{equation*}
We consider now the terms which involve the pressure. Taking into account the convergence of the pressure (\ref{convUgorro2})
the first term of the pressure tends to the following limit
$
\int_{\omega\times Y} \hat{P}\, {\rm div}_{x^\prime} \tilde v^\prime\,dx^{\prime}dy,
$
and using (\ref{divmacro2}) and taking into account that $\hat P$ does not depend on $y$, we have (\ref{limit6}).
Finally, using that ${\rm div}_{y'}\tilde v'=0$, we have
\begin{equation}\label{limit7_case2}
\frac{1}{a_\varepsilon}\int_{\omega\times Y} \hat{P}_\varepsilon\, {\rm div}_{y^\prime} \tilde v^\prime\,dx^{\prime}dy=0.
\end{equation}
It is straightforward to obtain that $\hat u_3=0$ and therefore we get 
 (\ref{limit_subcritical}).
\end{proof}

\subsection{Supercritical case $a_{\varepsilon}\gg \varepsilon$ ($\lambda=+\infty$)}
We obtain some compactness results about the behavior of the sequences $(\tilde u_\varepsilon, \tilde P_\varepsilon)$ and $(\hat u_\varepsilon, \hat P_\varepsilon)$ satisfying the {\it a priori} estimates given in Lemmas \ref{Lemma_estimate}-ii) and \ref{estCV}-ii), respectively. 
\begin{lemma}[Supercritical case]\label{Convergence_supercritical}
For a subsequence of $\varepsilon$ still denote by $\varepsilon$, there exist $\tilde{u}\in H^1(0,1;L^2(\omega)^3)$,
where $\tilde u_3=0$ and $\tilde{u}=0$ on $y_3=\{0,1\}$, $\hat{u}\in H^1(0,1;L^2_{\sharp}(\omega\times Y')^3)$ (``$\sharp$'' denotes $Y'$-periodicity), 
with $\hat{u}=0$ in $\omega\times Y_s$, $\hat u=0$ on $y_3=\{0,1\}$ such that $\int_{Y}\hat u(x',y)dy=\int_0^1 \tilde u(x',y_3)dy_3$ with $\int_Y \hat u_3dy=0$ and $\hat u_3$ independent of $y_3$, and $\hat{P}\in L^{2}_0(\omega \times Y)$, independent of $y$, such that
\begin{equation}\label{convtilde3}
{\tilde{u}_{\varepsilon} \over \varepsilon^2}\rightharpoonup (\tilde{u}^\prime,0)\text{\ in \ }H^1(0,1;L^2(\omega)^3),
\end{equation}
\begin{equation}\label{convUgorro3}
{\hat{u}_{\varepsilon}\over \varepsilon^2}\rightharpoonup \hat{u}\text{\ in \ }H^1(0,1;L^2(\omega \times Y')^3),\quad \hat{P}_{\varepsilon}\rightharpoonup \hat{P}\text{\ in \ }L^{2}_0(\omega \times Y),
\end{equation}
\begin{equation}\label{divmacro_tilde3}
{\rm div}_{x^{\prime}}\left( \int_0^1 \tilde{u}^{\prime}(x^{\prime},y_3)dy_3 \right)=0 \text{\ in \ }\omega,\quad \left( \int_0^1 \tilde{u}^{\prime}(x^{\prime},y_3)dy_3 \right)\cdot n=0\text{\ on \ }\partial \omega,
\end{equation}
\begin{equation}\label{divmacro3}
{\rm div}_{y'}\hat{u}'=0 \text{\ in \ }\omega\times Y,\quad {\rm div}_{x^{\prime}}\left( \int_Y \hat{u}^{\prime}(x^{\prime},y)dy \right)=0 \text{\ in \ }\omega,\quad \left( \int_Y \hat{u}^{\prime}(x^{\prime},y)dy \right)\cdot n=0\text{\ on \ }\partial \omega.
\end{equation}
\end{lemma}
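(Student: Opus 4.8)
The plan is to follow the same scheme used for the critical and subcritical regimes. The convergences (\ref{convtilde3})--(\ref{divmacro3}) concern only the rescaled velocities $\tilde u_\varepsilon/\varepsilon^2$, $\hat u_\varepsilon/\varepsilon^2$ and the unfolded pressure $\hat P_\varepsilon$, and they are obtained, essentially verbatim, from Lemmas 5.2, 5.3 and 5.4 in \cite{Anguiano_SuarezGrau}: one starts from the a priori bounds of Lemmas \ref{Lemma_estimate}-ii) and \ref{estCV}-ii), extracts weakly convergent subsequences of $\tilde u_\varepsilon/\varepsilon^2$, $\hat u_\varepsilon/\varepsilon^2$ and $\hat P_\varepsilon$, and identifies their limits. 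The feature specific to the supercritical regime is that, after division by $\varepsilon^2$, only $\partial_{y_3}$ of $\hat u_\varepsilon$ stays bounded in $L^2$ --- indeed $\|\mathbb{D}_{y'}[\hat u_\varepsilon]\|\le Ca_\varepsilon\varepsilon$ is of larger order than $\varepsilon^2$, whereas $\|\partial_{y_3}[\hat u_\varepsilon]\|\le C\varepsilon^2$ --- which forces $\hat u\in H^1(0,1;L^2_\sharp(\omega\times Y')^3)$; the interface and boundary conditions $\hat u=0$ on $\omega\times Y_s$ and on $y_3\in\{0,1\}$, the averaging identity $\int_Y\hat u\,dy=\int_0^1\tilde u\,dy_3$, the normalisations $\tilde u_3=0$, $\int_Y\hat u_3\,dy=0$, the fact that $\hat u_3$ is $y_3$-independent, and the divergence constraints (\ref{divmacro_tilde3})--(\ref{divmacro3}) are then recovered by passing to the limit in ${\rm div}_\varepsilon\tilde u_\varepsilon=0$ tested against oscillating functions, exactly as in the cited reference.

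The only part requiring a genuinely new argument is that $\hat P$ is independent of $y$, which I would establish in two steps, adapting the proofs of Lemmas \ref{Convergence_critical} and \ref{Convergence_subcritical} to the present case $g(\varepsilon)=g\,\varepsilon$, $a_\varepsilon\gg\varepsilon$. First, to show that $\hat P$ does not depend on $y'$, I would take $\tilde v\in\mathcal{D}(\omega;C^\infty_\sharp(Y)^3)$ with $\tilde v=0$ in $\omega\times Y_s$ and, crucially, with $\tilde v_3\equiv 0$, and plug $a_\varepsilon\tilde v(x',x'/a_\varepsilon,y_3)$ into (\ref{extension_problem}); after unfolding and invoking (\ref{term1_CV}) and Lemma \ref{estCV}-ii) one checks that the cross viscous term, the quadratic term $-2\mu\int|\mathbb{D}_\varepsilon[\tilde u_\varepsilon]|^2$, the two yield-stress terms and the force terms all tend to $0$, while among the pressure terms $a_\varepsilon\int\hat P_\varepsilon\,{\rm div}_{x'}\tilde v'\to 0$, the term $\tfrac{a_\varepsilon}{\varepsilon}\int\hat P_\varepsilon\,\partial_{y_3}\tilde v_3$ is absent because $\tilde v_3\equiv 0$, and $\int\hat P_\varepsilon\,{\rm div}_{y'}\tilde v'\to\int_{\omega\times Y}\hat P\,{\rm div}_{y'}\tilde v'$; passing to the limit gives $0\ge\int_{\omega\times Y}\hat P\,{\rm div}_{y'}\tilde v'$, and testing also with $-a_\varepsilon\tilde v$ gives the opposite inequality, hence $\int_{\omega\times Y}\hat P\,{\rm div}_{y'}\tilde v'=0$, so $\hat P$ is $y'$-independent. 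Second, to show that $\hat P$ does not depend on $y_3$, I would argue exactly as in the subcritical case, using now the test function $\varepsilon\,(0,0,\tilde v_3(x',x'/a_\varepsilon,y_3))$ in (\ref{extension_problem}): with $g(\varepsilon)=g\,\varepsilon$ and $\|\mathbb{D}_\varepsilon[\tilde u_\varepsilon]\|\le C\varepsilon/\mu$, all viscous and yield-stress terms again tend to $0$, the force term is $O(\varepsilon^2)$ (since $f=(f',0)$ and the horizontal part of the test function vanishes), and the only surviving pressure contribution is $\int\tilde P_\varepsilon\,\partial_{y_3}\tilde v_3\to\int_{\omega\times Y}\hat P\,\partial_{y_3}\tilde v_3$, whence (using $\pm\varepsilon\,(0,0,\tilde v_3)$) $\int_{\omega\times Y}\hat P\,\partial_{y_3}\tilde v_3=0$; together with the first step, $\hat P$ is independent of $y$.

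The main obstacle I anticipate is the scale bookkeeping: in the estimates above one has to track simultaneously the three powers $a_\varepsilon\to 0$, $\varepsilon\to 0$ and $a_\varepsilon/\varepsilon\to+\infty$, and the genuinely new difficulty compared with the critical and subcritical regimes is that, in the first step, the pressure term $\tfrac{a_\varepsilon}{\varepsilon}\int\hat P_\varepsilon\,\partial_{y_3}\tilde v_3$ would blow up since $\hat P_\varepsilon$ is only bounded (not small); this is precisely why the test function must be chosen with vanishing vertical component. Once this point is handled, everything else is a routine adaptation of the arguments already written out for the two previous cases and of Lemmas 5.2--5.4 and 4.6 in \cite{Anguiano_SuarezGrau}.
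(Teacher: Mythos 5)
Your proposal is correct and follows essentially the same route as the paper: the convergences (\ref{convtilde3})--(\ref{divmacro3}) are delegated to Lemmas 5.2--5.4 of \cite{Anguiano_SuarezGrau}, and the $y$-independence of $\hat P$ is obtained by testing (\ref{extension_problem}) with $\pm a_\varepsilon(\tilde v',0)$ (horizontal, to kill the otherwise unbounded $\tfrac{a_\varepsilon}{\varepsilon}\int\hat P_\varepsilon\,\partial_{y_3}\tilde v_3$ term) and with $\pm\varepsilon$ times a test function for the $y_3$-direction, exactly as in the paper. The only differences are cosmetic: you treat $y'$ before $y_3$, and in the $y_3$-step you use the purely vertical test function $\varepsilon(0,0,\tilde v_3)$ whereas the paper uses the full $\varepsilon\tilde v$ and lets the extra pressure terms vanish via $\varepsilon\to0$ and $\varepsilon/a_\varepsilon\to0$.
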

\begin{proof}
See Lemmas 5.2, 5.3 and 5.4 in \cite{Anguiano_SuarezGrau} for the proof of (\ref{convtilde3})-(\ref{divmacro3}).
Here, we prove that $\hat P$ does not depend on the microscopic variable $y$. To do this, we choose as test 
function $\tilde v(x^{\prime},y)\in \mathcal{D}(\omega;C_{\sharp}^{\infty}(Y)^3)$ with $\tilde v(x^{\prime},y)
=0\in \omega \times Y_s$ (thus, $\tilde v(x^{\prime},x^{\prime}/a_{\varepsilon},y_3)\in (H_0^{1}(\widetilde{\Omega}_{\varepsilon}))^3$).
In order to prove that $\hat P$ does not depend on $y_3$, we set $\varepsilon\tilde v(x',x'/a_\varepsilon, y_3)$ in
(\ref{extension_problem}) (we recall that $g(\varepsilon)= g\,\varepsilon)$)and using that ${\rm div}_\varepsilon \tilde u_\varepsilon=0$, we have
\begin{eqnarray}\label{problem_pressure3}
&&2\mu \varepsilon\int_{\Omega}\mathbb{D}_\varepsilon \left[\tilde{u}_\varepsilon \right] :\left(\mathbb{D}_{x^\prime}\left[\tilde v\right]+\frac{1}{a_\varepsilon} \mathbb{D}_{y^\prime}\left[\tilde v\right]+\frac{1}{\varepsilon}\partial_{y_3}\left[\tilde v\right]\right) dx^{\prime}dy_3-2\mu\int_{\Omega}|\mathbb{D}_\varepsilon \left[\tilde{u}_\varepsilon \right]|^2dx'dy_3 \\
&&+\sqrt{2}g \varepsilon^2\int_{\Omega}\left|\mathbb{D}_{x^\prime}\left[\tilde v\right]+
\frac{1}{a_\varepsilon} \mathbb{D}_{y^\prime}\left[\tilde v\right]+\frac{1}{\varepsilon}\partial_{y_3}\left[\tilde v\right]\right|dx'dy_3-
\sqrt{2}g \varepsilon \int_{\Omega}|\mathbb{D}_{\varepsilon}[\tilde u_\varepsilon]|dx'dy_3 \nonumber\\
&&\ge \varepsilon \int_{\Omega}f'\cdot\tilde v'\,dx^{\prime}dy_3-\int_{\Omega}f'\cdot\tilde{u}'_{\varepsilon}\,dx^{\prime}dy_3+\varepsilon\int_{\Omega} \tilde{P}_\varepsilon\, {\rm div}_{x^\prime} \tilde v^\prime\,dx^{\prime}dy_3+{\varepsilon \over a_\varepsilon}\int_{\Omega} \tilde{P}_\varepsilon\, {\rm div}_{y^\prime} \tilde v^\prime\,dx^{\prime}dy_3 +\int_{\Omega}\tilde{P}_\varepsilon\,\partial_{y_3}\tilde v_3\,dx^{\prime}dy_3. \nonumber
\end{eqnarray}
Applying the change of variables given in Remark \ref{remarkCV} to relation (\ref{problem_pressure3}) and taking into account (\ref{term1_CV})-(\ref{term2_CV}), we obtain
\begin{eqnarray}\label{problem2_pressure3}
&&2\mu \varepsilon\int_{\omega\times Y}\left(\frac{1}{a_\varepsilon}\mathbb{D}_{y^\prime} \left[\hat{u}_\varepsilon \right] +\frac{1}{\varepsilon}\partial_{y_3}\left[\hat{u}_\varepsilon \right]\right):\left(\frac{1}{a_\varepsilon} \mathbb{D}_{y^\prime}\left[\tilde v\right]+\frac{1}{\varepsilon}\partial_{y_3}\left[\tilde v\right]\right)dx^{\prime}dy \\
&&+\sqrt{2}g \varepsilon^2\int_{\omega\times Y}\left|\mathbb{D}_{x^\prime}\left[\tilde v\right]+\frac{1}{a_\varepsilon} 
\mathbb{D}_{y^\prime}\left[\tilde v\right]+\frac{1}{\varepsilon}\partial_{y_3}\left[\tilde v\right]\right|dx'dy-
\sqrt{2}g \varepsilon \int_{\omega\times Y}\left|\frac{1}{a_\varepsilon} \mathbb{D}_{y^\prime}\left[\hat u_\varepsilon\right]+\frac{1}{\varepsilon}\partial_{y_3}\left[\hat u_\varepsilon\right]\right|dx'dy+O_\varepsilon \nonumber \\
&&\ge \varepsilon \int_{\omega\times Y}f'\cdot \tilde v'\,dx^\prime dy-\int_{\omega\times Y}f'\cdot \hat u_\varepsilon'\,dx^\prime dy+\varepsilon\int_{\omega\times Y} \hat{P}_\varepsilon\, {\rm div}_{x^\prime} \tilde v^\prime\,dx^{\prime}dy+{\varepsilon \over a_\varepsilon}\int_{\omega\times Y} \hat{P}_\varepsilon\, {\rm div}_{y^\prime} \tilde v^\prime\,dx^{\prime}dy \nonumber\\
&&+\int_{\omega\times Y}\hat{P}_\varepsilon\,\partial_{y_3}\tilde v_3\,dx^{\prime}dy+O_\varepsilon.\nonumber
\end{eqnarray}
According with (\ref{convUgorro3}) and using that $a_{\varepsilon}\gg \varepsilon$, one has for the first term in relation (\ref{problem2_pressure3}) 

\begin{equation}\label{limit1_pressure3}
2\mu \varepsilon\int_{\omega\times Y}\left({\varepsilon \over a_\varepsilon}\frac{1}{\varepsilon^2}\mathbb{D}_{y^\prime} \left[\hat{u}_\varepsilon \right] +{1 \over \varepsilon^2}\partial_{y_3}\left[\hat{u}_\varepsilon \right]\right):\left({\varepsilon \over a_\varepsilon}\mathbb{D}_{y^\prime}\left[\tilde v\right]+\partial_{y_3}\left[\tilde v\right]\right)dx^{\prime}dy\to 0,  \text{ as } \varepsilon\to 0.
\end{equation}
We pass to the limit in the first nonlinear term and  we have
\begin{eqnarray}\label{limit2_pressure3}
&&\sqrt{2}g\varepsilon\int_{\omega\times Y}\left|\varepsilon \mathbb{D}_{x^\prime}\left[\tilde v\right]+
{\varepsilon \over a_\varepsilon} \mathbb{D}_{y^\prime}\left[\tilde v\right]+
\partial_{y_3}\left[\tilde v\right]\right|dx'dy \to 0, \text{ as } \varepsilon\to 0.
\end{eqnarray}
In order to pass the limit in the second nonlinear term, we taking into account that
\begin{eqnarray*}
\sqrt{2}g \varepsilon \int_{\omega\times Y}\left|\frac{1}{a_\varepsilon} \mathbb{D}_{y^\prime}\left[\hat u_\varepsilon\right]+
\frac{1}{\varepsilon}\partial_{y_3}\left[\hat u_\varepsilon\right]\right|dx'dy=\sqrt{2}g\varepsilon^2 \int_{\omega\times Y}\left|{\varepsilon \over a_\varepsilon}\frac{1}{\varepsilon^2} \mathbb{D}_{y^\prime}\left[\hat u_\varepsilon\right]+{1\over \varepsilon^2}\partial_{y_3}\left[\hat u_\varepsilon\right]\right|dx'dy,
\end{eqnarray*}
and using (\ref{convUgorro3}), with $a_{\varepsilon}\gg \varepsilon$, and the fact that the function $E(\varphi)=|\varphi|$ is proper convex continuous, we can deduce that
\begin{eqnarray}\label{limit3_pressure3}
\liminf_{\varepsilon \to 0}\sqrt{2}g \varepsilon \int_{\omega\times Y}\left|\frac{1}{a_\varepsilon} \mathbb{D}_{y^\prime}\left[\hat u_\varepsilon\right]+\frac{1}{\varepsilon}\partial_{y_3}\left[\hat u_\varepsilon\right]\right|dx'dy\ge0.
\end{eqnarray}
Moreover, using (\ref{convUgorro3}) the two first terms in the right hand side of (\ref{problem2_pressure3}) can be written by 
\begin{equation}\label{limit4_pressure3}
\varepsilon\int_{\omega\times Y}f'\cdot \tilde v'\,dx'dy-\varepsilon^2\int_{\omega\times Y}f'\cdot{\hat u_\varepsilon'\over \varepsilon^2}\,dx^\prime dy\to 0, \text{ as } \varepsilon\to 0.
\end{equation}
We consider now the terms which involve the pressure. Taking into account the convergence of the pressure (\ref{convUgorro3}) and $a_{\varepsilon}\gg \varepsilon$, passing to the limit when $\varepsilon$ tends to zero, we have
\begin{equation}\label{limit5_pressure3}
\int_{\omega\times Y} \hat{P}\, \partial_{y_3} \tilde v_3\,dx^{\prime}dy.
\end{equation}
Therefore, taking into account (\ref{limit1_pressure3})-(\ref{limit5_pressure3}), when we pass to the limit in (\ref{problem2_pressure3}) when $\varepsilon$ tends to zero, we have
$
0\ge\int_{\omega\times Y} \hat{P}\, \partial_{y_3} \tilde v_3\,dx^{\prime}dy.
$
Now, if we choose as test function $-\varepsilon\tilde v(x',x'/a_\varepsilon, y_3)$ in (\ref{extension_problem}) and we argue similarly, we can deduce that $\hat P$ does not depend on $y_3$.

Now, in order to prove that $\hat P$ does not depend on $y'$, we set $a_\varepsilon \tilde v=a_\varepsilon(\tilde v'(x',x'/a_\varepsilon, y_3),0)$ in (\ref{extension_problem}) and using that ${\rm div}_\varepsilon \tilde u_\varepsilon=0$, we have
\begin{eqnarray}\label{problem_pressure3_1}
&&2\mu a_\varepsilon\int_{\Omega}\mathbb{D}_\varepsilon \left[\tilde{u}_\varepsilon \right] :\left(\mathbb{D}_{x^\prime}\left[\tilde v\right]+\frac{1}{a_\varepsilon} \mathbb{D}_{y^\prime}\left[\tilde v\right]+\frac{1}{\varepsilon}\partial_{y_3}\left[\tilde v\right]\right) dx^{\prime}dy_3-2\mu\int_{\Omega}|\mathbb{D}_\varepsilon \left[\tilde{u}_\varepsilon \right]|^2dx'dy_3 \\
&&+\sqrt{2}g\varepsilon \, a_\varepsilon \int_{\Omega}\left|\mathbb{D}_{x^\prime}\left[\tilde v\right]+\frac{1}{a_\varepsilon} 
\mathbb{D}_{y^\prime}\left[\tilde v\right]+\frac{1}{\varepsilon}\partial_{y_3}\left[\tilde v\right]\right|dx'dy_3-
\sqrt{2}g \varepsilon \int_{\Omega}|\mathbb{D}_{\varepsilon}[\tilde u_\varepsilon]|dx'dy_3 \nonumber\\
&&\ge a_\varepsilon \int_{\Omega}f'\cdot\tilde v'\,dx^{\prime}dy_3-\int_{\Omega}f'\cdot\tilde{u}'_{\varepsilon}\,dx^{\prime}dy_3+a_\varepsilon\int_{\Omega} \tilde{P}_\varepsilon\, {\rm div}_{x^\prime} \tilde v^\prime\,dx^{\prime}dy_3+\int_{\Omega} \tilde{P}_\varepsilon\, {\rm div}_{y^\prime} \tilde v^\prime\,dx^{\prime}dy_3. \nonumber
\end{eqnarray}
Applying the change of variables given in Remark \ref{remarkCV} to relation (\ref{problem_pressure3_1}) and taking into account (\ref{term1_CV})-(\ref{term2_CV}), we obtain
\begin{eqnarray}\label{problem2_pressure3_1}
&&2\mu a_\varepsilon\int_{\omega\times Y}\left(\frac{1}{a_\varepsilon}\mathbb{D}_{y^\prime} \left[\hat{u}_\varepsilon \right] +\frac{1}{\varepsilon}\partial_{y_3}\left[\hat{u}_\varepsilon \right]\right):\left(\frac{1}{a_\varepsilon} \mathbb{D}_{y^\prime}\left[\tilde v\right]+\frac{1}{\varepsilon}\partial_{y_3}\left[\tilde v\right]\right)dx^{\prime}dy \\
&&+\sqrt{2}g\varepsilon\, a_\varepsilon\int_{\omega\times Y}\left|\mathbb{D}_{x^\prime}\left[\tilde v\right]+
\frac{1}{a_\varepsilon} \mathbb{D}_{y^\prime}\left[\tilde v\right]+\frac{1}{\varepsilon}\partial_{y_3}\left[\tilde v\right]\right|dx'dy-
\sqrt{2}g \varepsilon \int_{\omega\times Y}\left|\frac{1}{a_\varepsilon} \mathbb{D}_{y^\prime}\left[\hat u_\varepsilon\right]+\frac{1}{\varepsilon}\partial_{y_3}\left[\hat u_\varepsilon\right]\right|dx'dy+O_\varepsilon \nonumber \\
&&\ge a_\varepsilon \int_{\omega\times Y}f'\cdot \tilde v'\,dx^\prime dy-\int_{\omega\times Y}f'\cdot \hat u_\varepsilon'\,dx^\prime dy+a_\varepsilon\int_{\omega\times Y} \hat{P}_\varepsilon\, {\rm div}_{x^\prime} \tilde v^\prime\,dx^{\prime}dy+\int_{\omega\times Y} \hat{P}_\varepsilon\, {\rm div}_{y^\prime} \tilde v^\prime\,dx^{\prime}dy. \nonumber
\end{eqnarray}
According with (\ref{convUgorro3}) and using that $a_{\varepsilon}\gg \varepsilon$, the first term in relation (\ref{problem2_pressure3_1}) can be written by the following way
\begin{equation}\label{limit1_pressure3_1}
2\mu a_\varepsilon\int_{\omega\times Y}\left({\varepsilon \over a_\varepsilon}\frac{1}{\varepsilon^2}\mathbb{D}_{y^\prime} \left[\hat{u}_\varepsilon \right] +{1 \over \varepsilon^2}\partial_{y_3}\left[\hat{u}_\varepsilon \right]\right):\left({\varepsilon \over a_\varepsilon}\mathbb{D}_{y^\prime}\left[\tilde v\right]+\partial_{y_3}\left[\tilde v\right]\right)dx^{\prime}dy\to 0,  \text{ as } \varepsilon\to 0.
\end{equation}
In order to pass to the limit in the first nonlinear term, we have
\begin{eqnarray}\label{limit2_pressure3_1}
&&\sqrt{2}g a_\varepsilon\int_{\omega\times Y}\left|\varepsilon\mathbb{D}_{x^\prime}\left[\tilde v\right]+{\varepsilon \over a_\varepsilon} \mathbb{D}_{y^\prime}\left[\tilde v\right]+\partial_{y_3}\left[\tilde v\right]\right|dx'dy \to 0, \text{ as } \varepsilon\to 0.
\end{eqnarray}
Moreover, using (\ref{convUgorro3}) the two first terms in the right hand side of (\ref{problem2_pressure3_1}) can be written by 
\begin{equation}\label{limit4_pressure3_1}
a_\varepsilon\int_{\omega\times Y}f'\cdot \tilde v'\,dx'dy-\varepsilon^2\int_{\omega\times Y}f'\cdot{\hat u_\varepsilon'\over \varepsilon^2}\,dx^\prime dy\to 0, \text{ as } \varepsilon\to 0.
\end{equation}
We consider now the terms which involve the pressure. Taking into account the convergence of the pressure (\ref{convUgorro3}), passing to the limit when $\varepsilon$ tends to zero, we have
\begin{equation}\label{limit5_pressure3_1}
\int_{\omega\times Y} \hat{P}\, {\rm div}_{y'} \tilde v'\,dx^{\prime}dy.
\end{equation}
Therefore, taking into account (\ref{limit3_pressure3}) and (\ref{limit1_pressure3_1})-(\ref{limit5_pressure3_1}), when we pass to the limit in (\ref{problem2_pressure3_1}) when $\varepsilon$ tends to zero, we have
$
0\ge\int_{\omega\times Y} \hat{P}\, {\rm div}_{y'} \tilde v'\,dx^{\prime}dy.
$
Now, if we choose as test function $-a_\varepsilon \tilde v=-a_\varepsilon(\tilde v'(x',x'/a_\varepsilon, y_3),0)$ in (\ref{extension_problem}) and we argue similarly, we can deduce that $\hat P$ does not depend on $y'$, so $\hat P$ does not depend on $y$.
\end{proof}

\begin{theorem}[Supercritical case]\label{SupercriticalCase}
If $a_{\varepsilon}\gg \varepsilon$, then $(\hat u_\varepsilon/\varepsilon^2,\hat
P_\varepsilon)$ converges to $(\hat u,\hat P)$ in $H^1(0,1;L^2(\omega \times Y')^3)\times L^{2}_0(\omega \times Y)$, 
which satisfies the following variational equality 
\begin{eqnarray}\label{limit_supercritical}
&&2\mu\int_{\omega\times Y}\partial_{y_3} \left[\hat{u}' \right] :\left(\partial_{y_3} \left[\tilde {v}'\right] -
\partial_{y_3} \left[\hat {u}'\right] \right)dx^{\prime}dy+ \sqrt{2}g \int_{\omega\times Y}\left| 
\partial_{y_3}\left[\tilde {v}'\right]\right|dx'dy
- \sqrt{2}g\int_{\omega\times Y}\left| \partial_{y_3} \left[\hat {u}'\right]\right|dx'dy \nonumber\\
&&\ge\int_{\omega\times Y}f'\cdot \left(\tilde v' - \hat u' \right)\,dx^\prime dy-\int_{\omega\times Y}\nabla_{x'}\hat P\,
\left( \tilde v' - \hat u' \right) \,dx^\prime dy,
\end{eqnarray}
for every $\tilde v\in H^1(0,1;L^2(\omega\times Y')^3)$ such that 
$$\tilde v(x',y)=0 \text{ in } \omega\times Y_s,\quad {\rm div}_{y'}\tilde v'=0 \text{ in }\omega\times Y,\quad
 \left( \int_Y \tilde v^{\prime}(x^{\prime},y)dy \right)\cdot n=0\text{\ on \ }\partial \omega.$$
\end{theorem}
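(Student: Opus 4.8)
The strategy is to pass to the limit $\varepsilon\to 0$ in the extended variational inequality \eqref{extension_problem}, following the pattern of the proofs of Theorems \ref{CriticalCase} and \ref{SubCriticalCase}, but with the rescalings governed by $\varepsilon$ instead of $a_\varepsilon$, since in the supercritical regime one has $\|\mathbb{D}_\varepsilon[\tilde u_\varepsilon]\|\le C\varepsilon$ by \eqref{a2}. First I would take a test function $\tilde v(x',y)\in\mathcal{D}(\omega;C^\infty_\sharp(Y)^3)$ with $\tilde v=0$ in $\omega\times Y_s$, with $\tilde v_3$ independent of $y_3$, and satisfying the incompressibility constraints from \eqref{divmacro3}, namely ${\rm div}_{y'}\tilde v'=0$ in $\omega\times Y$ and $\left(\int_Y\tilde v'(x',y)\,dy\right)\cdot n=0$ on $\partial\omega$, so that $\tilde v(x',x'/a_\varepsilon,y_3)\in(H_0^1(\widetilde\Omega_\varepsilon))^3$. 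Multiplying \eqref{extension_problem} by $\varepsilon^{-2}$, using ${\rm div}_\varepsilon\tilde u_\varepsilon=0$, taking $\varepsilon^2\tilde v(x',x'/a_\varepsilon,y_3)$ as test field and recalling $g(\varepsilon)=g\varepsilon$, one obtains an inequality which, after the change of variables of Remark \ref{remarkCV} together with the identities \eqref{term1_CV}, \eqref{term2_CV} and \eqref{term3_CV}, is written entirely over $\omega\times Y$ in terms of $\hat u_\varepsilon$ and $\hat P_\varepsilon$, up to remainders $O_\varepsilon$ controlled by Lemma \ref{estCV}.

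The heart of the matter is to identify the weak limit of the rescaled strain. Pulling one factor $\varepsilon$ out of the $\hat u_\varepsilon$-side and one out of the $\tilde v$-side, the viscous bilinear term becomes $2\mu\int_{\omega\times Y}\mathcal{E}_\varepsilon:\left(\varepsilon\,\mathbb{D}_{x'}[\tilde v]+\frac{\varepsilon}{a_\varepsilon}\mathbb{D}_{y'}[\tilde v]+\partial_{y_3}[\tilde v]\right)\,dx'dy+O_\varepsilon$, where $\mathcal{E}_\varepsilon:=\frac{\varepsilon}{a_\varepsilon}\mathbb{D}_{y'}[\hat u_\varepsilon/\varepsilon^2]+\partial_{y_3}[\hat u_\varepsilon/\varepsilon^2]$, while the quadratic and the two Bingham terms involve $|\mathcal{E}_\varepsilon|^2$ and $|\mathcal{E}_\varepsilon|$. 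By \eqref{convUgorro3}, $\partial_{y_3}[\hat u_\varepsilon/\varepsilon^2]\rightharpoonup\partial_{y_3}[\hat u]$ in $L^2(\omega\times Y)$; by Lemma \ref{estCV}-ii), $\frac{\varepsilon}{a_\varepsilon}\mathbb{D}_{y'}[\hat u_\varepsilon/\varepsilon^2]$ is bounded in $L^2(\omega\times Y)$, and since $\mathbb{D}_{y'}[\hat u_\varepsilon/\varepsilon^2]\to\mathbb{D}_{y'}[\hat u]$ in $\mathcal{D}'(\omega\times Y)$ while $\varepsilon/a_\varepsilon\to 0$, it converges weakly to $0$ in $L^2(\omega\times Y)$ (equivalently, one integrates $\nabla_{y'}$ by parts onto the smooth test field and uses $\|\hat u_\varepsilon/\varepsilon^2\|_{L^2}\le C$). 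Hence $\mathcal{E}_\varepsilon\rightharpoonup\partial_{y_3}[\hat u]$ weakly in $L^2$, whereas the test combination converges strongly to $\partial_{y_3}[\tilde v]$; moreover $\hat u_3=0$ since $\hat u_3$ is independent of $y_3$ and vanishes on $y_3=0$, so $\partial_{y_3}[\hat u]=\partial_{y_3}[\hat u']$.

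With this in hand the passage to the limit is carried out term by term. The bilinear term converges, as a weak-times-strong product, to $2\mu\int_{\omega\times Y}\partial_{y_3}[\hat u']:\partial_{y_3}[\tilde v']\,dx'dy$; the quadratic term $-2\mu\int_{\omega\times Y}|\mathcal{E}_\varepsilon|^2$ satisfies, by weak lower semicontinuity of the $L^2$-norm, $\limsup\le-2\mu\int_{\omega\times Y}|\partial_{y_3}[\hat u']|^2\,dx'dy$; the first Bingham term $\sqrt{2}\,g\,\varepsilon\int_\Omega|\mathbb{D}_\varepsilon[\tilde v_\varepsilon]|$ converges, by strong convergence of its argument, to $\sqrt{2}\,g\int_{\omega\times Y}|\partial_{y_3}[\tilde v']|\,dx'dy$; the second one $-\sqrt{2}\,g\int_{\omega\times Y}|\mathcal{E}_\varepsilon|$ satisfies, since $\varphi\mapsto|\varphi|$ is proper convex continuous, $\limsup\le-\sqrt{2}\,g\int_{\omega\times Y}|\partial_{y_3}[\hat u']|\,dx'dy$; the force terms converge to $\int_{\omega\times Y}f'\cdot(\tilde v'-\hat u')\,dx'dy$; and, since ${\rm div}_{y'}\tilde v'=0$ and $\tilde v_3$ independent of $y_3$ give ${\rm div}_\varepsilon\tilde v_\varepsilon={\rm div}_{x'}\tilde v'$, the pressure term converges, using \eqref{divmacro3} and the fact that $\hat P$ is independent of $y$ (Lemma \ref{Convergence_supercritical}), to $-\int_{\omega\times Y}\nabla_{x'}\hat P\cdot(\tilde v'-\hat u')\,dx'dy$ exactly as in \eqref{limit6}. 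Taking $\limsup$ in the left-hand side and $\lim$ in the right-hand side of the rescaled inequality, and rewriting $2\mu\int\partial_{y_3}[\hat u']:\partial_{y_3}[\tilde v']-2\mu\int|\partial_{y_3}[\hat u']|^2=2\mu\int\partial_{y_3}[\hat u']:(\partial_{y_3}[\tilde v']-\partial_{y_3}[\hat u'])$, one obtains \eqref{limit_supercritical} for smooth $\tilde v$; a standard density argument then extends it to every admissible $\tilde v\in H^1(0,1;L^2(\omega\times Y')^3)$.

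I expect the only genuinely delicate point to be the one isolated in the second paragraph. Since only $\|\mathbb{D}_{y'}[\hat u_\varepsilon]\|\le Ca_\varepsilon\varepsilon$ is available, the lateral strain $\mathbb{D}_{y'}[\hat u_\varepsilon/\varepsilon^2]$ itself diverges at rate $a_\varepsilon/\varepsilon$, and only its product by the prefactor $\varepsilon/a_\varepsilon$ remains bounded; showing that this bounded quantity has weak limit exactly $0$ — so that no spurious lateral term is carried into the homogenized inequality — is the core of the argument. Everything else reduces to careful bookkeeping of the powers of $\varepsilon$ and $a_\varepsilon$ and to the by-now standard convexity and lower-semicontinuity passage for the Bingham functional.
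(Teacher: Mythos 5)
Your proposal is correct and follows essentially the same route as the paper's proof: multiply \eqref{extension_problem} by $\varepsilon^{-2}$, test with $\varepsilon^2\tilde v(x',x'/a_\varepsilon,y_3)$ (with $\tilde v_3$ independent of $y_3$ and the constraints of \eqref{divmacro3}), unfold via Remark \ref{remarkCV}, and pass to the limit term by term using \eqref{convUgorro3}, convexity/lower semicontinuity for the quadratic and Bingham terms, and the pressure identities \eqref{limit6} and \eqref{limit7_case2}. Your explicit justification that $\frac{\varepsilon}{a_\varepsilon}\mathbb{D}_{y'}[\hat u_\varepsilon/\varepsilon^2]\rightharpoonup 0$ (bounded in $L^2$ plus distributional convergence) merely spells out what the paper invokes implicitly through $a_\varepsilon\gg\varepsilon$, so there is no substantive difference.
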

\begin{proof}
We choose a test function $\tilde v(x^{\prime},y)\in \mathcal{D}(\omega;C_{\sharp}^{\infty}(Y)^3)$ with $\tilde v(x^{\prime},y)=0\in \omega \times Y_s$ (thus, $\tilde v(x^{\prime},x^{\prime}/a_{\varepsilon},y_3)\in (H_0^{1}(\widetilde{\Omega}_{\varepsilon}))^3$). 
We first multiply (\ref{extension_problem}) by $\varepsilon^{-2}$ and we use that ${\rm div}_\varepsilon \tilde u_\varepsilon=0$. Then, we take a test function $\varepsilon^{2} \tilde v(x',x'/a_\varepsilon, y_3)$, with $\tilde v_3$ independent of $y_3$ and with $\tilde v(x',y)=0$ in $\omega\times Y_s$ and satisfying the incompressibility  conditions (\ref{divmacro3}), that is, ${\rm div}_{y'}\tilde v'=0$ in $\omega\times Y$ and $\left( \int_Y \tilde v^{\prime}(x^{\prime},y)dy \right)\cdot n=0$ on $\partial \omega$, and we have
\begin{eqnarray}\label{extension_problem2_supercritical}
&&2\mu\int_{\Omega}\mathbb{D}_\varepsilon \left[\tilde{u}_\varepsilon \right] :\left(\mathbb{D}_{x^\prime}\left[\tilde v\right]+\frac{1}{a_\varepsilon} \mathbb{D}_{y^\prime}\left[\tilde v\right]+\frac{1}{\varepsilon}\partial_{y_3}\left[\tilde v\right]\right) dx^{\prime}dy_3-2\mu {1\over \varepsilon^2}\int_{\Omega}|\mathbb{D}_\varepsilon \left[\tilde{u}_\varepsilon \right]|^2dx'dy_3 \\
&&+\sqrt{2}g \varepsilon \int_{\Omega}\left|\mathbb{D}_{x^\prime}\left[\tilde v\right]+
\frac{1}{a_\varepsilon} \mathbb{D}_{y^\prime}\left[\tilde v\right]+\frac{1}{\varepsilon}\partial_{y_3}\left[\tilde v\right]\right|dx'dy_3-
\sqrt{2}g{1\over \varepsilon}\int_{\Omega}|\mathbb{D}_{\varepsilon}[\tilde u_\varepsilon]|dx'dy_3 \nonumber\\
&&\ge\int_{\Omega}f'\cdot\tilde v'\,dx^{\prime}dy_3-{1\over \varepsilon^2}\int_{\Omega}f'\cdot\tilde{u}'_{\varepsilon}\,dx^{\prime}dy_3+\int_{\Omega} \tilde{P}_\varepsilon\, {\rm div}_{x^\prime} \tilde v^\prime\,dx^{\prime}dy_3+\frac{1}{a_\varepsilon}\int_{\Omega} \tilde{P}_\varepsilon\, {\rm div}_{y^\prime} \tilde v^\prime\,dx^{\prime}dy_3. \nonumber
\end{eqnarray}
Applying the change of variables given in Remark \ref{remarkCV} to relation (\ref{extension_problem2_supercritical}), arguing as in the critical case, we obtain
\begin{eqnarray}\label{formvarcvuprime3}
&&2\mu\int_{\omega\times Y}\left(\frac{1}{a_\varepsilon}\mathbb{D}_{y^\prime} \left[\hat{u}_\varepsilon \right] +
\frac{1}{\varepsilon}\partial_{y_3}\left[\hat{u}_\varepsilon \right]\right):\left(\frac{1}{a_\varepsilon} 
\mathbb{D}_{y^\prime}\left[\tilde v\right]+\frac{1}{\varepsilon}\partial_{y_3}\left[\tilde v\right]\right)dx^{\prime}dy \\
&&-2\mu {1\over \varepsilon^2}\int_{\omega\times Y}\left|\frac{1}{a_\varepsilon} \mathbb{D}_{y^\prime}\left[\hat u_\varepsilon\right]+\frac{1}{\varepsilon}
\partial_{y_3}\left[\hat u_\varepsilon\right]\right|^2dx'dy+\sqrt{2}g \varepsilon \int_{\omega\times Y}\left|\mathbb{D}_{x^\prime}\left[\tilde v\right]+\frac{1}{a_\varepsilon} \mathbb{D}_{y^\prime}\left[\tilde v\right]+\frac{1}{\varepsilon}\partial_{y_3}\left[\tilde v\right]\right|dx'dy \nonumber\\
&&-\sqrt{2}g{1\over \varepsilon}\int_{\omega\times Y}\left|\frac{1}{a_\varepsilon} \mathbb{D}_{y^\prime}\left[\hat u_\varepsilon\right]+\frac{1}{\varepsilon}\partial_{y_3}\left[\hat u_\varepsilon\right]\right|dx'dy+O_\varepsilon \nonumber \\
&&\ge\int_{\omega\times Y}f'\cdot \tilde v'\,dx^\prime dy-{1\over \varepsilon^2}\int_{\omega\times Y}f'\cdot \hat u_\varepsilon'\,dx^\prime dy+\int_{\omega\times Y} \hat{P}_\varepsilon\, {\rm div}_{x^\prime} \tilde v^\prime\,dx^{\prime}dy+\frac{1}{a_\varepsilon}\int_{\omega\times Y} \hat{P}_\varepsilon\, {\rm div}_{y^\prime} \tilde v^\prime\,dx^{\prime}dy+O_\varepsilon. \nonumber
\end{eqnarray}
%%%%%%%%%%%%%%%%%%%%
According with (\ref{convUgorro3}), the first term in relation (\ref{formvarcvuprime3}) can be written by the following way
\begin{equation*}
2\mu\int_{\omega\times Y}\left({\varepsilon \over a_\varepsilon}\frac{1}{\varepsilon^2}\mathbb{D}_{y^\prime} \left[\hat{u}_\varepsilon \right] +\frac{1}{\varepsilon^2}\partial_{y_3}\left[\hat{u}_\varepsilon \right]\right):\left({\varepsilon \over a_\varepsilon}\mathbb{D}_{y^\prime}\left[\tilde v\right]+\partial_{y_3}\left[\tilde v\right]\right)dx^{\prime}dy,
\end{equation*}
and, taking into account that $a_\varepsilon\gg \varepsilon$, this term tends to the following limit
\begin{equation}\label{limit1_case3}
2\mu\int_{\omega\times Y}\partial_{y_3}\left[\hat{u}' \right]:\partial_{y_3}\left[\tilde v'\right]dx^{\prime}dy.
\end{equation}
The second term in relation (\ref{formvarcvuprime3}) writes
\begin{equation*}
2\mu\int_{\omega\times Y}\left({\varepsilon \over a_\varepsilon}{1 \over \varepsilon^2}\mathbb{D}_{y^\prime} \left[\hat{u}_\varepsilon \right] +
{1 \over \varepsilon^2}\partial_{y_3}\left[\hat{u}_\varepsilon \right]\right):
 \left({\varepsilon \over a_\varepsilon}{1 \over \varepsilon^2}\mathbb{D}_{y^\prime} \left[\hat{u}_\varepsilon \right] +
{1 \over \varepsilon^2}\partial_{y_3}\left[\hat{u}_\varepsilon \right]\right) dx^{\prime}dy,
\end{equation*}
and, taking into account that the function $B(\varphi) =| \varphi |$ is proper convex continuous and $a_\varepsilon\gg \varepsilon$, we get that the $\liminf_{\varepsilon \to 0}$ of  this second is greater or equal than 
\begin{equation}\label{limit2_case3}
 2\mu\int_{\omega\times Y}\partial_{y_3} \left[\hat{u}' \right] :\partial_{y_3}\left[\hat{u}' \right]dx^{\prime}dy.
\end{equation}
In order to pass to the limit in the first nonlinear term, using that $a_\varepsilon\gg \varepsilon$, we have
\begin{eqnarray*}
&&\left|\sqrt{2}g \varepsilon \int_{\omega\times Y}\left|\mathbb{D}_{x^\prime}\left[\tilde v\right]+
\frac{1}{a_\varepsilon} \mathbb{D}_{y^\prime}\left[\tilde v\right]+\frac{1}{\varepsilon}\partial_{y_3}
\left[\tilde v\right]\right|dx'dy-\sqrt{2}g \int_{\omega\times Y}\left| 
\partial_{y_3}\left[\tilde {v}'\right]\right|dx'dy\right|\\
&&\leq \sqrt{2}g\int_{\omega\times Y}\left|\varepsilon  \mathbb{D}_{x^\prime}
\left[\tilde v\right]+ \frac{\varepsilon}{a_\varepsilon}\mathbb{D}_{y^\prime}\left[\tilde v\right]+
\partial_{y_3}\left[\tilde v\right]-\partial_{y_3}\left[\tilde v\right]\right|dx'dy\\
&&\leq \sqrt{2}g\int_{\omega\times Y}\left|\varepsilon  \mathbb{D}_{x^\prime}\left[\tilde v\right] \right|dx'dy+
\sqrt{2}g\frac{\varepsilon}{a_\varepsilon}\int_{\omega\times Y}\left| \mathbb{D}_{y^\prime}\left[\tilde v\right]\right|dx'dy \to 0, \text{ as } \varepsilon\to 0.
\end{eqnarray*}
Now, in order to pass the limit in the second nonlinear term, taking into account that
\begin{eqnarray*}
\sqrt{2}g{1 \over \varepsilon}\int_{\omega\times Y}\left|\frac{1}{a_\varepsilon} \mathbb{D}_{y^\prime}\left[\hat u_\varepsilon\right]+
\frac{1}{\varepsilon}\partial_{y_3}\left[\hat u_\varepsilon\right]\right|dx'dy=\sqrt{2}g \int_{\omega\times Y}\left|{\varepsilon \over a_\varepsilon}{1 \over \varepsilon^2} \mathbb{D}_{y^\prime}\left[\hat u_\varepsilon\right]+\frac{1}{\varepsilon^2}\partial_{y_3}\left[\hat u_\varepsilon\right]\right|dx'dy,
\end{eqnarray*}
and using (\ref{convUgorro3}) and the fact that the function $E(\varphi)=|\varphi|$ is proper convex continuous and 
$a_\varepsilon\gg \varepsilon$, we can deduce that
\begin{eqnarray}\label{limit4_case3}
\liminf_{\varepsilon \to 0}\sqrt{2}g{1 \over \varepsilon}\int_{\omega\times Y}\left|\frac{1}{a_\varepsilon} \mathbb{D}_{y^\prime}\left[\hat u_\varepsilon\right]+\frac{1}{\varepsilon}\partial_{y_3}\left[\hat u_\varepsilon\right]\right|dx'dy\ge \sqrt{2}g\int_{\omega\times Y}\left|\partial_{y_3}\left[\hat u\right]\right|dx'dy.
\end{eqnarray}
Moreover, using (\ref{convUgorro3}) the two first terms in the right hand side of (\ref{formvarcvuprime3}) tend to the following limit
\begin{equation}\label{limit5_case3}
\int_{\omega\times Y}f'\cdot (\tilde v'-\hat u')\,dx^\prime dy.
\end{equation}
We consider now the terms which involve the pressure. Taking into account the convergence of the pressure (\ref{convUgorro3}) the first term of the pressure tends to the following limit
$
\int_{\omega\times Y} \hat{P}\, {\rm div}_{x^\prime} \tilde v^\prime\,dx^{\prime}dy,
$
and using (\ref{divmacro3}) and taking into account that $\hat P$ does not depend on $y$, we have (\ref{limit6}). Finally using that ${\rm div}_{y'}\tilde v'=0$, we have (\ref{limit7_case2}).
Therefore, taking into account (\ref{limit6}), (\ref{limit7_case2}) and (\ref{limit1_case3})-(\ref{limit5_case3}), we get
 (\ref{limit_supercritical}).
\end{proof}

\section{Conclusions}

By using dimension reduction and homogenization techniques, we studied the limiting 
behavior of the velocity and of the pressure for a nonlinear viscoplastic Bingham flow with small
yield stress, in a thin porous medium of small height $\varepsilon$  and for which the relative
dimension of the pores is $a_\varepsilon$. Three cases are studied following the value of $\lambda=
\lim_{\varepsilon \rightarrow 0} {a_\varepsilon}/{\varepsilon}$   and, at the limit,  they all 
preserve the nonlinear character of the flow. More precisely, according to \cite{Lions_SanchezPalencia}, 
each of the limit problems \eqref{limit_critical}, \eqref {limit_subcritical} and \eqref{limit_supercritical},
   is written as  a nonlinear Darcy equation:
\begin{equation}\label{Darcy}
\left\{
\begin{array}{ll}
\tilde{U'}(x')=K^{\lambda}\left(f'(x')-\nabla_{x'}\hat{P}(x')\right) & \textrm{ in }  \omega,\\
{\textrm{div}}_{x'}\tilde{U'}(x')=0 &  \textrm{ in } \omega, \\ 
\tilde{U'}(x')\cdot n=0 &  \textrm{ on }  \partial \omega.\\
\end{array}
\right.
\end{equation}
The velocity of filtration $\tilde{U}(x')= \left(\tilde{U'}(x'), \tilde{U}_3 (x') \right)  $ is defined by
\begin{equation*}
\tilde{U}(x')=\int \limits_{Y}\hat{u}(x',y)dy=\int \limits_{0}^{1} \left( \int \limits_{Y'}\hat{u}(x',y', y_3)dy' \right) dy_{3}=
\int \limits_{0}^{1} \tilde{u}(x',y_3) dy_{3}.
\end{equation*}

We remark that in all three cases, the vertical  component $\tilde{U}_3$ of the velocity of filtration equals
zero and this result is in accordance with the previous mathematical studies  of the flow  in this
thin porous medium,  for newtonian fluids (Stokes and  Navier-Stokes equations)
and for power law fluids (see \cite{Fabricius}, \cite{Anguiano}, \cite{Anguiano2}, \cite{Anguiano_SuarezGrau}, 
\cite{Anguiano_SuarezGrau2}). Moreover, despite the fact that the limit pressure is not unique, the velocity 
of filtration   is uniquely determined (see Section 4.3 in \cite{Lions_SanchezPalencia}).
In \eqref{Darcy}, the function $K^{\lambda}:{\mathbb R}^2 \longrightarrow {\mathbb R}^2$ is  nonlinear 
and its expression can not be made explicit for the Bingham flow (see \cite{Lions_SanchezPalencia}).
Nevertheless, in each case, for a given $\xi \in {\mathbb R}^2$, one has
$K^{\lambda}(\xi)=\int \limits_{Y}\chi_{\lambda}^{\xi}(y)dy$,
with $\chi_{\lambda}^{\xi}$ solution of a local problem stated in the cell $Y$. If $0<\lambda<+\infty$, the local problem is 
a 3-D Bingham problem. If $\lambda=0$,  the local problem is  a 2-D
Bingham problem (defined for each  $y_3 \in]0,1[)$, while if $\lambda = +\infty$ the 1-D local problem 
(defined for each $y' \in Y'$)
corresponds to a lower-dimensional Bingham-like law (see \cite{Bunoiu_Kesavan}).

We end with the remark that if in the initial problem \eqref{VA_pressure} we take $g=0$, then the  problem under study 
becomes the Stokes problem. We refer to \cite{Anguiano_SuarezGrau} (case $p=2$) for the asymptotic analysis of the Stokes problem.  
If  we set  $g=0$  in the limit problems \eqref{limit_critical}, \eqref {limit_subcritical} and \eqref{limit_supercritical},
they  become exactly  the ones in  \cite{Anguiano_SuarezGrau},  Theorem 6.1 (case $p=2$), corresponding to the Stokes case. \\

{\bf Acknowledgments:}
Mar\'ia Anguiano has been supported by Junta de Andaluc\'ia (Spain), Proyecto de Excelencia P12-FQM-2466.

\end{document}